\definecolor{light}{gray}{.9}
\newcommand{\mb}[1]{\mathbb{#1}}
\newcommand{\mc}[1]{\mathcal{#1}}
\newcommand{\mbf}[1]{\mathbf{#1}}
\newcommand{\im}{\text{i}}
\newcommand{\lp}{\langle}
\newcommand{\rp}{\rangle}
\newcommand{\ra}{\rightarrow}
\newcommand{\ve}{\varepsilon}
\newcommand{\vp}{\varphi}
\DeclareMathOperator{\spec}{Spec}
\def\variance{{\rm Var}}
\def\ent{{\rm Ent\,}}
\DeclareMathOperator{\supp}{supp}
\DeclareMathOperator{\tr}{Tr}
\DeclareMathOperator{\hess}{Hess}
\DeclareMathOperator{\id}{Id}
\DeclareMathOperator{\range}{Ran}
\def\Hess{{\rm Hess\,}}
\def\supp{\mathop{\rm supp} \nolimits} 
\def\qed{\hfill\hbox {\hskip 1pt \vrule width 4pt height 6pt depth 1.5pt
\hskip 1pt}\\} 
\def\and {{\rm \; and \;}}
\def \tr{{\rm \, Tr\;}}
\def\dim {{\rm \; dim  \;}}
\newtheorem{theorem}{Theorem}[section]
\newtheorem{lemma}[theorem]{Lemma}
\newtheorem{proposition}[theorem]{Proposition}
\newtheorem{definition}[theorem]{Definition}
\newtheorem{remark}[theorem]{Remark}
\newtheorem{corollary}[theorem]{Corollary}
\newtheorem{theorem*}{Theorem}
\begin{document}

\title{Small noise spectral gap asymptotics for a large system of nonlinear diffusions}

\author{Giacomo Di Ges\`{u}\thanks{CERMICS -- \'Ecole des Ponts ParisTech,
6-8 avenue Blaise Pascal, F-77455 Marne-la-Vall\'ee Cedex 2.  (giacomo.di-gesu@enpc.fr)}\ ,\  
Dorian~Le~Peutrec\thanks{D\'epartement de Math\'ematiques,
UMR-CNRS 8628, B\^at. 425, Universit\'e Paris~Sud, F-91405 Orsay
Cedex. 
 (dorian.lepeutrec@math.u-psud.fr)}
}

\maketitle

 \begin{abstract}
We study the $L^2$ spectral gap of 
a large system of strongly coupled diffusions on unbounded state space and subject to a  double-well potential. 
This system can be seen 
as a spatially discrete approximation 
of the stochastic Allen-Cahn equation
on the one-dimensional torus. We prove upper and lower bounds for 
the leading term of the spectral gap in the small temperature regime
with uniform control in the system size. 
The upper bound is given by an Eyring-Kramers-type formula. The lower bound is proven to hold also for the
logarithmic Sobolev constant. We establish a sufficient condition for the asymptotic optimality of the upper bound
and show that this condition is fulfilled
under suitable assumptions on the growth of the system size.
Our results can be reformulated in terms of a semiclassical Witten Laplacian in large 
dimension.  
\end{abstract}

\

\noindent\textit{MSC 2010: 60J60, 76M45, 81Q10, 35P15, 82C22, 60H15.}\\
\textit{Keywords: Spectral gap, Log-Sobolev inequality, Witten Laplacian, Metastability, Small noise asymptotics, Interacting particle system, Stochastic Allen-Cahn equation.}

\section{Introduction }

This paper  concerns
the rate of convergence to equilibrium at  low temperature  of 
a stochastic interacting particle system, 
which may be described as follows.
There are $N$ particles, at each time  $t\geq 0$ the state of the $k$-th particle is 
a real random number $\xi_k(t)$ and 
the trajectory $\xi_k = (\xi_k(t))_{t\geq 0}$ satisfies
for some fixed $\mu>1$ the  stochastic differential equation
\begin{equation}  \label{sde}   d\xi_k   \   =  \         \   \Big[  \   \ 
  \mu   \  \frac{   \xi_{k+1}  +   \xi_{k-1}
- 2\xi_k }{4\sin^2{\frac{\pi}{N}}}      \   +   \     \xi_k   \  -   \
\xi_k^3      \    \   \Big]   \   dt    \   +   \   
\sqrt {2h N}  \   dB_k  \      \    .    
\end{equation}
Here  $B_1= (B_1(t))_{t\geq 0}, \dots, B_N = (B_N(t))_{t\geq 0}$
are $N$ independent standard Brownian motions,
$h$ is a positive constant and $\xi_{N+1}:= \xi_1$, i.e. periodic
boundary conditions are assumed.    
When $h>0$ is kept fixed and $N$ is large, 
system \eqref{sde} can be 
seen as a discrete space approximation of
the stochastically perturbed Allen-Cahn equation on the interval
 $(0,\frac{2\pi}{\sqrt \mu})$:
 \begin{equation}  \label{spde}   
d u (x,t)   \   =  \         \   \big[  \    
      \partial^{2}_x u  (x,t)           \   +   \     u(x,t)
      \  -      \    u^3(x,t)   \       \big]   \   dt    \   +   \   
\sqrt {2h}  \   dW (x,t)\      \    ,    
\end{equation}
where now $(x,t)\in (0,\frac{2\pi}{\sqrt \mu})
\times (0,\infty)$,  the boundary condition $u(0,t) =   u(\frac{2\pi}{\sqrt \mu}, t)$ has to be satisfied
for every $t\geq 0$, and $dW$ is a space-time white noise. Thus,
for large $N$, 
one might think   of  $   \xi_k(t)     \sim      u \big( \frac{k}{N }   \frac{2\pi}{ \sqrt\mu}, t \big)   $, 
and of the chain $\xi(t) = (\xi_1(t), 
\dots, \xi_N(t))$ as giving the position
at time $t$ of an 
elastic ring of length $\frac{2\pi}{\sqrt \mu}$ moving in a highly viscous, noisy environment
and subject to a simple bistable external force. 
\\[0.1cm]
Equation~\eqref{spde} is a basic and widely studied stochastic partial differential 
equation, see e.g. \cite{FaJL, Fun,BrDmPr,GoMa,KORV,Hai2,BeGe,OtWeWe,DaZa,Ba}
and references therein.
For a more general background on the particle system~\eqref{sde} we refer
to~\cite{BeFeGe1,
BeFeGe2}. See also~\cite{BaBoMe} for aspects closely related to this work. The convergence of~\eqref{sde} to~\eqref{spde} for $N\ra \infty$
is discussed in~\cite{Ba}. 

\

\noindent{\bf Relaxation properties: heuristics and previous results.}
\\[0.1cm]
\noindent
For each fixed $h>0$
and number of particles $N$, the long time behaviour of~\eqref{sde} is described  
by its unique equilibrium distribution, explicitly given by the probability measure
on $\mb R^N$
 \[       m_{h,N}(dx)        \   :  =    \        \frac{  e^{-\frac{V(x)}{hN}} \ dx}{\int_{\mb R^N}    e^{-\frac{V(x)}{hN}}   dx}         \   ,            \] 
where the energy function $V:\mb R^N  \ra  \mb R$ is defined
as
\begin{equation}\label{defV}   V(x)     \   :=    \       \sum_{k=1}^N    \big(\    \frac 14 x_{k}^4   \  -   \  \frac 12 x^2_{k} \    \big)    \   +     \      \mu   \   \sum_{k=1}^N  
   \frac { (x_{k} - x_{k+1})^2}{8\sin^2(\frac{\pi}{N})}   \   +     \frac N4         \   ,     
   \end{equation}
with $x_{N+1} :=x_1$. This follows from the observation that the drift term 
in~\eqref{sde} is the gradient of $V$ and  from general facts about gradient-type diffusions. 
Similarly, 
for fixed $h>0$, there exists a
unique equilibrium distribution $m_{h,\infty}$ for
the infinite-dimensional system~\eqref{spde}, see \cite{DaZa,ReVe}. One might say that 
at equilibrium no ``phase transition'' occurs in the thermodynamic limit $N\ra \infty$. 
On the contrary, since for each $N$ the energy  $V$ admits  two local minima, 
given by 
\[     I_+   \  :=   \   (1,\dots, 1)    \  \  \text{ and }     \   \    I_-    \  :=  \
 (-1,\dots, -1)   \  ,     \]
 the deterministic dynamics
$ d\xi     =     -    \nabla V(\xi)     dt  $,  obtained from \eqref{sde} by setting $h=0$, admits two stable equilibrium points. Thus, when $h$ is positive but small, the typical picture of a so-called metastable dynamics emerges
\cite{FrWe, FaJL, BaBoMe}:
the system quickly reaches a local equilibrium 
in the basin of attraction of $I_+$ or $I_-$, 
depending on its initial condition; this local equilibrium endures 
for a long time, since, in order to be able to explore the whole state space
and distribute according to the global equilibrium $m_{h,N}$, 
the system has to wait for a sufficiently large stochastic fluctuation allowing to overcome the energetic barrier separating $I_+$ and $I_-$. 
The critical time scale at which such transitions between minima typically occur is exponentially large in the parameter $h$. Thus, for $h\ra 0$, 
one observes a significant slowdown in the relaxation towards $m_{h,N}$.

\
 
\noindent
The aim of this paper is to quantify the mentioned slowdown 
in the approach to equilibrium of~\eqref{sde}
when at the same time $h$ is small and $N$ is large. More specifically 
we shall study for $h\ra0$ and $N\ra \infty$ the behaviour of the
Poincar\'{e} constant $\lambda(h,N)$ and the logarithmic Sobolev constant 
$\rho(h,N)$ of~\eqref{sde}. 
These are defined as the largest constants satisfying respectively,
for every $\vp\in H^1(\mb R^N,m_{h,N})$,
the weighted Poincar\'{e} inequality 
\begin{equation}\label{specgapdef}        
{\lambda (h,N)}    \   \variance_{m_{h,N}}  (\vp)    \      \leq   \  
h N        \int  |\nabla \vp|^2   \       d m_{h,N}     \     ,       
\end{equation}
and the Gross inequality (or logarithmic Sobolev inequality)
\begin{equation}\label{logsobdef}            \rho(h,N)   \    \ent_{m_{h,N}}(\vp^2)      \      \leq   \   2    \,
h N           \int  |\nabla \vp|^2   \      d m_{h,N}     \     .        
\end{equation}
Here $\variance_{m_{h,N}}$ and $\ent_{m_{h,N}}$ denote the variance 
and entropy with respect to $m_{h,N}$, i.e.
$\variance_{m_{h,N} }(\vp)    :=    \int \vp^2   d m_{h,N}    -     \big( \int \vp  \    d m_{h,N}  \big)^2    $ and, for  $\vp\geq 0$, $\ent_{m_{h,N} }(\vp) :=   \int    \vp  \ \log \vp   \   d m_{h,N}    -      \int \vp  \   d m_{h,N} \log \big(\int   \vp  \  d m_{h,N}\big) $.  
The right hand side of~\eqref{specgapdef} is also called the Dirichlet form associated with the Markov process
defined by~\eqref{sde}. 
\\[0.1cm]
It is well-known that the Poincar\'{e} constant and 
logarithmic Sobolev constant  give the exponential rate of convergence to equilibrium,
respectively in variance and in entropy. We refer e.g. to Theorem 4.2.5 and 5.2.1 in~\cite{BaGeLe}, which also 
gives a general overview of the interplay between functional inequalities and Markov processes. We stress that, from the point of view of spin systems in statistical
mechanics,  we are dealing here with the problem of relaxation to equilibrium in a case of continuous unbounded 
single-spin state space and nonconvex energy function (see e.g.
  \cite{Le, Zeg, BoHe,BoHe2} in this context).  Concerning
 exponential convergence of stochastic equations in infinite dimensions with fixed noise parameter $h$
 we point e.g. to~\cite{GoMa,Hai1,Hai2,DaZa}.

 \

\noindent
If $N$ is kept fixed it is known  that the leading asymptotic behaviour of 
$\lambda(h,N)$ in the limit $h\ra0$ is given by an Eyring-Kramers-type formula
(see~\cite{BovEckGayKl,BovGayKl,HKN}, treating generic multiwell-diffusions in the small noise regime, and also the recent \cite{MeSch,Mic2}). 
More specifically, it follows for example from~\cite{HKN} and some straightforward adaptations of their arguments, 
that 
 \begin{equation}\label{fixeddimension}
\lambda(h,N)
\  =    \ 
    \frac 1\pi   \  \left|\frac{\det\Hess V(I_-)}{\det\Hess V(0)}\right|^{\frac12}    \  
e^{-\frac{1}{4h}}
\  \big(  \   1   \  +    \     \epsilon(h,N)    \   \big)        \    ,
\end{equation}
where the error $\epsilon(h,N) $ satisfies, for $h>0$ sufficiently small,
$  |\epsilon(h,N)|     \   \leq    \      C_N  \   h  $. Here  
$C_N$ is some positive constant which  
may a priori explode in $N$. On the other hand, as was already observed in~\cite{St}, the prefactor 
in~\eqref{fixeddimension} is convergent in the limit $N\ra \infty$: 
\begin{eqnarray}
\label{prefactor}
p(N)   \ :=    \ \frac 1\pi   \  \left|\frac{\det\Hess V(I_-)}{\det\Hess V(0)}\right|^{\frac12}\ 
\underset{N\to+\infty}{\longrightarrow}\ 
  \frac{ \sinh (\pi \sqrt{2\mu^{-1}})}{ \pi  \sin (\pi \sqrt{\mu^{-1}})}    \    .   
 \end{eqnarray}
Similarly, regarding the log-Sobolev constant $\rho(h,N)$, it follows again from general results (see\cite{MeSch}), that for fixed $N$
the leading term of $\rho(h,N)$ is again given by $p(N)e^{-\frac{1}{4h}}$.
We stress that also here, as for the error
in~\eqref{fixeddimension}, there is no control in $N$
on the error term. Thus no rigorous conclusion in the limit $N\ra \infty$
can be directly inferred  from these results.\\  

\noindent
On the other hand, 
rather strong results have been obtained in the analysis of 
the mean time needed for system~\eqref{sde} to go from $I_+$
to $I_-$: indeed it has been shown that 
an Eyring-Kramers-type
formula holds for  this transition time, with an error which is uniform 
in  $N$ (see in particular~\cite{BaBoMe} and \cite{Ba, BeGe},
which extend the results to the infinite-dimensional system~\eqref{spde}
and even to more general situations). Nevertheless, while
 the asymptotic relation
between stochastically defined mean transition times and 
analytic objects as $\lambda(h,N)$ is well-established
in very general situations for fixed $N$ (see again~\cite{BovGayKl}),
to the best of our knowledge
there are no rigorous results on how it might behave
in the regime of large $N$, even in the specific  model we are considering. 
In this paper we do not rely on the mentioned results on mean transition times and 
rather use purely analytical arguments, partly inspired by the semiclassical spectral-theoretic approach developped in \cite{HKN}. 

\

\noindent
{\bf Statement of the main results}
\\[0.1cm]
\noindent
Our first main result below shows that the Eyring-Kramers
 formula~\eqref{fixeddimension} provides an upper bound on $\lambda(h,N)$ with an error term which can indeed be  uniformly controlled in the system size $N$. Moreover it provides a quantitative lower bound 
 at logarithmic scale
on $\rho(h,N)$ which is independent of $N$. In particular it 
ensures that $\rho(h,N)$ and $\lambda(h,N)$ 
do not degenerate for any fixed $h$.
One might say that 
no ``dynamical phase transition'' occurs in the thermodynamic limit $N\ra \infty$ (see also~\cite{GoMa}).

\begin{theorem}\label{th.rough}
For every $\delta>0$ there exists a constant $C_\delta>0$ such that 
for every $h>0$ and every $N\in \mb N$
\[   C_{\delta}  
   \ e^{-\frac{3+2\sqrt 2+\delta}{24h}}   \  e^{-\frac {1}{4h}}
      \    \leq    \   \rho(h,N)   \   \leq    \  \lambda(h, N)    \   \leq      \  p (N)    \ e^{-\frac {1}{4h}} 
\    \big(   \   1   \  +   \   \epsilon(h,N)   \  \big)      \       ,        \]
where the prefactor $p(N)$ is given by \eqref{prefactor} and   the error term $\epsilon(h,N) $ satisfies 
\[    \exists C>0   \  \text{ s.t. }  \  \forall h\in (0,1]  \   ,   \    \forall N\in \mb N     \   ,     \   \   \    \       |\epsilon(h,N)|     \   \leq    \      C  \   h          \  .   \]
\end{theorem}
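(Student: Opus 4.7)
The three inequalities in Theorem~\ref{th.rough} can be treated separately. The middle inequality $\rho(h,N)\le\lambda(h,N)$ is standard: substituting $\vp=1+\varepsilon f$ with $\int f\,dm_{h,N}=0$ into the Gross inequality \eqref{logsobdef} and expanding to order $\varepsilon^2$ reproduces \eqref{specgapdef}, so that $\rho\le\lambda$.

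For the upper bound on $\lambda(h,N)$ my plan is to exploit the Rayleigh quotient characterization with a carefully chosen test function. Diagonalizing the circulant coupling matrix shows that $\hess V(0)$ has eigenvalues $-1+\mu\sin^2(\pi k/N)/\sin^2(\pi/N)$ for $k=0,\dots,N-1$, so the assumption $\mu>1$ forces a unique negative mode, supported on the uniform direction $\bar x:=\frac1N\sum_k x_k$. I would therefore take $\vp(x)=\chi(\bar x)$ with $\chi$ a smooth odd interpolation from $-1$ at $-\infty$ to $+1$ at $+\infty$ that is sharp near the origin. The variance of $\vp$ tends to $1$ by concentration of $m_{h,N}$ near $\{I_+,I_-\}$, while the Dirichlet form reduces to $h\int\chi'(\bar x)^2dm_{h,N}$ and is evaluated by Laplace's method in the $N-1$ transverse coordinates around the saddle manifold $\{\bar x=0\}$. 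The transverse Gaussian integral, divided by its analogue at $I_\pm$ which normalizes $m_{h,N}$, gives precisely the ratio $\sqrt{|\det\hess V(I_-)/\det\hess V(0)|}$ encoded in $p(N)$, together with the factor $e^{-1/(4h)}$ coming from $V(0)-V(I_\pm)=N/4$. The delicate part is to bound the Laplace remainders uniformly in $N$; this uses the explicit Fourier diagonalization of $\hess V$ at critical points along with large-deviation estimates for $m_{h,N}$ outside neighborhoods of $\{I_\pm,0\}$ that do not degenerate as $N\to\infty$ (the same structural fact which makes $p(N)$ converge in \eqref{prefactor}).

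The lower bound on $\rho(h,N)$ is the main difficulty. I would disintegrate $m_{h,N}=\pi(d\bar x)\otimes m^{\bar x}_{h,N}(dx^\perp)$ along the slow variable $\bar x$ and apply a two-scale decomposition principle of Grunewald--Otto--Villani--Westdickenberg type, which controls $\rho(h,N)$ from below in terms of the conditional LSI constants, the marginal LSI constant of $\pi$, and a cross term measuring the dependence of the conditional expectations on $\bar x$. For the conditional step, $\hess V$ restricted to $\{\bar x=\mathrm{const}\}$ is bounded below by $\mu-1>0$ uniformly in $N$: on the orthogonal complement of the uniform mode the coupling contributes at least $\mu$, while the diagonal double-well part contributes at least $-1$. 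Bakry--\'Emery then yields a conditional log-Sobolev constant of order one. For the marginal step, $\pi$ is a bounded perturbation of a one-dimensional double-well Gibbs measure, for which a Muckenhoupt/Bobkov--G\"otze Hardy-type criterion combined with a Rothaus/Holley--Stroock perturbation yields a lower bound of Eyring--Kramers type $e^{-1/(4h)}$, at the price of the additional exponential factor $e^{-(3+2\sqrt2+\delta)/(24h)}$ which absorbs the non-sharp quantitative constants in the one-dimensional comparison.

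The hard part is to keep every estimate uniform in $N$: the conditional Bakry--\'Emery lower bound, the Laplace remainders in the upper bound, and the one-dimensional Hardy/Rothaus estimates all must be shown to admit $N$-free controls despite the unbounded state space and the non-convexity of $V$ away from the wells. Improving the $(3+2\sqrt 2)$ constant (equivalently, matching the lower bound with $p(N)e^{-1/(4h)}$ on the exponential scale) would require a sharper semiclassical analysis, presumably deferred to the refined part of the paper producing the tight asymptotic expansion.
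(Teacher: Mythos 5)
Your handling of the middle inequality and of the upper bound coincides with the paper's own proof. The paper's quasimode is exactly your $\chi(\bar x)$, with $\chi'\propto e^{-\bar x^2/(2h)}$: then $hN\int|\nabla\chi|^2\,dm_{h,N}$ becomes a Gaussian-type integral whose covariance is $(2P+K-1)^{-1}$, i.e. $\hess V(O)$ with its negative mode flipped, and the ratio of determinants in $p(N)$ drops out after dividing by the normalisation $Z_{h,N}$, which is computed by Laplace asymptotics around $I_\pm$. The one point you leave implicit is the actual mechanism making all remainders uniform in $N$: it is not a soft "large-deviation" fact but the quantitative observation that the covariances $(\alpha P+K+\beta)^{-1}$ have trace bounded uniformly in $N$ (since $\nu_k\sim\mu k^2$), which renders the Gaussian moment and tail estimates dimension-free. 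This is the same structural input you correctly identify behind the convergence of $p(N)$.

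The lower bound is where you genuinely diverge, and your route has a gap. The paper does not disintegrate along $\bar x$: it builds an explicit bounded perturbation $W_{\alpha,\beta,n}$, supported where the single-site wells are concave, such that $V+W_{\alpha,\beta,n}$ is uniformly convex on all of $\mb R^N$, and then combines Bakry--\'Emery for the perturbed energy with the Holley--Stroock principle; the constant $\tfrac{3+2\sqrt2}{24}$ is precisely the optimised oscillation $(\sup W-\inf W)/(hN)$, governed by the constraint $\theta''\geq -2/(\sqrt2-1)^2$ on the convexifying cutoff. Your two-scale scheme, by contrast, needs a criterion of Grunewald--Otto--Villani--Westdickenberg or Leli\`evre type, and every such criterion requires the cross-interaction $\kappa$ between the macroscopic variable $\bar x$ and the fluctuation field to satisfy roughly $\kappa^2\lesssim\rho_1\rho_2$, where $\rho_1$ is the conditional and $\rho_2$ the marginal LSI constant. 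Here $\rho_2\sim e^{-1/(4h)}$ while the mixed block $P\,(3\,{\rm diag}(x_k^2))\,P^\perp$ of $\hess V$ is of order one away from the constant states, so the hypothesis fails by an exponential margin and the cross term you wave at cannot be absorbed. Moreover, your explanation of the factor $e^{-(3+2\sqrt2+\delta)/(24h)}$ is not tenable: a one-dimensional Hardy/Bobkov--G\"otze estimate for the double-well marginal loses only polynomial factors in $h$ beyond $e^{-1/(4h)}$, never an extra exponential, and the specific constant $3+2\sqrt2$ has nothing to do with a one-dimensional comparison --- it is the price of convexifying the single-site potential in the Holley--Stroock step.
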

\noindent
The exponential decay in $h$ given by the lower bound in Theorem~\ref{th.rough} appears to be rather rough, but 
unfortunately, when insisting to get bounds with uniform control in $N$, 
it is for the moment not clear how one could obtain 
a substantial improvement, even when focusing only on $\lambda(h,N)$. 
For the latter one can exploit the spectral theory of self-adjoint operators:
the generator of the Markovian semigroup giving the evolution
of \eqref{sde} is indeed the differential operator
\begin{equation}\label{generator}
L_{h}    \   :=    \      -   h N \Delta     \  +    \     \nabla V \cdot \nabla       \     .  
\end{equation}
The closure in $L^2(m_{h,N})$ of $L_{h}$
acting on $C_c^\infty(\mb R^N)$, which we still denote by $L_h$, is self-adjoint and nonnegative,  admits $0$ as eigenvalue and has purely discrete spectrum for each $h,N$ fixed
(see Section~\ref{subse.Witten} for more details). As a consequence of the Max-Min principle, its spectral gap, defined as its first nonzero eigenvalue,
coincides with $\lambda_{h,N}$.

\

\noindent
According to our second main result below, the problem 
of obtaining the Eyring-Kramers formula
as lower bound for $\lambda(h,N)$    
can then be reduced 
to the problem of proving a suitable separation between $\lambda(h,N)$
and the next eigenvalue of $L_h$.
More precisley, the existence of a uniform lower bound on the ``second spectral gap'' in a certain regime in which $N$ possibly grows 
to infinity, turns out to be sufficient for the validity
of the Eyring-Kramers formula  in the same regime:

\begin{theorem}
 \label{th.main2}
 Assume there exist constants $h_0, \delta>0$ and,  for each $h\in (0,h_0]$, 
 a set $\mc N(h)\subset \mb N $
  such that  
\begin{equation}\label{conditionsecondsg}    \forall h\in(0,h_0]  \   ,   \    \forall N\in \mc N(h)     \   ,     \   \   \     \
 \spec(L_h)   \   \cap  \   ]\lambda(h,N), \lambda(h,N) + \delta[         \       =       \   \emptyset     \    .   
 \end{equation}
 Then 
 \begin{equation*}
\label{eq.main}
\lambda(h,N)
\  =    \ 
    p(N)    \  
e^{-\frac{1}{4h}}
\  \big(  \   1   \  +    \     \epsilon(h,N)    \   \big)        \    ,
\end{equation*}
where  the prefactor $p(N)$ is given by   \eqref{prefactor} and the error
term $\epsilon(h,N) $ satisfies 
\[      \exists C>0    \    \text{ s.t. }    \ 
 \forall h\in (0,h_0]  \   ,   \    \forall N\in \mc N(h)     \   ,     \   \   \     \    |\epsilon(h,N)|     \   \leq    \      C  \   h      \  .   \]
\end{theorem}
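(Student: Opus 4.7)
Since Theorem~\ref{th.rough} already provides the matching upper bound, the remaining task is to establish the lower bound $\lambda(h,N)\geq p(N)\,e^{-1/(4h)}(1-Ch)$ with $C$ independent of $h\in(0,h_0]$ and $N\in\mc N(h)$. The strategy is a two-dimensional quasi-modal reduction in the spirit of~\cite{HKN}, the crucial point being that the spectral separation~\eqref{conditionsecondsg} is exactly what is needed to run the $\text{rank-}2$ trace argument while keeping every error uniform in $N$.

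\medskip

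\noindent\textbf{Step 1 (symmetric quasi-modes).} I would first construct two normalized quasi-modes $\tilde\vp_+, \tilde\vp_-\in L^2(m_{h,N})$, exchanged by the $\Z_2$-symmetry $x\mapsto -x$ of $V$, each supported essentially in the basin of attraction of $I_\pm$ and joined by a thin smooth transition layer across the saddle $0$. Setting $\tilde\vp_{\rm s}:=(\tilde\vp_++\tilde\vp_-)/\sqrt 2$ and $\tilde\vp_{\rm a}:=(\tilde\vp_+-\tilde\vp_-)/\sqrt 2$ then yields an almost-orthonormal pair (with overlap exponentially small in $1/h$ because of disjoint supports), where $\tilde\vp_{\rm s}$ is almost equal to the constant ground state~$1$ and $\tilde\vp_{\rm a}$ is almost orthogonal to constants.

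\medskip

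\noindent\textbf{Step 2 (trace reduction using~\eqref{conditionsecondsg}).} Let $\Pi$ be the spectral projector of $L_h$ on $[0,\lambda(h,N)+\delta/2]$. By~\eqref{conditionsecondsg} combined with the upper bound of Theorem~\ref{th.rough}, for all sufficiently small $h$ and all $N\in\mc N(h)$ the range of $\Pi$ is the two-dimensional space spanned by~$1$ and the eigenfunction for $\lambda(h,N)$ (simplicity comes from the $\Z_2$-splitting), and $\spec(L_h)\setminus\{0,\lambda(h,N)\}\subset[\lambda(h,N)+\delta,+\infty)$. For every $\vp$ orthogonal to constants one then has the standard bound
\[\|(\id-\Pi)\vp\|^2_{L^2(m_{h,N})}\ \leq\ \delta^{-1}\, hN\int |\nabla\vp|^2\, dm_{h,N}.\]
Applied to $\vp=\tilde\vp_{\rm a}$, together with the direct Laplace estimate $hN\int|\nabla\tilde\vp_{\rm a}|^2\, dm_{h,N}=O(e^{-1/(4h)})$ (the rough version of Step 3 below), this gives $\|(\id-\Pi)\tilde\vp_{\rm a}\|=O(\delta^{-1/2}e^{-1/(8h)})$, and similarly for $\tilde\vp_{\rm s}$. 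Thus $\{\tilde\vp_{\rm s},\tilde\vp_{\rm a}\}$ is asymptotically an orthonormal basis of $\range\Pi$, and comparing traces in both bases yields
\[\lambda(h,N)\ =\ \tr(L_h\Pi)\ =\ hN\int\bigl(|\nabla\tilde\vp_{\rm s}|^2+|\nabla\tilde\vp_{\rm a}|^2\bigr)dm_{h,N}\ \bigl(1+O(e^{-c/h})\bigr).\]

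\medskip

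\noindent\textbf{Step 3 (uniform Laplace expansion at the saddle).} The remaining task is to evaluate the Dirichlet form of $\tilde\vp_{\rm a}$ by the Laplace method at the saddle $0$ with relative error $O(h)$ uniform in $N$. The unique unstable direction of $\Hess V(0)$ is the constant mode $e:=(1,\ldots,1)/\sqrt N$ (since $\mu>1$ makes all other modes stable), and the transition layer along this mode has width $O(\sqrt{hN})$. Choosing the cutoff to be the one-dimensional committor $\chi'(u)\propto e^{-u^2/(2hN)}$ along $e$, the $(N-1)$-dimensional orthogonal integration becomes Gaussian with covariance $hN\,(\Hess V(0)|_{e^\perp})^{-1}$; combined with the Laplace normalizations of $\tilde\vp_\pm$ at $I_\pm$, the ratio of Gaussian integrals reassembles into exactly $p(N)\,e^{-1/(4h)}$. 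The main obstacle is that generic semiclassical Laplace remainder estimates depend on the ambient dimension, which would destroy the $N$-uniformity. This is overcome by exploiting the explicit structure of $V$: (i) at every critical point the Hessian equals a rank-one shift of $\mu\cdot(\text{discrete Laplacian on }\Z_N)$, whose nonzero eigenvalues are bounded below by $\mu-1>0$ and whose log-determinants have a finite limit, accounting for~\eqref{prefactor}; and (ii) the nonquadratic part of $V$ is the decoupled quartic $\tfrac14\sum_k x_k^4$, so all Taylor remainders reduce to sums of one-dimensional Gaussian moments and are controlled uniformly in $N$ by Gaussian concentration around each critical point. Carrying out this dimension-uniform Laplace expansion and inserting the result into Step~2 produces the lower bound matching Theorem~\ref{th.rough} and hence the claimed equality.
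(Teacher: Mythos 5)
Your overall strategy---use \eqref{conditionsecondsg} to reduce the low-lying spectrum to a finite-dimensional subspace spanned approximately by explicit quasimodes built from the one-dimensional committor profile along the constant mode, then evaluate the relevant Dirichlet forms by a dimension-uniform Laplace expansion---is essentially the paper's. The paper packages it differently: instead of the pair $\tilde\vp_{\pm}$ and a rank-two trace identity, it works with the single mean-zero antisymmetric quasimode $\psi\propto\chi$ of Definition~\ref{defpsi} (exactly your $\tilde\vp_{\rm a}$ once the transition profiles of $\tilde\vp_{\pm}$ are chosen so that $\tilde\vp_++\tilde\vp_-$ is constant), projects it onto $\range \mbf 1_{[0,\delta)}(L_h)$, observes that the projection is orthogonal to constants and hence is an eigenfunction for $\lambda(h,N)$, and reverses the Rayleigh-quotient inequality (Proposition~\ref{spectralgaprepresentation}). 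This also sidesteps your unjustified assertion that $\range\Pi$ is two-dimensional: hypothesis~\eqref{conditionsecondsg} does not exclude that $\lambda(h,N)$ is a degenerate eigenvalue, the $\Z_2$-symmetry argument you invoke for simplicity is not actually carried out, and if $\dim\range\Pi\geq 3$ your identity $\tr(L_h\Pi)=\lambda(h,N)$ is false (it equals $\lambda$ times its multiplicity).

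The more serious gap is in Step~2. Smallness of $\|(\id-\Pi)\tilde\vp_{\rm a}\|$ alone does not let you ``compare traces'': since $L_h$ is unbounded, the correction term in $\lp L_h\Pi\tilde\vp_{\rm a},\Pi\tilde\vp_{\rm a}\rp=\lp L_h\tilde\vp_{\rm a},\tilde\vp_{\rm a}\rp-\lp L_h(\id-\Pi)\tilde\vp_{\rm a},(\id-\Pi)\tilde\vp_{\rm a}\rp$ is not controlled by $\|(\id-\Pi)\tilde\vp_{\rm a}\|^2$. By the spectral theorem it is controlled by $\delta^{-1}\|L_h\tilde\vp_{\rm a}\|^2_{L^2(m_{h,N})}$, so in addition to the Dirichlet-form asymptotics you need the $N$-uniform bound $\|L_h\tilde\vp_{\rm a}\|^2=O\big(h^2e^{-\frac{1}{4h}}\big)$. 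This is precisely Proposition~\ref{Propquasimodal2} of the paper and the origin of the term $\sqrt{\mc E(\varphi)}$ in Proposition~\ref{spectralgaprepresentation}; it is a substantive estimate (resting again on the uniform Gaussian moment bounds of Lemma~\ref{momentsgaussian} and the lower bound of Lemma~\ref{lemmaquadraticbelow}), and it shows the relative error in the trace comparison is $O(h^2)$, not $O(e^{-c/h})$ as you claim. With this estimate supplied and the multiplicity issue handled as in the paper, your argument closes; your Step~3 is the content of Proposition~\ref{quasimodal1} and Subsections~\ref{subsectionZ}--\ref{subsectionUB}.
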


\noindent
Our last main theorem implies that 
there exist regimes with unbounded $N$ 
under which the Eyring-Kramers formula~\eqref{fixeddimension} holds  
with bounded error $\epsilon(h,N)$. Indeed, in order
to be in the situation of 
Theorem~\ref{th.main2},
it is enough that 
$N$ grows slower than $h^{-\frac 34}$: 
\begin{theorem}
 \label{th.main3}
 Let $C>0$ and $\alpha \in (0, \frac 34)$.  Then there exist constants $h_0,\delta>0$ 
 such that condition \eqref{conditionsecondsg} in Theorem \ref{th.main2} is fulfilled with
 \[     \mc N(h)    \   =   \    \big\{   \   N\in \mb N   :       N\leq C h^{-\alpha} \    \big\}    \   .    \]
 \end{theorem}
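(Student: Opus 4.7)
By Theorem~\ref{th.rough}, $\lambda(h,N)\le p(N)\,e^{-1/(4h)}(1+Ch)$ is exponentially small, so proving~\eqref{conditionsecondsg} reduces to producing $h_0,\delta>0$ such that the \emph{third} eigenvalue $\lambda_3(L_h)$ of the self-adjoint operator $L_h$ satisfies $\lambda_3(L_h)\ge \delta$ for every $h\in(0,h_0]$ and $N\in \mc N(h)$. My plan is to work with the unitarily equivalent semiclassical Witten/Schr\"odinger operator
$$
\hat L_h \;:=\; -\hbar^2\Delta + \frac{|\nabla V|^2}{4} - \frac{\hbar}{2}\,\Delta V \quad\text{on } L^2(\mb R^N,dx),\qquad \hbar := hN,
$$
whose spectrum equals $\hbar\cdot \spec(L_h)$. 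The constraint $\alpha<3/4$ implies $\hbar=hN\le Ch^{1-\alpha}\to 0$, so we are in a genuine semiclassical regime. The harmonic approximation of $|\nabla V|^2$ near each minimum $I_\pm$ is $|A(x-I_\pm)|^2$ with $A=\nabla^2 V(I_\pm)$ and $\spec(A)\subset [2,O(N^2)]$: the standard Schr\"odinger heuristic predicts $\lambda_3(\hat L_h)\approx 2\hbar$, hence $\lambda_3(L_h)\approx 2$ independently of $N$, which is what we must establish rigorously.

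\emph{Quasi-modes and the min-max scheme.} I would take $\psi_\pm\propto e^{-V/(2\hbar)}\chi_\pm$, where $\chi_\pm$ are smooth $[0,1]$-cutoffs supported in the basin of attraction of $I_\pm$ and equal to $1$ on a neighborhood of $I_\pm$ not touching the separatrix through the saddle $0$. A direct computation gives
$$
\hat L_h \psi_\pm \;=\; e^{-V/(2\hbar)}\bigl(-\hbar^2\Delta\chi_\pm + \hbar\,\nabla V\cdot \nabla\chi_\pm\bigr),
$$
which is supported on $\supp(\nabla\chi_\pm)$, where the weight $e^{-V/(2\hbar)}$ is controlled by the saddle height: since $V(0)-V(I_\pm)=N/4$ and $\hbar=hN$, the Agmon weight $e^{-(V(0)-V(I_\pm))/\hbar}=e^{-1/(4h)}$ is $N$-independent, so $\|\hat L_h\psi_\pm\|/\|\psi_\pm\|$ is exponentially small in $h$, uniformly in $N$. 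By the min-max principle applied to the two-dimensional trial space $F=\sspan\{\psi_+,\psi_-\}$, the problem reduces to showing $\langle \hat L_h \psi,\psi\rangle_{L^2(dx)} \ge \delta\hbar\,\|\psi\|^2$ for all $\psi\in F^\perp\cap H^1$.

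\emph{IMS localization and local Brascamp--Lieb.} I would decompose via an IMS partition of unity $\eta_+^2+\eta_-^2+\eta_r^2=1$, with $\eta_\pm$ supported in convex boxes $\Omega_\pm:=\{|x_k\mp 1|\le r\,\,\forall k\}$ for a small fixed $r>0$ chosen so that $\nabla^2 V\ge(2-Cr)I$ on $\Omega_\pm$ (the nonlinear correction $3\,\mathrm{diag}(x_k^2-1)$ is uniformly small on such a box). Transferring back to $L^2(m_{h,N})$ via the unitary $\psi=e^{-V/(2\hbar)}f/\sqrt Z$, the orthogonality $\psi\perp\psi_\pm$ in $L^2(dx)$ translates (up to exponentially small terms) into $\int \eta_\pm f\,dm_{h,N}=0$, so the Brascamp--Lieb inequality applied to the conditioned log-concave measure $m_{h,N}(\cdot|\Omega_\pm)$ yields a local Poincar\'e inequality with constant $\ge 2-Cr$, which gives
$$
\langle \hat L_h(\eta_\pm\psi),\eta_\pm\psi\rangle_{L^2(dx)} \;\ge\; (2-Cr)\,\hbar\,\|\eta_\pm\psi\|^2.
$$
The IMS remainder $-\hbar^2\sum_i\int|\nabla\eta_i|^2\psi^2\,dx$ is $O(\hbar^2)\|\psi\|^2$, negligible against the target $\hbar\|\psi\|^2$.

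\emph{The hard step and the threshold $\alpha<3/4$.} The delicate contribution is that of $\supp\eta_r$, where $|\nabla V|^2/4\ge c>0$ uniformly but the Witten correction $\tfrac{\hbar}{2}\Delta V$ has size $\hbar\|\Delta V\|_\infty = O(\hbar N^3)=O(hN^4)$, produced by the stiff Fourier modes of the coupling. A naive pointwise lower bound on the Witten potential only reaches $\alpha\le 1/4$. To push to $\alpha<3/4$, I would exploit that $\Delta V$ is, up to a quadratic remainder, an $x$-independent constant, so the same shift $-\tfrac{\hbar}{2}\Delta V$ also enters the harmonic-oscillator ground states in $\Omega_\pm$ and cancels between the localized and remainder terms; complementarily, one combines Agmon exponential-decay estimates for low-eigenvalue functions (based on the Agmon distance $V/2$, giving decay $e^{-N/(4\hbar)}=e^{-1/(4h)}$ on $\supp\eta_r$) with the uniform mass bound $m_{h,N}(\supp\eta_r)\le e^{-c/h}$ to show that the remainder region contributes $o(\|\psi\|^2)$ to both $\langle \hat L_h\psi,\psi\rangle$ and $\|\psi\|^2$. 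The principal technical hurdle is to quantify these Agmon/mass estimates so that the polynomial dimensional factors $N^3$ from the stiff Fourier modes are balanced against the exponential gain $e^{-c/h}$ uniformly in $N$; this balance is precisely what fixes the threshold $\alpha<3/4$.
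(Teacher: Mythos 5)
Your global architecture --- pass to the ground--state transform, reduce to a uniform lower bound on the \emph{third} eigenvalue of $L_h$ via a min-max argument with two well--localized quasimodes, an IMS partition, and local uniform convexity of $V$ near $I_\pm$ --- does match the paper's strategy, and the analysis near the two wells (convexity of $\hess V$ on a small box around $I_\pm$, Brascamp--Lieb/Bakry--\'Emery for the conditioned log-concave measure) is essentially what the paper does via Lemmata~\ref{le.Hess},~\ref{lemmaextension} and Proposition~\ref{BLlemma}. The fatal gap is in the remainder region $\supp\eta_r$. That region contains the saddle $O$ and a full neighbourhood of it, where $\nabla V$ vanishes, so the asserted uniform bound $|\nabla V|^2/4\ge c>0$ on $\supp\eta_r$ is simply false; at $O$ the Witten potential equals $-\tfrac{\hbar}{2}\Delta V(O)=-\tfrac{\hbar}{2}(\tr K-N)$, which is of order $-\hbar N^{3}$, i.e.\ hugely negative. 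The two devices you invoke to repair this do not constitute an argument: a constant added to the potential does not ``cancel between the localized and remainder terms'' of an IMS decomposition (it shifts every piece equally), and Agmon estimates apply to eigenfunctions, not to the arbitrary $\psi\in F^\perp$ to which your min-max reduction commits you --- moreover, uniform-in-$N$ Agmon estimates are exactly the tool the paper states is \emph{not} available in this limit. The paper's actual mechanism is the orthogonal splitting \eqref{decompositiontwoscales}, $\Delta_{f,h}=\Delta^{\mc C}_{f,h}+\Delta^{\mc C^{\perp}}_{f,h}$, into two nonnegative operators: near the diagonal but away from the wells one discards the transverse part and keeps only the one-dimensional operator, whose Laplacian correction $h\Delta^{\mc C}f=\tfrac h2(3\hat x_0^2+3\|P^\perp x\|^2-1)$ is $O(h)$ (the enormous transverse trace never enters) and has the favourable sign near the saddle; away from the diagonal one discards the diagonal part and uses the uniform transverse convexity \eqref{uniformconvexity} together with a concentration estimate. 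This structural idea is absent from your proposal, and without it the saddle region is not controlled.

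Two further points. First, your account of the threshold is not where $3/4$ actually comes from: in the paper it arises in the region far from the diagonal \emph{and} with $\|Px\|$ unbounded, from the competition between the $e^{\gamma N}$ prefactor in the concentration estimate \eqref{concentrationunbounded} of Lemma~\ref{le.concent} (a determinant ratio of order $4^{N/2}$) and the transverse localization scale $h^{\alpha/6}$, i.e.\ $\gamma C h^{-\alpha}\ll h^{\alpha/3-1}\iff\alpha<3/4$; a balance of ``polynomial factors $N^3$ against $e^{-c/h}$'' would impose no constraint on $\alpha$ at all, so your heuristic cannot be the source of the hypothesis you are asked to use. Second, for cutoffs adapted to the boxes $\{|x_k\mp1|\le r\ \forall k\}$ and built as products over the $N$ coordinates, one has $|\nabla\eta_\pm|^2=O(N/r^2)$ pointwise, so the IMS remainder is $O(\hbar^2N)=O(h^2N^3)$, which is $o(\hbar)$ only for $\alpha<1/2$; the paper avoids this by localizing only in the two scalar variables $\overline x$ and $\|P^{\perp}x\|$, whose gradients have norm $O(1)$ and $O(h^{-\alpha/6})$ respectively, giving an IMS error $O(h^{2-\alpha/2})=o(h)$.
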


\noindent 
The above results concerning $\lambda_{h,N}$ can
be equivalently reformulated in terms of 
splitting properties of the ground state of a specific semiclassical 
Schr\"{o}dinger operator in large dimension. 
This is a consequence of the well-known ground state transformation, 
see e.g. \cite{JLMaSc}: up to conjugation with $e^{-\frac{V}{2hN}}$ and some $N$-dependent dilatation
(see Subsection~\ref{subse.Witten}
for more details), the operator $h\,L_h$ 
turns out to be unitarily equivalent to the operator acting in the flat space $L^2(dx)$ and defined through 
\begin{equation}
\label{eq.defDelta0}
\Delta^{(0)}_{f,h}    \    :=    \       -   h^2 \Delta     \     +    \ 
 |\nabla f|^2      \  -    \       h    \Delta f        \   , \quad\text{where}
 \quad  f(x)    \    : =   \      \frac {V(\sqrt N x  )}{2N} \ .
\end{equation}
We like to mention that a semiclassical Schr\"{o}dinger operator having the form of $\Delta^{(0)}_{f,h}$, 
with $f$ a generic smooth function, is also called semiclassical Witten Laplacian associated to $f$. The superscript $(0)$ stresses that we consider only 
operators on functions, 
the Witten Laplacian being more generally defined 
through a supersymmetric extension on the full algebra of differential forms. 
The operator acting on $p$-forms is  commonly denoted by $\Delta^{(p)}_{f,h}$
and connects in the semiclassical limit $h\ra0$ topological properties of the underlying manifold to the topology of the energy landscape induced by $f$
\cite{Wit, HeSj4, CFKS}.

\

\noindent
We stress that, even if one focuses only on the operator $\Delta^{(0)}_{f,h}$
acting on functions (that is, equivalently on the diffusion operator $L_h$, 
as in the present paper), 
the enlarged supersymmetric point of view may provide further insights and
a powerful technical tool. We refer
especially to~\cite{Sjo, Joh, Hel,He1,HKN,KuTa,HeNi,HeNi2,LeP,Dig,BHM,LeNi} 
for works in this spirit and the links between statistical mechanics
and Witten Laplacians. 
In particular, as was recognized in \cite{HKN}, the operator $\Delta^{(1)}_{f,h}$
acting on $1$-forms, 
being related for $h\ra 0$ to the energetic bottlenecks responsible for the slowdown 
of the underlying 
stochastic process, appears rather naturally when analyzing the low-lying eigenvalues
of $\Delta_{f,h}^{(0)}$.  
\\[0.1cm]
We emphasize that semiclassical techniques
as WKB expansions, Agmon estimates
and harmonic approximation
for Schr\"{o}dinger operators, used e.g. in~\cite{HKN}, 
  are generally not uniformly controlled in the limit $N\ra\infty$
  (see however \cite{MaMo}
for previous work on $\Delta^{(p)}_{f,h}$ in large dimension
under convexity assumptions on $f$ and \cite{SjoLarge1,SjoLarge2,HelDEA}).
Also for the specific model we consider here, the arguments of
\cite{HKN} do not carry over with uniform bounds in $N$.

\

\noindent
{\bf Comments on the techniques used in this paper}
\\[0.1cm]
\noindent
Though inspired by the supersymmetric approach of~\cite{HKN}, in this paper we do not make explicit use of $\Delta^{(1)}_{f,h}$. Indeed, a careful analysis of the energy $V$
permits to construct a very efficient global quasimode passing through the bottleneck and connecting the two minima
of $V$ (see Definition~\ref{defpsi}). This construction, together with a precise analysis of Laplace integrals in large dimension, enables us to give the upper bound of Theorem~\ref{th.rough}.
\\[0.1cm]
For the lower bound in Theorem~\ref{th.rough}, we depart from the semiclassical approach and
rather exploit perturbation techniques for fixed $h$. These permit,
even though for general $\mu>1$ the function $V$ is not convex outside a compact set, to reduce to the case of
a convex energy and then to apply the well-known Bakry-\'{E}mery criterion. 
We use here that the interaction part in the energy $V$ 
is strong enough to ensure good relaxation properties for large $N$.
Thus, roughly speaking, we regard the energy coming from the single particle double-well potential as a perturbation of the interaction part. This is opposed to
the perturbative regime considered in 
previous works as \cite{BoHe, BoHe2}: in these references the interaction constant $\mu$
is tuned in a way that it is rather the interaction part to become a perturbation 
of the single particle potential. 

\

\noindent 
The relevant quantity naturally appearing in the estimates leading to Theorem~\ref{th.main2} is 
the quadratic form 
$$
\mc E (\varphi)\ :=\ 
\frac{ \int_{\mathbb R^N}     |L_h \varphi|^2  \  d m_{h,N}   }{hN\int_{\mathbb R^N} |\nabla\varphi|^2  \ d m_{h,N}}\,.
$$
To connect to the existing literature, we point out that, after integration by parts, this quantity can be 
equivalently rewritten in the two forms
\begin{equation}\label{energy1}
\mc E (\varphi)\ =\   
\frac{\int_{\mathbb R^N}\Gamma^2(\varphi) \  d m_{h,N} }{\int_{\mathbb R^N} \Gamma(\varphi) \  d m_{h,N} }
\ =\
\frac{\int_{\mathbb R^N}\big(L_{h}^{(1)}\nabla \varphi\big)\cdot\nabla\vp\  \  d m_{h,N} }{hN\int_{\mathbb R^N} |\nabla\varphi|^2 \  d m_{h,N} }
\,.
\end{equation}
Here $\Gamma, \ \Gamma^2$ are respectively the carr\'e du champ operator and its iteration (see for example \cite{BaGeLe} for more details
about this notion) and  $L_{h}^{(1)}:= L_h \otimes{\rm Id}+
hN \,\Hess V$ is an operator acting on vector fields (i.e. $1$-forms),
related to $\Delta_{f,h}^{(1)}$ via ground state transformation. 
The last expression in~\eqref{energy1} can be generalized
by allowing, instead of $\nabla\vp$, more general, non-gradient vector fields. 
This is one of the main advantages of the supersymmetric approach
and is crucially exploited in works as~\cite{HKN,HeNi, LeP,LeNi}, or~\cite{Dig} in a discrete setting. 
In the arguments we give here we do not use this additional freedom
since we can work with the gradient of the quasimode already exploited in the proof of Theorem~\ref{th.rough} and thus streamline both the results and the presentation.

\

\noindent
For the proof of Theorem~\ref{th.main3} 
we shall  adopt the Schr\"{o}dinger point of view 
and thus work with $\Delta^{(0)}_{f,h}$. We combine here
standard localization techniques for the analysis of semiclassical 
Schr\"{o}dinger operators \cite{CFKS} and a two-scale analysis
naturally adapted to the structure of the energy $V$.

\

\noindent{\bf Plan of the paper}
\\[0.1cm]
The rest of the paper is organized as follows. In Section~\ref{section.2}
we discuss basic properties of the model 
and the precise relation between the diffusion operator $L_h$ and 
the Schr\"{o}dinger operator $\Delta^{(0)}_{f,h}$. Sections~\ref{section.3},~\ref{se.sharpspectralgap}
and~\ref{se.lowerbound} are respectively devoted to the
proofs of Theorems~\ref{th.rough},~\ref{th.main2} and~\ref{th.main3}.
\\[0.1cm]
Subsection~\ref{subsectionZ},
which might also  be of independent interest, 
provides a sharp Laplace-type asymptotics for 
the normalization constant $Z_{h,N}:= \int_{\mb R^N}    e^{-\frac{V(x)}{hN}}   dx   $ when $h\ra 0$ with 
uniform  control in $N$.

\section{Basic properties of the model}\label{section.2}

\subsection{Properties of $V$ and related Gaussian estimates}

The aim of this section is to fix our notation and to provide 
some basic background information on our model which we shall use throughout in the rest of the analysis.  
 
\

\noindent
We denote by $\lp\cdot, \cdot\rp$ the standard scalar product in $\mb R^N$, by $\|\cdot\|$,
or $|\cdot|$ when no ambiguity is possible,
the corresponding Hilbert norm and, more generally, for $p\in \mb N$ we write
\[    \|x\|_{p}   \   :=   \   \big(  \sum_{k=1}^N  |x_k|^p   \  \big)^{\frac 1p}    \   .      \]
The gradient, Hessian and (negative) Laplacian acting on functions in $\mb R^N$
are denoted respectively by $\nabla, \hess$ and $\Delta$.

\

\noindent
Throughout the paper we fix a $\mu>1$. The energy function $V$, defined in~\eqref{defV}, 
can be rewritten in a more compact notation as
\begin{equation}\label{compactV}
 V(x)     \   =    \      \frac 14\|x\|_4^4   \  +
     \      \  \frac 12 \lp  x, ( K  - 1)  x \rp     \   +    \    \frac N4   \   ,    \end{equation}
where $K:\mb R^N \ra \mb R^N$ is a normalised discrete Laplacian, defined by setting for
 $x\in \mb R^{N}$ and $k\in\{1, \dots, N\}$,
  \begin{equation}\label{defdiscreteLaplace}   (K  x)_k \  := \  \  \frac{\mu}{4\sin^2(\frac \pi N)} \big( \ 2 x_k \ - \   x_{k+1} \ - \  x_{k-1}    
 \  \big)   \    .  
 \end{equation}
It is understood that $x_{N+1}:=x_1$ and $x_{0}:=x_N$, which corresponds to periodic
boundary conditions. It holds $\lp x, Kx\rp =   \lp Kx, x\rp$ and,
according to our choice of sign, $\lp x, K x\rp \geq 0$. The operator $K$ is diagonalised
through the discrete Fourier transform 
$\hat x\in \mb R^N$ of $x\in \mb R^N$, defined by
\[    \hat x_k    \  :  =    \    \frac{1}{\sqrt N}   \sum_{j=1}^{N}    x_j \    e^{-\im 2\pi \frac{j}{N}k}     \ .     \]
More precisely we have for every $k\in \{0, \dots, N-1\}$, 
\begin{eqnarray}\label{eigenvaluesK}   (\widehat{Kx})_k   & =  &    \nu_k \  \hat x_k    \   ,   \    \     \   \text{ where }    \   
\nu_{k}    \   :=   \    \mu   \,  \frac{\sin^2(k\frac \pi N)}{\sin^2(\frac \pi N)}      \  . 
\end{eqnarray}
Note that $\nu_0=0$ is a simple eigenvalue of $K$ and that its smallest non-zero eigenvalue equals 
 $\mu$ for every 
$N\in \mb N, N\geq 2$. We shall denote by $P:\mb R^N\ra \mb R^N$
the projection onto the eigenspace of $K$ corresponding to the eigenvalue $0$
and by $P^{\perp} := 1 - P$ the projection onto its orthogonal complement. 
Note that $\range P={ \rm Span}(1,\dots, 1)$ so that $P$  
associates to $x\in \mb R^N$ the constant 
vector with components  the mean of $x$:  for every $ k\in \{1, \dots, N\}$,
\begin{equation} \label{projector}    (Px)_k    \  =    \  \overline x   \     :=    \      \frac 1N \sum_{j=1}^N x_j      \  =   \    
 \frac{\hat x_0}{\sqrt N }   \   .   
\end{equation}
For shortness the range $\range P$ of $P$ will sometimes also be denoted by $\mc C$, and we refer to it 
as the space of constant states, or the ``diagonal'' of $\mb R^N$. 
Similarly we write $\mc C^{\perp}:=\range P^{\perp}$ for the space
of states orthogonal to the constants. 

\

\noindent
We mention here explicitly the following simple identities, 
which we shall frequently use in the sequel:
\[ \forall x\in \mb R^N     \  ,  \   \     \sum_{k=1}^N (P^{\perp}x)_k    \  =   \   0      \   \   \   \text{ and }   \   \   \    \|Px\| \   =    \   \sqrt N 
\overline x    \ =  \    \hat x_0      \ .   \]
The fact that the first non-zero eigenvalue of $K$ equals $\mu$ 
implies the following  discrete Poincar\'{e}-type inequality: 
\begin{equation}\label{discretePoincare}    \forall \rho\in [0, \mu] \  , \  \forall x\in \mb R^N     \   ,    \   \   \    
\lp x,  Kx  \rp        \geq     \     \rho  \  \big( \   \|x\|^2    \   -  \    \lp x, Px \rp  \   \big) \  .   
\end{equation}

\noindent
For more information on the discrete Fourier transform
and discrete Laplacian, 
see for example \cite{Te}.

\

\noindent
Some basic features of the energy landscape determined by $V$
are the following. First, it is straightforward to check that the constant states given by 
\[     I_+   \  :=   \   (1,\dots, 1)    \  \  ,    \   \    I_-    \  :=  \
 (-1,\dots, -1)      \  \  \text{ and }     \   \    O    \  :=  \
 (0,\dots, 0)     \ ,    \]
 are critical points of $V$, i.e. satisfy $\nabla V(x)=0$, for every $N\in \mb N$. Moreover
\begin{equation}\label{hessians} \hess V (I_{\pm})   \ =   \      K+2  \   \      \text{ and  }      \      \   
 \hess V (O)   \ =   \      K-1             \  .     
 \end{equation}
It follows from~\eqref{eigenvaluesK} that $K+2$ admits only  strictly positive eigenvalues, while $K-1$ has one simple eigenvalue $-1$ and, since $\mu>1$,  all the others are strictly positive. The identities~\eqref{hessians} imply therefore in particular that $I_{\pm}$ are local minima and $O$ is a saddle point, i.e. 
a critical point of index $1$.  The additive constant $\frac N4$ appearing 
in~\eqref{compactV} is chosen such that
\begin{equation}\label{energyvalues}
V(I_\pm )    \ =  \  0    \   \     \text{ and }     \   \     V(O)  \   =   \    \frac N4     \  . 
\end{equation}

\noindent
A crucial feature of the model, implied by  the assumption $\mu>1$, 
is the following. When 
restricted to the $N-1$ dimensional subspace $\mc C^\perp = \range P^{\perp}$, 
 the quadratic form $\hess V$ is strictly convex, uniformly in $N$ and $x\in \mb R^N$.
Indeed, according to the discrete Poincar\'{e} inequality given in~\eqref{discretePoincare}, for every $x$ and $w\in \mb R^N$, 
\[     \lp w, \hess V(x) w\rp       \  \geq     \   \lp w , (K-1) w \rp  \  
 \geq     \     (\mu -1)    \   \|w\|^2    \   -  \    \mu \lp w, P w \rp  \  .      \]
In particular one gets the lower bound
 \begin{equation}\label{uniformconvexity}    \forall x\in \mb R^N \ ,  \  \forall w\in \mc C^{\perp}  \  ,   \   \  \   \lp w, \hess V(x) w\rp       \   \geq     \     (\mu -1)    \   \|w\|^2      \  .      
 \end{equation}

\noindent
The latter inequality can be used to show that 
 $I_{+}, I_-$ and $O$ are the only critical 
points of $V$ (see also \cite{BeFeGe1}):
\begin{lemma}
Fix $N\in \mb N$ and let $x\in \mb R^N\setminus\{O, I_{+}, I_-\}$. Then
$\nabla V(x)   \neq 0$. 
 \end{lemma}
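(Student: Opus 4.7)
The plan is to exploit the uniform convexity of $V$ in the directions orthogonal to the diagonal $\mathcal{C}$, as recorded in \eqref{uniformconvexity}. First I would decompose any $x\in\mathbb R^N$ as $x=\overline x\,\mathbf 1+w$, where $\mathbf 1=(1,\dots,1)$ and $w:=P^\perp x\in\mathcal C^\perp$, and reduce the problem to showing $w=0$.

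To do so I would introduce the one-variable function
\[
g(t)\ :=\ V(\overline x\,\mathbf 1+tw),\qquad t\in[0,1].
\]
Two observations will play against each other. On the one hand, by the chain rule and the assumption $\nabla V(x)=0$,
\[
g'(1)\ =\ \langle w,\nabla V(x)\rangle\ =\ 0.
\]
On the other hand, the gradient at a constant state is itself constant: from \eqref{compactV} and $K\mathbf 1=0$ one gets $\nabla V(\overline x\,\mathbf 1)=(\overline x^{\,3}-\overline x)\,\mathbf 1\in\mathcal C$, and since $w\in\mathcal C^\perp$,
\[
g'(0)\ =\ \langle w,\nabla V(\overline x\,\mathbf 1)\rangle\ =\ 0.
\]

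Now the key input is \eqref{uniformconvexity}: for every $t$, because $w\in\mathcal C^\perp$,
\[
g''(t)\ =\ \langle w,\hess V(\overline x\,\mathbf 1+tw)\,w\rangle\ \geq\ (\mu-1)\,\|w\|^2,
\]
and $\mu-1>0$. Thus $g'$ is strictly increasing on $[0,1]$ as soon as $w\neq 0$, which is incompatible with $g'(0)=g'(1)=0$. Hence $w=0$, i.e., $x$ is a constant state $\overline x\,\mathbf 1$, for which $\nabla V(x)=0$ reduces to the scalar equation $\overline x^{\,3}-\overline x=0$, whose only roots $\overline x\in\{-1,0,1\}$ correspond exactly to $I_-,O,I_+$.

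I do not expect any serious obstacle here: the only substantive ingredient is the uniform strict convexity \eqref{uniformconvexity}, which is already in hand, and the decomposition $\mathbb R^N=\mathcal C\oplus\mathcal C^\perp$ together with the fact that $K$ annihilates $\mathcal C$ makes the boundary values $g'(0)$ and $g'(1)$ transparent. The mildly delicate point is simply to keep track of the orthogonality $w\perp\mathbf 1$ to guarantee $g'(0)=0$; everything else is essentially the strict monotonicity of a strictly convex function's derivative.
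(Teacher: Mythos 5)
Your proof is correct and follows essentially the same route as the paper: both exploit the uniform strict convexity \eqref{uniformconvexity} on $\mathcal C^\perp$ to force any critical point onto the diagonal $\mathcal C$, using that the constant vector $\overline x\,\mathbf 1$ is always a critical point of $V$ restricted to the fiber $\{y:\overline y=\overline x\}$, and then solve the resulting one-dimensional problem $\overline x^{\,3}-\overline x=0$. The paper phrases the convexity input as ``at most one critical point of $V|_{H_c}$'' whereas you make the same point explicit through the interpolating function $g$ and the contradiction $g'(0)=g'(1)=0$; this is a presentational difference only.
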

\begin{proof}
Estimate~\eqref{uniformconvexity} implies that
for each $c\in \mb R$ there can be at most one critical point of
the restriction $V|_{ H_{c}}$ of $V$  to the 
hyperplane
$H_c := \{x\in \mb R^N: \overline x = c\}$. Since for every $c\in \mb R$ the constant vector
$\mbf c:= (c, \dots, c)\in \mc C$ satisfies
\[     \forall w\in \mc C^{\perp}      \  ,   \   \  \     \lp \nabla V(\mbf c) , w \rp  
\    =  \    (c^3-c) \sum_{k=1}^N w_k    \  =   \       0   \    ,   \   \text{ i.e }     \nabla \, (V|_{H_c} )(\mbf c) \ = \   0    \ ,     \]
the critical points of $V$ necessarily have to be on 
the diagonal $\mc C$. The statement of the lemma follows now by noting that for  $\mbf c:= (c, \dots, c)$
\[    V(\mbf c)   \  =   \     \frac 14 c^4    \    -   \     \frac 12 c^2     \  +   \    \frac 14    \    .\] 
\end{proof}
\noindent
Since, according to~\eqref{hessians}, the quadratic part of $V$ around its critical points is essentially given by the discrete Laplacian $K$, 
part of our analysis will rely on a good control in large dimension of 
Gaussian integrals, whose covariances are
given by the resolvent of $K$ or slight perturbations thereof.  
To be specific, we shall consider
 for suitable $\alpha, \beta\in \mb R$ operators $Q:\mb R^N \ra \mb R^N$ 
 of the form 
\begin{equation}\label{covariance}   Q  \    :  =   \  \big(  \    \alpha P     \   +    \    K    \   +    \beta   \   \big)^{-1}     \   ,     
\end{equation}
where $P$ is the projection  given by~\eqref{projector}.  
Note that the particular case $\alpha=0, \beta = 2$ corresponds to $Q=(K+2)^{-1}$, which 
according to~\eqref{hessians} equals the inverse of the Hessian of $V$ at the minima. Taking instead $\alpha=2$, $\beta=-1$, one obtains 
$Q=(2P + K-1)^{-1}$, which
is the inverse of the Hessian of $V$ at the saddle point, modulo inverting sign of its unique negative eigenvalue. 

\noindent
In general, for any choice of $\alpha, \beta$ such that 
 $Q$ is well-defined, 
it follows from~\eqref{eigenvaluesK} that for each $k\in\{0, \dots, N-1\}$
it holds $(\widehat{Qx})_k    \  =  \    \sigma_k \  \hat x_k$, where the eigenvalues are now given by 
\[   \sigma_{k}    \   =   \   \frac{1}{\nu_k+\beta}     \   \text{ for } 
k\in\{1, \dots, N-1\}    \   \  \text{ and }      \    \       \sigma_0 =\frac{1}{\alpha + \beta}  \  .    \]
In particular, $Q$ is positive in the sense of quadratic forms if and only if
$\alpha+\beta>0$ and $\mu+\beta>0$, which is assumed from now on. A crucial 
property of $Q$ is that it is of trace class, uniformly in the dimension:
\begin{equation}\label{traceclass}
\exists C> 0   :   \     \forall N\in \mb N  \   ,   \   \    \     \tr Q    \   :=   \   
\sum_{k=0}^{N-1}   \sigma_k     \     <     \  C       \       .          
\end{equation}
The latter estimate is obtained by straightforward  estimates on the 
$\nu_k $'s (essentially $\nu_k \sim \mu k^2$, see their expression in~\eqref{eigenvaluesK}). 
We remark en passant that~\eqref{traceclass}
fails to hold in the 
case of higher-dimensional single particle state, i.e. $x_k \in \mb R^d$ with $d>1$.
This is linked to well-known difficulties in the analytical treatment of the Stochastic 
Allen-Cahn equation in higher spatial dimension. 
A straightforward consequence of~\eqref{traceclass}
is a uniform control in $N$ of moments  of the 
centered Gaussian distribution with covariance $Q$
whose density is given by
$$
d\mu\,=\,\frac{e^{-\frac{\langle x, Q^{-1}x \rangle}{2}}}{\big((2\pi)^N\det Q\big)^\frac12}\,dx\,.
$$
In particular we will repeatedly exploit in this paper the following
uniform bound, which we state here for later reference.

\begin{lemma}\label{momentsgaussian}
Let $Q$ be defined as in~\eqref{covariance} with $\alpha+\beta>0$ and $\mu+\beta>0$. Then there exists a constant $C>0$ such that for $p\in\{4,6\}$ 
and every $h>0$, $N\in \mb N$,
 \begin{equation}\label{momentstatement}     \frac{1}{\big( (2\pi hN)^N \det Q
 \big)^{\frac 12}} \   \int_{\mb R^N}   N^{-1}\|x\|_p^p  \     e^{- \frac{\lp x, (hNQ)^{-1}x\rp}{2}}   \  dx       \   \leq \   C  h^{\frac p2}   \  .    
 \end{equation}
\end{lemma}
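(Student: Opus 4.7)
The plan is to reduce the integral to a sum of one-dimensional Gaussian moments, exploit the circulant nature of $Q$ to make the diagonal entries uniform in $k$, and then close with the trace bound~\eqref{traceclass}.

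First, write $d\mu(x)=\big((2\pi hN)^N\det Q\big)^{-\frac12}e^{-\frac{\langle x,(hNQ)^{-1}x\rangle}{2}}dx$; this is the centered Gaussian on $\mb R^N$ with covariance $hNQ$. Expanding $\|x\|_p^p=\sum_{k=1}^N|x_k|^p$ and using that the marginal of $x_k$ under $\mu$ is one-dimensional Gaussian with variance $(hNQ)_{kk}=hN\,Q_{kk}$, I would write
$$
N^{-1}\int_{\mb R^N}\|x\|_p^p\,d\mu\;=\;N^{-1}\sum_{k=1}^N\int_{\mb R^N}|x_k|^p\,d\mu\;=\;c_p\,(hN)^{p/2}\,N^{-1}\sum_{k=1}^N Q_{kk}^{p/2},
$$
where $c_p$ is the $p$-th absolute moment of the standard one-dimensional Gaussian (explicitly $c_4=3$, $c_6=15$).

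The key observation is that $Q$ is a circulant matrix. Indeed, by~\eqref{defdiscreteLaplace} and~\eqref{projector}, both $K$ and $P$ commute with the cyclic shift of coordinates; hence so does $\alpha P+K+\beta$, and therefore so does its inverse $Q$. Equivalently, $Q$ is diagonalized by the discrete Fourier transform, so its diagonal entries in the canonical basis are all equal:
$$
Q_{kk}\;=\;\frac{1}{N}\,\tr Q\qquad\text{for every }k\in\{1,\dots,N\}.
$$
Combined with the uniform trace bound $\tr Q\leq C$ from~\eqref{traceclass}, this yields $Q_{kk}\leq C/N$ independently of $N$ and $k$.

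Substituting back, I obtain
$$
N^{-1}\int_{\mb R^N}\|x\|_p^p\,d\mu\;\leq\;c_p\,(hN)^{p/2}\cdot N\cdot N^{-1}\left(\frac{C}{N}\right)^{p/2}\;=\;c_p\,C^{p/2}\,h^{p/2},
$$
which is the claimed estimate. There is no real obstacle in this argument: everything rests on recognizing circulance of $Q$ (which distributes the trace evenly across the diagonal) and on the fact that the trace bound~\eqref{traceclass} is the genuinely $N$-uniform input. The same scheme would extend verbatim to any even $p$, with the understanding that $c_p$ depends only on $p$.
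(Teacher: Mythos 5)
Your proof is correct and follows essentially the same route as the paper's: expand $\|x\|_p^p$ into one-dimensional Gaussian moments, observe that all diagonal entries of $Q$ coincide and equal $N^{-1}\tr Q$ (you phrase this via circulance/shift-invariance, the paper via the Fourier representation of $Q$ — the same observation), and close with the uniform trace bound~\eqref{traceclass}.
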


\

\begin{proof}
Differentiating suitably the moment generating function
of a Gaussian with covariance $Q$, given by
$$
\forall\,\xi\,\in\mathbb R^N\ ,\quad \frac{1}{\big( (2\pi)^N \det Q
 \big)^{\frac 12}} \   \int_{\mb R^N}   e^{- \frac{\lp x, Q^{-1}x\rp}{2}} e^{\lp x,\xi\rp}  \  dx 
 \ =\
 e^{\frac{\lp \xi, Q\xi\rp}{2}}      \     ,    
$$
yields  
for the left hand side of~\eqref{momentstatement} the expression
      \begin{equation}\label{momentgenerating}  \   \frac {C_p}{N}  \sum_{k=1}^{N} 
 (hNQ_{k,k})^{\frac p2}     \     ,    \   \  
 \text{ with   }   \  \  \  C_4=3,\  C_6 =15     \  .     \end{equation}
Using the ``Fourier integral representation" of $Q$  we get for its diagonal terms the expression
$Q_{k,k}    \  =     \    \frac 1N   \sum_{j=0}^{N-1}\sigma_j$ for every $k$.
It follows that~\eqref{momentgenerating} equals $  h^{\frac p2} C_p (\sum_{k=0}^{N-1} \sigma_k)^{\frac p2}$. This yields the desired result thanks to~\eqref{traceclass}. 
\end{proof}
\noindent
For convenience of the reader, we also state explicitly  
the following 
simple tail estimate, which will be exploited throughout the paper.
Recall that $\overline x$ denotes the mean of $x\in \mb R^N$ 
as defined in~\eqref{projector}.

\begin{lemma}\label{easytail}
 Let $Q$ be defined as in~\eqref{covariance} with $\alpha+\beta>0$ and $\mu+\beta>0$. Then for every $r>0$, the following estimate holds
 for $C=C(r)=\frac{(\alpha+\beta)r^2}{2}$ and for every $h\in (0,1]$
 and $N\in \mb N$,
\begin{equation*}     \frac{1}{\big( (2\pi hN)^{N}\det Q\big)^{\frac 12}}  \int_{\{|\overline x|>r\}}   e^{-\frac{\lp x, (hNQ)^{-1} x \rp}{2}}   \  dx   \  \leq     \    
\left(\frac{ h} {\pi C}\right)^\frac12
   e^{-\frac{C}{h}}            \  .   
\end{equation*}
\end{lemma}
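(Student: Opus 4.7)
The plan is to reduce the $N$-dimensional integral to a one-dimensional Gaussian tail estimate by exploiting the fact that $Q$ and $P$ are simultaneously diagonalizable.

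First I would observe that, since $P$ and $K$ are both diagonal in the Fourier basis (with $P$ having eigenvalue $1$ on the constant mode $\hat x_0$ and $0$ on all other modes), the operator $Q^{-1}=\alpha P+K+\beta$ leaves the decomposition $\mathbb R^N=\mathcal C\oplus \mathcal C^\perp$ invariant. On $\mathcal C$ it acts as multiplication by $\alpha+\beta$ and on $\mathcal C^\perp$ it acts as $K+\beta$. Consequently the quadratic form splits orthogonally:
\begin{equation*}
\langle x, Q^{-1}x\rangle \ =\ (\alpha+\beta)\,\|Px\|^2\ +\ \langle P^\perp x,(K+\beta)P^\perp x\rangle,
\end{equation*}
and the determinant factorizes as $\det Q=(\alpha+\beta)^{-1}\det\bigl((K+\beta)|_{\mathcal C^\perp}\bigr)^{-1}$. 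Using $\|Px\|^2=N\overline x^{\,2}$, the Gaussian density with covariance $hNQ$ factorizes into a one-dimensional Gaussian in $\overline x$ with variance $\frac{h}{\alpha+\beta}$ and an $(N-1)$-dimensional Gaussian in $P^\perp x$.

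Then I would integrate out the $P^\perp x$ variable, whose contribution is exactly $1$ by the factorization above, leaving the purely one-dimensional integral
\begin{equation*}
\frac{1}{\big((2\pi hN)^N\det Q\big)^{1/2}}\int_{\{|\overline x|>r\}}e^{-\frac{\langle x,(hNQ)^{-1}x\rangle}{2}}dx
\ =\
\Bigl(\tfrac{\alpha+\beta}{2\pi h}\Bigr)^{1/2}\!\!\int_{\{|s|>r\}}\!\!e^{-\frac{(\alpha+\beta)s^2}{2h}}\,ds,
\end{equation*}
after the change of variables $s=\overline x$ (the Jacobian being absorbed by the $N$-dependent normalization).

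Finally I would invoke the standard one-dimensional Gaussian tail bound: for $\sigma^2=\frac{h}{\alpha+\beta}$ and $r>0$,
\begin{equation*}
\int_{|s|>r}\frac{e^{-s^2/(2\sigma^2)}}{\sqrt{2\pi\sigma^2}}\,ds\ \leq\ \sqrt{\tfrac{2}{\pi}}\,\tfrac{\sigma}{r}\,e^{-r^2/(2\sigma^2)},
\end{equation*}
which follows from $\int_r^\infty e^{-s^2/(2\sigma^2)}ds\leq \int_r^\infty \frac{s}{r}e^{-s^2/(2\sigma^2)}ds=\frac{\sigma^2}{r}e^{-r^2/(2\sigma^2)}$. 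Substituting $\sigma^2=\frac{h}{\alpha+\beta}$ and $C=\frac{(\alpha+\beta)r^2}{2}$ gives exactly the prefactor $\bigl(\frac{h}{\pi C}\bigr)^{1/2}$ and exponent $e^{-C/h}$ claimed. There is no serious obstacle here; the only subtlety is checking that the factorization of $Q$ yields the correct normalization, which is a routine verification using the Fourier diagonalization already recorded in \eqref{eigenvaluesK}.
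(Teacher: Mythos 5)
Your proof is correct and follows essentially the same route as the paper: both diagonalize $Q$ via the Fourier transform, reduce the integral to the one-dimensional Gaussian marginal of $\overline x$ (variance $h/(\alpha+\beta)$), and conclude with the standard tail bound $\int_\eta^\infty e^{-t^2/2}\,dt\leq \eta^{-1}e^{-\eta^2/2}$. The constants check out, so nothing further is needed.
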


\

\begin{proof}
Diagonalising $Q$ via the Fourier transform and recalling that $\overline x =  \frac{\hat x_0}{\sqrt N}$, we obtain
$$
 \frac{1}{\big( (2\pi hN)^{N}\det Q\big)^{\frac 12}}  \int_{\{|\overline x|>r\}}   e^{-\frac{\lp x, (hNQ)^{-1} x \rp}{2}}   \  dx 
 \ =\ 
\frac{2}{\sqrt{2\pi}} \int_{\{y_{0}>{r\sqrt{\frac{\alpha+\beta}{h}}\}}}   e^{-\frac{y_{0}^2}{2}}   \  dy_{0}
$$
and
the statement boils down to the standard 
Gaussian tail-estimate: \begin{equation}
\label{eq.tail}
 \forall\eta>0  \   ,   \   \  \  
\int_{\eta}^{+\infty}e^{-\frac{t^2}{2}}\,dt
\quad \leq\quad \frac{1}{\eta}e^{-\frac{\eta^2}{2}} \,.
\end{equation}

\end{proof}

\noindent
Lastly,
the ratio of the determinants of $\hess V(I_+)$ and $\hess V(0)$
converges, as already observed in \cite{St} (see also \cite{BaBoMe,BeGe}). More precisely the following statement holds true:
 \begin{lemma}
  \label{lemmaconvergenceratiodet}
 The relation \eqref{prefactor} mentioned in the introduction holds true: 
\begin{equation}\label{convergenceratiodet}
\sqrt\frac{\det \hess V(I_+)}{|\det \hess V(0)|}  \   =  \    
\sqrt\frac{\det \hess V(I_-)}{|\det \hess V(0)|}    \    \underset{N\to+\infty}{\longrightarrow}\ 
    \frac{ \sinh (\pi \sqrt{2\mu^{-1}})}{ \sin (\pi \sqrt{\mu^{-1}})}         \  .  
\end{equation}
\end{lemma}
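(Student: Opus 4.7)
My plan is to reduce the limit to an elementary trigonometric computation using the explicit spectrum of $K$. The first equality in \eqref{convergenceratiodet} is immediate from \eqref{hessians} since $\hess V(I_+)=\hess V(I_-)=K+2$. For the limit, the key observation is that $\nu_{0}=0$ while $\nu_{k}\geq\mu>1$ for every $k\geq 1$ by \eqref{eigenvaluesK}, so $K-1$ has exactly one negative eigenvalue, namely $-1$. Hence
$$|\det\hess V(O)|\,=\,\prod_{k=1}^{N-1}(\nu_{k}-1)\qquad\text{and}\qquad\det\hess V(I_{\pm})\,=\,\prod_{k=0}^{N-1}(\nu_{k}+2).$$

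Setting $c_{N}:=2\sin^{2}(\pi/N)$ and using $\nu_{k}=\mu(1-\cos(2\pi k/N))/c_{N}$, I would recast each factor as
$$\nu_{k}+2\,=\,\frac{\mu}{c_{N}}\bigl[\cosh\phi_{N}-\cos(2\pi k/N)\bigr],\qquad\nu_{k}-1\,=\,\frac{\mu}{c_{N}}\bigl[\cos\theta_{N}-\cos(2\pi k/N)\bigr],$$
where $\phi_{N}>0$ and $\theta_{N}\in(0,\pi)$ are implicitly defined by $\cosh\phi_{N}=1+2c_{N}/\mu$ and $\cos\theta_{N}=1-c_{N}/\mu$, both well-defined since $0<c_{N}/\mu<1$ for every $N\geq 2$.

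The computational heart of the argument is the classical product identity derived from $\prod_{k=0}^{N-1}(z-e^{2\pi ik/N})=z^{N}-1$ together with the factorization $2\cosh\phi-2\cos\theta=e^{-\phi}(e^{\phi}-e^{i\theta})(e^{\phi}-e^{-i\theta})$, which yields
$$\prod_{k=0}^{N-1}\!\bigl(2\cosh\phi-2\cos(2\pi k/N)\bigr)\,=\,4\sinh^{2}(N\phi/2),\quad\prod_{k=0}^{N-1}\!\bigl(2\cos\theta-2\cos(2\pi k/N)\bigr)\,=\,-4\sin^{2}(N\theta/2).$$
In the ratio $\det(K+2)/|\det(K-1)|$ the prefactors $(\mu/c_{N})^{N}$ and $2^{-N}$ cancel, and the minus sign in the second identity matches exactly the lost negative factor at $k=0$, so
$$\frac{\det\hess V(I_{\pm})}{|\det\hess V(O)|}\,=\,\frac{\sinh^{2}(N\phi_{N}/2)}{\sin^{2}(N\theta_{N}/2)}.$$

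It only remains to pass to the limit. Using the half-angle identities $\cosh\phi-1=2\sinh^{2}(\phi/2)$ and $1-\cos\theta=2\sin^{2}(\theta/2)$, the defining relations become the exact equalities $\sinh(\phi_{N}/2)=\sqrt{2/\mu}\,\sin(\pi/N)$ and $\sin(\theta_{N}/2)=\sin(\pi/N)/\sqrt{\mu}$, whence $N\phi_{N}/2\to\pi\sqrt{2\mu^{-1}}$ and $N\theta_{N}/2\to\pi\sqrt{\mu^{-1}}$ as $N\to\infty$. The second limit is non-degenerate because $\mu>1$ forces $\pi\sqrt{\mu^{-1}}\in(0,\pi)$, so $\sin(\pi\sqrt{\mu^{-1}})>0$. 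Taking square roots yields \eqref{convergenceratiodet}. I do not expect a real obstacle here: once the trigonometric product identity is recorded, the remainder is elementary asymptotic analysis, and the only point demanding a small sanity check is the sign of the cosine product, which is forced to be negative precisely by the single negative eigenvalue of $K-1$.
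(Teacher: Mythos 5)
Your proposal is correct and takes a genuinely different, and arguably cleaner, route than the paper. The paper writes the ratio as $\sqrt 2\prod_{k=1}^{N-1}\bigl(1+\tfrac{3}{\nu_k-1}\bigr)^{1/2}$, identifies the target limit with the infinite product $\sqrt 2\prod_{k\geq 1}\tfrac{\mu k^2+2}{\mu k^2-1}$ via Euler's product formula for $\sin$, and then shows the finite product converges to the infinite one through explicit estimates on $|\nu_k-\mu k^2|$. You instead obtain an \emph{exact} closed form for every $N$, namely
\[
\frac{\det(K+2)}{|\det(K-1)|}\;=\;\frac{\sinh^2(N\phi_N/2)}{\sin^2(N\theta_N/2)},
\]
via the roots-of-unity identity $\prod_{k=0}^{N-1}(z-e^{2\pi ik/N})=z^N-1$, after which the limit is an elementary one-variable calculus computation from $\sinh(\phi_N/2)=\sqrt{2/\mu}\,\sin(\pi/N)$ and $\sin(\theta_N/2)=\sin(\pi/N)/\sqrt\mu$. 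This buys you more: a finite-$N$ formula rather than only a limit, and no need for the error analysis on $\nu_k-\mu k^2$. The paper's route makes the Euler-product interpretation of the constant $c_\mu$ more visible, which is its own small virtue. One small imprecision in your write-up: for $\theta_N$ to be well-defined in $(0,\pi)$ you only need $\cos\theta_N\in(-1,1)$, i.e. $0<c_N/\mu<2$, which always holds for $\mu>1$; your stated sufficient condition $c_N/\mu<1$ fails at $N=2$ when $1<\mu\leq 2$, though this has no effect on the argument.
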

\begin{proof}
According to~\eqref{hessians} and to~\eqref{eigenvaluesK}, we have
for $2\leq N\in \mb N$,
\begin{gather*}
  \frac{\det \hess V(I_\pm)}{|\det \hess V(0)|} 
  \  =    \     \frac{\det (K+2)}{|\det (K-1)|} 
  \  =   \   
\prod_{k=0}^{N-1}    \frac{\nu_k + 2}{|\nu_k-1|}   \    =   \      2  \, 
\prod_{k=1}^{N-1}  \frac{\nu_{k}+2}{\nu_k-1}   \,,
\end{gather*}
so we want to show that
$$
\sqrt {2} \sqrt{ 
\prod_{k=1}^{N-1}  \frac{\nu_{k}+2}{\nu_k-1}}=
\sqrt {2}
\prod_{k=1}^{N-1}\left(1+  \frac{3}{\nu_k-1}\right)^{\frac12}
\ \underset{N\to+\infty}{\longrightarrow}\ c_{\mu}\,,
$$
where $c_{\mu}$ is given by 
$$
c_{\mu}\ :=\ 
\frac{ \sinh (\pi \sqrt{2\mu^{-1})}}{ \sin (\pi \sqrt{\mu^{-1})}}
\ =\ 
\sqrt 2\,\prod_{k=1}^{+\infty}\frac{\mu k^2 +2}{\mu k^2 -1}
\,,
$$
the last equality being a direct consequence of 
Euler's product formula 
$$
\forall\,z\,\in\mathbb C\ ,\quad \sin(\pi z)\,=\,\pi z\prod_{k=1}^{+\infty}\left(1-\frac{z^2}{k^2}\right)\,. 
$$
Noticing now the relation
$$
\prod_{k=1}^{N-1}\left(
1+\frac{3}{\nu_{k} -1}\right)^\frac12
=
\left(
1+\frac{3}{\mu \frac{1}{\sin^2(\frac{\pi}{N})} -1}\right)^\frac{\textbf{1}_{2\mathbb N}(N)}{2}
\prod_{k=1}^{\lfloor{\frac{N-1}{2}}\rfloor}\left(
1+\frac{3}{\nu_{k} -1}\right)\,,
$$
we are then lead to prove that
$$
\prod_{k=1}^{\lfloor{\frac{N-1}{2}}\rfloor}\left(
1+\frac{3}{\nu_{k} -1}\right)
\ \underset{N\to+\infty}{\longrightarrow}\ 
\prod_{k=1}^{+\infty}\frac{\mu k^2 +2}{\mu k^2 -1}
\ =\ \lim\limits_{N\to+\infty}\prod_{k=1}^{\lfloor{\frac{N-1}{2}}\rfloor}\left(1+\frac{3}{\mu k^2 -1}\right)\,,
$$
and it is therefore sufficient to show that
\begin{equation}
\label{eq.sufficient}
\prod_{k=1}^{\lfloor{\frac{N-1}{2}}\rfloor}\frac{
1+\frac{3}{\nu_{k}-1}}
{1+\frac{3}{\mu k^2 -1}}
=
\prod_{k=1}^{\lfloor{\frac{N-1}{2}}\rfloor}\!\!\!\left(
1+3\frac{\nu_{k}-\mu k^2}{(\nu_{k}-1)(\mu k^2+2)}
\right)\ \underset{N\to+\infty}{\longrightarrow}\
1\,.
\end{equation}
The end of the proof follows
 from
the computations done in \cite{BaBoMe}~pp.~331-332 but we give the details
for the sake of completeness.
From the inequalities
$$
\forall\,x\,\in[0,\frac\pi2]\ ,\quad
0\,\leq\,x^2(1-\frac{x^2}{3})\,=\,x^2-\frac{x^4}{3}\,\leq\,\sin^2x\,\leq\, x^2\,,
$$
we deduce that for every $2\leq N\in\mathbb N$ and $k\in\{1,\dots,\lfloor{\frac{N-1}{2}}\rfloor\}$,
\begin{equation}
\label{eq.akn}
\mu\, k^2(1-\frac{\pi^2}{12})\,\leq\,\mu\, k^2\left(1-\frac{\pi^2k^2}{3N^2}\right)\,\leq\,\nu_{k}\,\leq\,\frac{3\mu N^2k^2}{3N^2-\pi^2}\,,
\end{equation}
and therefore
\begin{equation*}
-\frac{\mu\pi^2k^4}{3N^2}\,\leq\,\nu_{k}-\mu\,k^2\,\leq\,\frac{ \mu\pi^2k^2}{3N^2-\pi^2}\,,
\end{equation*}
from which we obtain that for every
$N\geq 2$ and
 $k\in\{1,\dots,\lfloor{\frac{N-1}{2}}\rfloor\}$,
 \begin{equation}
\label{eq.akn-k2}
|\nu_{k}-\mu\,k^2|\,\leq\,\frac{2\mu\pi^2k^4}{N^2}\,.
\end{equation}
It follows from \eqref{eq.akn} and \eqref{eq.akn-k2} that
there exist $2\leq N_{0}\in \mathbb N$
and a positive constant $C$ such that for every $N\geq N_{0}$
and $k\in\{\lfloor{\frac{N_{0}-1}{2}}\rfloor,\dots,\lfloor{\frac{N-1}{2}}\rfloor\}$,
$$
\left|3\frac{\nu_{k}-\mu\,k^2}{(\nu_{k}-1)(\mu k^2+2)}\right|\,\leq\, \frac{C}{N^2}\,.
$$
Using the inequality
$
|\ln(1+x)|\,\leq\,\frac{|x|}{1-|x|} 
$ valid on $(-1,1)$,
we get that for every $\mathbb N\ni N\geq \max\{N_{0},\sqrt C+1\}$,
$$
\Big|\ln \!\!\!\prod_{k=\lfloor{\frac{N_{0}-1}{2}}\rfloor}^{\lfloor{\frac{N-1}{2}}\rfloor}\!\!\!\left(
1+3\frac{\nu_{k}-\mu k^2}{(\nu_{k}-1)(\mu k^2+2)}
\right)\!\!\Big|\,\leq
\sum_{k=\lfloor{\frac{N_{0}-1}{2}}\rfloor}^{\lfloor{\frac{N-1}{2}}\rfloor}\frac{C}{N^2-C}\,\underset{N\to+\infty}{\longrightarrow}\,
0\,,
$$
and the equation \eqref{eq.sufficient} we were looking for follows,
since for any fixed $k$, $1+3\frac{\nu_{k}-\mu k^2}{(\nu_{k}-1)(\mu k^2+2)}$ goes to $1$ when $N\to+\infty$.
\end{proof}
\noindent
For additional background on Gaussian measures and perturbations thereof in large and infinite dimensions we point to~\cite{GlJa,Sim,DP}.

\subsection{Relation between $L_{h}$ and the Witten Laplacian}
\label{subse.Witten}

\noindent
As already mentioned in the introduction,
all the results stated there
can equivalently be reformulated in terms of Witten Laplacians
using the ground state transformation.
 More precisely, up to a multiplicative factor $hN^{-1}$,
the operator
 $L_h:=-   h N \Delta      +      \nabla V \cdot \nabla$ 
 acting on $L^2(e^{-\frac{V}{hN}}dx)$
 is unitarily equivalent to a semiclassical Witten Laplacian  acting on the flat
$L^2(dx)$:
\begin{equation}
\label{eq.linkWittenflattilde}
 e^{-\frac{ \tilde f}{h}}\  h\, L_{h}\    e^{\frac{ \tilde f}{h}}     \    =\    N\big(-   h^2 \Delta     \     +    \ 
 |\nabla \tilde f|^2      \  -    \       h    \Delta \tilde f\big)
 \ =:\  N\,\Delta^{(0)}_{\tilde f,h}
        \   , 
\end{equation}
where
\[    \tilde f (x)    \    : =   \   \   \frac {V( x  )}{2N}        \  .     \]
Using in addition 
the unitary
dilatations ${\rm Dil}_{\lambda}$ on $L^2(dx)$, which are
defined, for any $\lambda>0$ and any $g\in L^2(dx)$,
by ${\rm Dil}_{\lambda} g:=\lambda^{\frac N2} g(\lambda\,\cdot)$,
we have also the unitary equivalence
\begin{equation}
\label{eq.linkWittenflat}
{\rm Dil}_{\sqrt N}\ N\Delta^{(0)}_{\tilde f,h}\ {\rm Dil}_{\frac{1}{\sqrt N}}\ 
=\ -   h^2 \Delta        \  +\      
 |\nabla f|^2      \  -   \         h    \Delta  f
 \ =:\  \Delta^{(0)}_{f,h}\,,
\end{equation}
where
\[    f (x)    \    : = \   \tilde f(\sqrt N x)  \ =   \      \frac {V(\sqrt N x  )}{2N}        \  .     \]
Note that  $\Delta^{(0)}_{f,h} = \sum_j (\partial_j + h\partial_j f)^*\, 
(\partial_j + h\partial_j f)$ with domain $C_c^\infty(\mb R^N)$ is symmetric and nonnegative in $L^2(dx)$, that its 
Schr\"{o}dinger potential $|\nabla f|^2 -h\Delta f$ is smooth and, 
 for fixed $h$ and $N$,  tending to infinity as $|x|\ra \infty$ .
It follows then from standard arguments of the theory of Schr\"{o}dinger operators
(see e.g. \cite[Proposition 7.10 and Theorem 9.15]{He2}) that $\Delta^{(0)}_{f,h}$ is essentially 
selfadjoint on $C_c^\infty(\mb R^N)$, and that its closure, which we still denote by
$\Delta^{(0)}_{f,h}$,
has compact resolvent and therefore purely discrete spectrum. 
Notice moreover that $\ker \Delta_{f,h}^{(0)}\, =\, {\rm Span}(e^{-\frac fh})\,$.

\

\noindent
Due to~\eqref{eq.linkWittenflattilde} and~\eqref{eq.linkWittenflat}, 
these properties can be immediately transferred   from $\Delta^{(0)}_{f,h}$
to $L_h$ and it holds
\begin{equation}
\label{eq.spectrumLhWitten}
\spec(\Delta^{(0)}_{f,h})\ =\ h\spec(L_{h})\,.
\end{equation}

\section{Uniform bounds in the dimension}\label{section.3}

This section is devoted to the establishment of  lower bounds on the
log-Sobolev constant $\rho_{h,N}$ (defined through~\eqref{logsobdef}) and upper bounds on the
spectral gap $\lambda_{h,N}$ (defined through~\eqref{specgapdef}), which are uniform in the system 
size~$N$. The main results here are the following.
\begin{theorem}[Lower Bound]\label{asymptoticlowerbound}

\noindent
For every $\delta>0$, 
there exists a positive constant $C_{\delta}$  such that the log-Sobolev constant
$\rho(h,N)$ satisfies
 \[     \forall      h>0 \   ,    \forall   N\in \mb N  \      ,     \   \      \  C_{\delta}  
   \ e^{-\frac{3+2\sqrt 2+\delta}{24h}}   \   
   e^{-\frac{1}{4h} }      \    \leq    \   \rho(h,N)     \  .        \]
\end{theorem}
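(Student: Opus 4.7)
The plan is to replace $V$ by a strictly convex surrogate $V_c$ obtained through a per-site modification of the single-particle potential $U(t) := \tfrac14(t^2-1)^2$, apply the Bakry-\'Emery criterion to $V_c$, and transfer the resulting log-Sobolev inequality back to $m_{h,N}$ using the Holley-Stroock perturbation principle. The obstruction is that $V$ is not convex, since $U$ is concave on $(-1/\sqrt 3, 1/\sqrt 3)$; and the key observation which makes a uniform-in-$N$ bound feasible is that the temperature scaling $hN$ in the Gibbs weight exactly absorbs the additive $N$-scaling of the oscillation of a per-site perturbation.

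For the convex surrogate, I would fix a parameter $a>1$ to be optimized later and define $U_c := U$ on $\{|t|\geq a\}$, extended through the unique $C^1$ parabolic interpolation on $[-a,a]$. An elementary computation gives $U_c(t) = \tfrac{a^2-1}{2}t^2 - \tfrac{a^4-1}{4}$ on $[-a,a]$, hence $U_c'' \geq a^2-1$ everywhere on $\mathbb R$, and $(U-U_c)(t) = \tfrac14(a^2-t^2)^2\,\mathbf{1}_{[-a,a]}(t)\geq 0$ with $\sup_{\mathbb R}(U-U_c) = a^4/4$. Setting $V_c(x) := \sum_{k=1}^N U_c(x_k) + \tfrac12\langle x, Kx\rangle$ and using $K\succeq 0$ (see~\eqref{eigenvaluesK}), one gets $\Hess V_c(x)\geq (a^2-1)\, I$ uniformly in $x\in\mathbb R^N$.

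From here I would apply Bakry-\'Emery to the convex Gibbs measure $m_{h,N}^c\propto e^{-V_c/(hN)}dx$, whose log-density has Hessian $\geq \tfrac{a^2-1}{hN}\,I$, to obtain
$$\ent_{m_{h,N}^c}(\varphi^2)\ \leq\ \frac{2hN}{a^2-1}\int|\nabla\varphi|^2\, dm_{h,N}^c\,,$$
i.e.\ a log-Sobolev constant $\geq a^2-1$ in the convention of~\eqref{logsobdef}. The perturbation $\psi := (V-V_c)/(hN) = (hN)^{-1}\sum_k (U-U_c)(x_k)$ is additive over sites, so $\mathrm{osc}(\psi) = N\cdot\mathrm{osc}(U-U_c)/(hN) = a^4/(4h)$, independently of $N$. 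The Holley-Stroock perturbation principle then yields
$$\rho(h,N)\ \geq\ (a^2-1)\,e^{-a^4/(4h)}\,.$$

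To conclude, given $\delta>0$, I would choose $a=a(\delta)$ with $a^4 := (9+2\sqrt 2 + \delta)/6 > 1$, which makes $a^4/4 = \tfrac14 + \tfrac{3+2\sqrt 2 + \delta}{24}$ match the target exponent exactly, and set $C_\delta := \sqrt{(9+2\sqrt 2 + \delta)/6} - 1 > 0$, depending only on $\delta$. The main delicacy is the trade-off between strict convexity (one needs $a>1$, hence $c_0 = a^2-1>0$, to run Bakry-\'Emery on $V_c$) and the Holley-Stroock oscillation penalty $a^4/(4h)$; the specific constant $(3+2\sqrt 2)/24$ emerges as the residual in the exponent under this parabolic convexification.
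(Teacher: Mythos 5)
Your proposal follows the same template as the paper's proof --- a per-site convexification of $V$, Bakry-\'Emery for the convex surrogate, Holley-Stroock to transfer back --- but your convexification is genuinely different and in fact more efficient. The paper modifies the single-particle double well into $\tfrac{\alpha}{4}t^4+\tfrac{\beta}{2}t^2$ inside a window and blends back to $U$ through a $C^2$ cutoff $\theta_n$; the subtracted polynomial vanishes only to \emph{first} order at the outer radius of the cutoff, so a sharp truncation would not even be $C^1$, and any $C^2$ cutoff has $\theta_n''$ dipping down to $-\tfrac{2}{(\sqrt2-1)^2}(1+\tfrac1n)$. Absorbing that negative contribution in Case~3 of Lemma~\ref{lemmaconvexperturbation2} is what forces $\alpha>\tfrac{1}{3(2-\sqrt2)^2+1}$ and inflates the per-site oscillation to $\tfrac{(1+\beta)^2}{4(1-\alpha)}\to\tfrac14+\tfrac{3+2\sqrt2}{24}$. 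Your subtracted polynomial $\tfrac14(t^2-a^2)^2\,\mathbf 1_{[-a,a]}(t)$ instead vanishes to \emph{second} order at $\pm a$, so the sharp cut already gives a $C^1$ surrogate with $U_c''\geq a^2-1$ a.e.\ and no negative singular part at the junctions, and the per-site oscillation is only $a^4/4$. Pushed to its limit this would even prove something stronger than the stated theorem: letting $a\downarrow1$ gives $\rho(h,N)\geq(\sqrt{1+4\delta}-1)\,e^{-(1+4\delta)/(4h)}$ for every $\delta>0$, i.e.\ the residual constant $\tfrac{3+2\sqrt2}{24}$ in the exponent can be replaced by any $\delta>0$. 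You chose $a$ so as to reproduce the paper's exponent, but you did not have to.

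The one real gap you leave open is regularity. Proposition~\ref{BakryEmery} as quoted requires $U\in\mathcal C^2$, while your $U_c$ is only $C^1$: $U_c''$ jumps from $a^2-1$ to $3a^2-1$ across $\pm a$. You therefore need either (i) a version of Bakry-\'Emery valid for uniformly convex potentials with merely Lipschitz gradient (true, e.g.\ by approximation or by Caffarelli's contraction theorem, but not what the paper states), or (ii) a $C^2$ replacement $\tilde U_c$ with $\tilde U_c''\geq a^2-1-\eta$ everywhere and $\sup_{\mathbb R}|U-\tilde U_c|\leq a^4/4+\eta$. Option (ii) is not automatic: a naive global mollification $U_c*\eta_\varepsilon$ fails, because $U$ is a quartic polynomial and $U-U_c*\eta_\varepsilon$ then has unbounded oscillation, destroying the Holley-Stroock step. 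One must either mollify only in an annulus around $\pm a$ while matching $U_c$ and $U_c'$ at both ends of the annulus --- possible precisely because the jump in $U_c''$ is \emph{upward} --- or convolve globally and subtract the explicit quadratic correction that makes $\tilde U_c$ agree with $U$ outside a neighbourhood of $[-a,a]$. Either route is short, but it should be written out rather than left implicit.
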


\begin{theorem}[Upper Bound]\label{asymptoticupperbound}

\noindent
The spectral gap $\lambda(h,N)$ satisfies for every $h>0$ and $N\in \mb N$ the inequality
\[     \lambda(h, N)    \   \leq      \  p (N)    \ e^{-\frac {1}{4h}} 
\    \big(   \   1   \  +   \   \epsilon(h,N)   \  \big)      \       ,        \]
where the prefactor $p(N)$ is given by \eqref{prefactor} and   the error term $\epsilon(h,N) $ satisfies 
\[    \exists\, C>0   \  \text{ s.t. }  \  \forall h\in (0,1]  \   ,   \    \forall N\in \mb N     \   ,     \   \   \    \       |\epsilon(h,N)|     \   \leq    \      C  \   h          \  .   \]
\end{theorem}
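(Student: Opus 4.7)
By the min-max characterization of the spectral gap applied to the Dirichlet form $hN\!\int\!|\nabla\varphi|^2\,dm_{h,N}$, it suffices to exhibit a non-constant test function $\psi$ whose Rayleigh quotient is at most $p(N)e^{-1/(4h)}(1+O(h))$ uniformly in $N$. Since the unique negative direction of $\hess V(O)=K-1$ is spanned by the constant vector $(1,\dots,1)\in\mathcal C$, the natural one-dimensional reaction coordinate is the mean $\bar x$ from~\eqref{projector}. I would therefore take $\psi(x)=\chi(\bar x)$ where $\chi:\mathbb R\to\mathbb R$ is a smooth odd function equal to $\pm 1$ outside a small fixed neighborhood of $0$, with $\chi'$ to be optimized. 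The identity $|\nabla\psi|^2=N^{-1}|\chi'(\bar x)|^2$ then reduces the Dirichlet form to $h\!\int\!|\chi'(\bar x)|^2\,dm_{h,N}$.

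\textbf{Numerator.} Disintegrate $m_{h,N}$ along the reaction coordinate to write $\int |\chi'(\bar x)|^2\,dm_{h,N}=\int_{\mathbb R}|\chi'(t)|^2\pi(t)\,dt$, where $\pi$ is the marginal density of $\bar x$. For $|t|\leq\delta$ the restriction of $V$ to the affine hyperplane $\{\bar x=t\}$ has a unique strict minimum by the uniform convexity of $V$ on $\mathcal C^\perp$ (cf.~\eqref{uniformconvexity}), which for $t=0$ is the saddle $O$ with transverse Hessian $(K-1)|_{\mathcal C^\perp}$. A Laplace expansion in the $N-1$ transverse directions, with quartic Taylor remainder uniformly bounded by Lemma~\ref{momentsgaussian} and the trace-class bound~\eqref{traceclass}, yields uniformly in $N$ and in $|t|\leq\delta$
\begin{equation*}
\pi(t)\ =\ \frac{\sqrt N}{Z_{h,N}}\,\frac{(2\pi hN)^{(N-1)/2}}{\sqrt{\det((K-1)|_{\mathcal C^\perp})}}\,e^{-\frac{1}{4h}}\,e^{\frac{t^2}{2h}}\,(1+O(h)).
\end{equation*}
Choosing $\chi'(t)\propto 1/\pi(t)$ truncated to $\{|t|<\delta\}$ then gives the Dirichlet identity $\int|\chi'|^2\pi\,dt=4\bigl(\int dt/\pi(t)\bigr)^{-1}$; carrying out the resulting one-dimensional Gaussian integration in the reciprocal, and inserting the sharp uniform-in-$N$ Laplace asymptotics of $Z_{h,N}$ from Subsection~\ref{subsectionZ} together with Lemma~\ref{lemmaconvergenceratiodet}, collapses the determinantal factor into the prefactor $p(N)$ and produces $h\!\int|\chi'(\bar x)|^2\,dm_{h,N}=p(N)e^{-1/(4h)}(1+O(h))$.

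\textbf{Denominator.} The symmetry $x\mapsto -x$ of $V$ and the oddness of $\chi$ force $\int\chi\,dm_{h,N}=0$, so $\mathrm{Var}_{m_{h,N}}(\psi)=\int\chi^2\,dm_{h,N}\geq 1-m_{h,N}(\{|\bar x|<\delta\})$. Since $V$ is uniformly convex on $\mathcal C^\perp$ and the minimum of $V$ on $\{\bar x=0\}$ is exactly $V(O)=N/4$, a Laplace expansion on the restricted hyperplane combined with the Gaussian tail bound of Lemma~\ref{easytail} yields $m_{h,N}(\{|\bar x|<\delta\})\leq C e^{-c/h}$ uniformly in $N$, hence $\mathrm{Var}_{m_{h,N}}(\psi)\geq 1-O(e^{-c/h})$. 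Dividing numerator by denominator gives the claimed upper bound on $\lambda(h,N)$.

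\textbf{Main obstacle.} Every step above is routine for fixed $N$; the novelty is uniformity as $N\to\infty$, since the transverse Gaussian integration is in dimension $N-1$ and classical semiclassical remainders (WKB, harmonic approximation) blow up with $N$. This is precisely where we must exploit the uniform convexity~\eqref{uniformconvexity} of $V$ on $\mathcal C^\perp$, the uniform trace-class bound~\eqref{traceclass} with its Gaussian moment and tail consequences (Lemmas~\ref{momentsgaussian}--\ref{easytail}), the convergence~\eqref{convergenceratiodet} of the determinant ratio between wells and saddle, and the uniform-in-$N$ Laplace asymptotics of the partition function $Z_{h,N}$ to be established in Subsection~\ref{subsectionZ}.
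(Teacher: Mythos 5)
Your proposal follows essentially the same strategy as the paper's Subsection~\ref{subsectionUB}: the same one-dimensional reaction coordinate $\bar x$, essentially the same quasimode (the paper takes $\chi'(\bar x)\propto e^{-\bar x^{2}/(2h)}$, the untruncated Gaussian; you take the truncated $\chi'\propto 1/\pi(t)$, which agrees with it to leading order), and the same uniform-in-$N$ ingredients — the trace-class bound~\eqref{traceclass} with its Gaussian moment and tail consequences, the determinant ratio convergence of Lemma~\ref{lemmaconvergenceratiodet}, and the sharp Laplace asymptotics of $Z_{h,N}$ from Subsection~\ref{subsectionZ}.

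The one genuine difference is computational. You disintegrate $m_{h,N}$ along $\bar x$ and Laplace-expand the transverse marginal $\pi(t)$, whereas the paper computes $hN\int|\nabla\chi|^2 e^{-V/(hN)}dx$ directly as a single perturbed $N$-dimensional Gaussian integral with covariance $Q=(2P+K-1)^{-1}$ (compare Proposition~\ref{quasimodal1}). The paper's direct route is cleaner in one respect: your claimed $(1+O(h))$ formula for $\pi(t)$ is not actually uniform in $|t|\leq\delta$ as written, because the expansion of $V(t\mathbf 1+z)$ produces a cubic cross term $t\sum_k z_k^{3}$, a $t$-dependent transverse Hessian $(K-1+3t^{2})|_{\mc C^{\perp}}$, and a quartic $\tfrac N4(t^{2}-1)^{2}$ on $\mc C$ rather than the pure quadratic $e^{-1/(4h)}e^{t^{2}/(2h)}$. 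These do not damage the final answer — the integral $\int dt/\pi(t)$ is dominated by $t=O(\sqrt h)$ where all these corrections are $O(h)$, and the tail $|t|\gtrsim 1$ is exponentially suppressed — but they would need to be tracked carefully to turn the sketch into a proof with the same $O(h)$ error as the paper's. By computing the full $N$-dimensional Gaussian directly, the paper reduces this bookkeeping to the single moment bound of Lemma~\ref{momentsgaussian} applied once. Your denominator estimate ($\mathrm{Var}_{m_{h,N}}(\psi)\geq 1-O(e^{-c/h})$ via symmetry and the exponential-smallness of $m_{h,N}(\{|\bar x|<\delta\})$) matches Remark~\ref{orthogonalquasimode} and Lemma~\ref{compunorm}.
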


\noindent
Note that, together with the well-known inequality $\rho(h,N)\leq \lambda(h,N)$,
which is easily obtained
by applying~\eqref{logsobdef} to $1+\ve u$ and letting $\ve\ra 0$,
Theorem~\ref{asymptoticlowerbound} and Theorem~\ref{asymptoticupperbound}
yield Theorem~\ref{th.rough}. 

\

\noindent
The proof
of Theorem~\ref{asymptoticlowerbound} is based on 
a careful perturbation of the energy $V$ and a combination of the
Holley-Stroock perturbation principle and the Bakry-\'{E}mery criterion for log-Sobolev constants
(c.f. Proposition~\ref{HSperprin} and Proposition~\ref{BakryEmery} below).
The proof of Theorem~\ref{asymptoticupperbound}  relies
on a suitable choice of a test function (or ``quasimode'') and exploits a good control on the normalisation constant   $Z_{h,N}:= \int_{\mb R^N}e^{-\frac{V(x)}{hN}} dx$.

\

\noindent
This section is organised as follows. Subsection~\ref{subsectionLB} contains  the proof
of Theorem~\ref{asymptoticlowerbound}. Subsection~\ref{subsectionZ}, 
which might also  be of independent interest, 
provides a sharp Laplace-type asymptotics for $Z_{h,N}$ when $h\ra 0$ with 
uniform  control in $N$. Finally, Subsection ~\ref{subsectionUB} contains the proof of Theorem~\ref{asymptoticupperbound}.

\subsection{Proof
of Theorem~\ref{asymptoticlowerbound} (Lower Bound on $\rho_{h,N}$)}\label{subsectionLB}

Our proof is based on a combination
of the following two well-known criteria for establishing lower bounds on the 
log-Sobolev constant
(see for example \cite[Prop. 3.1.18 and Theorem 3.1.29]{Ro}
or the original papers  \cite{HoSt},  \cite{BaEm}).  

\

\noindent
We fix here $N\in \mb N$ and use the following standard notation: for a measurable function $U:\mathbb R^N\ra\mathbb R$
such that $e^{-U}\in L^1(\mb R^N, dx)$,
 we define the probability measure $dm_U:= Z^{-1}_U e^{-U} dx$, 
where   $   Z_U  \   :=    \     \int_{\mb R^N} e^{-U}   \  dx   $ is the normalisation constant. 
Moreover we write for nonnegative $u\in \mc C_{{\rm c}}^\infty(\mb R^N;\mb R)$,
\[         \ent_{m_U} (u)    \   :=   \    \int_{\mb R^N}    u \log u   \  dm_U       \  -   \    \int_{\mb R^N} u\log \left( \int_{\mb R^N}  
u \ dm_{U}  \right) \ dm_{U}    \  ,      \]
and define $\rho_{U}$ as the largest positive constant
such that  $\forall u\in \mc C_{{\rm c}}^\infty(\mb R^N;\mb R)$,
\begin{equation}\label{standdeflogsob} 
           \rho_{U}   \    \ent(u^2)      \      \leq   \   2
 \       \int_{\mb R^N}  |\nabla u|^2   \      dm_{U}     \     .        
\end{equation}

\begin{proposition}[Holley-Stroock perturbation principle]\label{HSperprin}

\

\noindent
Let $U:\mathbb R^N\ra\mathbb R$
s.t. $e^{-U}\in L^1(\mb R^N, dx)$ and let $W:\mb R^N\ra \mb R$ be a bounded measurable function. Then 
\[      \rho_{U}      \   \geq     \       e^{- (\sup W - \inf W )}   \  \rho_{U+W}       \   . \] 
\end{proposition}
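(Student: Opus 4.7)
The plan is to use the variational characterization of $\rho_U$ obtained from~\eqref{standdeflogsob}, namely
\[
\rho_U\ =\ \inf_{u}\frac{2\int|\nabla u|^2\,dm_U}{\ent_{m_U}(u^2)},
\]
where the infimum is taken over $u\in \mc C_c^\infty(\mb R^N;\mb R)$ with $u^2$ not $m_U$-a.s. constant. Fixing such a $u$, the idea is to compare the numerator and the denominator with their analogues for $m_{U+W}$ via the explicit Radon--Nikodym relation
\[
dm_U\ =\ \frac{Z_{U+W}}{Z_U}\,e^{W}\,dm_{U+W}.
\]

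For the Dirichlet form, the comparison is immediate: since $|\nabla u|^2\geq 0$, the pointwise lower bound $e^{W}\geq e^{\inf W}$ yields
\[
\int|\nabla u|^2\,dm_U\ \geq\ \frac{Z_{U+W}}{Z_U}\,e^{\inf W}\int|\nabla u|^2\,dm_{U+W}.
\]
The delicate point is the entropy term, which is not a priori given as the integral of a nonnegative function, so the bound $e^{W}\leq e^{\sup W}$ cannot be inserted directly.

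The main ingredient to bypass this difficulty is the classical variational formula
\[
\ent_{m_U}(u^2)\ =\ \inf_{c>0}\int\!\big(u^2\log u^2-u^2\log c-u^2+c\big)\,dm_U,
\]
whose integrand is pointwise nonnegative (thanks to $x\log(x/c)\geq x-c$ for $x,c>0$) and whose infimum is attained at $c=\int u^2\,dm_U$. Choosing specifically $c:=\int u^2\,dm_{U+W}$ gives an equality for $m_{U+W}$ but only an upper bound for $m_U$. Inserting the Radon--Nikodym relation and now legitimately applying $e^{W}\leq e^{\sup W}$ inside the nonnegative integrand yields
\[
\ent_{m_U}(u^2)\ \leq\ \frac{Z_{U+W}}{Z_U}\,e^{\sup W}\,\ent_{m_{U+W}}(u^2).
\]

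Combining the two estimates, the factors $Z_{U+W}/Z_U$ cancel and one obtains
\[
\frac{2\int|\nabla u|^2\,dm_U}{\ent_{m_U}(u^2)}\ \geq\ e^{-(\sup W-\inf W)}\,\frac{2\int|\nabla u|^2\,dm_{U+W}}{\ent_{m_{U+W}}(u^2)},
\]
from which the announced inequality $\rho_U\geq e^{-(\sup W-\inf W)}\rho_{U+W}$ follows upon taking the infimum over $u$. The only genuinely non-routine step is the detour through the variational formula for the entropy, which restores the pointwise positivity needed to feed in the crude multiplicative bounds on $e^W$; everything else is a direct change-of-measure computation.
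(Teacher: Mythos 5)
Your proof is correct, and it is exactly the classical Holley--Stroock argument; the paper does not supply its own proof of Proposition~\ref{HSperprin} but merely cites Royer~\cite{Ro} and the original paper~\cite{HoSt}, where this is precisely the argument given. The only mildly non-routine ingredient — the variational representation
\[
\ent_{m}(g)\ =\ \inf_{c>0}\int\bigl(g\log g-g\log c-g+c\bigr)\,dm\,,
\]
with a pointwise nonnegative integrand — is indeed the crux, since it allows the crude bound $e^{W}\le e^{\sup W}$ to be fed into the entropy term; you have identified and used it correctly, and the rest is a straightforward change of measure with the ratio $Z_{U+W}/Z_U$ cancelling.
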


\begin{proposition}[Bakry-\'{E}mery criterion]\label{BakryEmery}
Let $U \in \mc C^2(\mb R^N)$ such that (in the sense of quadratic forms)
\[    \exists\, C>0 \ : \ \forall x\in \mb R^N    \   ,   \   \   \        \hess U(x)     \   \geq   \   C     \  .    \] 
Then the log-Sobolev constant satisfies
\[     \rho_U    \     \geq      \   C  .     \]
\end{proposition}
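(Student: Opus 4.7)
The plan is to prove the Bakry--\'Emery criterion via the classical $\Gamma_2$--semigroup interpolation method. Consider the diffusion semigroup $(P_t)_{t\geq 0}$ on $L^2(m_U)$ generated by the (essentially self-adjoint, nonpositive) operator $Lf:=\Delta f - \nabla U\cdot \nabla f$, which is the reversible generator associated to $m_U$. For $u\in \mc C_c^\infty(\mb R^N;\mb R)$ with $u\geq 0$, set $\phi_t := P_t(u^2)$, so that $\phi_0=u^2$ and $\phi_t \to \int u^2\,dm_U$ as $t\to\infty$.

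First I would establish the entropy dissipation formula. A direct computation, using $\int (Lf)g\,dm_U = -\int \nabla f\cdot\nabla g\,dm_U$ with $g=\log\phi_t$, gives the de Bruijn-type identity
\[
\frac{d}{dt}\int \phi_t \log \phi_t \,dm_U \;=\; -\int \frac{|\nabla \phi_t|^2}{\phi_t}\,dm_U .
\]
Integrating from $0$ to $\infty$ and using $\phi_\infty$ constant, this yields the representation
\[
\ent_{m_U}(u^2) \;=\; \int_0^\infty \!\!\int \frac{|\nabla P_t u^2|^2}{P_t u^2}\,dm_U\,dt .
\]

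Next, the heart of the argument is the pointwise gradient estimate
\[
|\nabla P_t f|^2 \;\leq\; e^{-2Ct}\,P_t(|\nabla f|^2),\qquad t\geq 0.
\]
This is obtained by differentiating $s\mapsto P_s\bigl(|\nabla P_{t-s}f|^2\bigr)$ along the semigroup and invoking the Bochner identity
\[
\Gamma_2(f) \;:=\; \tfrac12 L|\nabla f|^2 - \nabla f \cdot \nabla Lf \;=\; \|\hess f\|_{HS}^2 + \langle \nabla f, \hess U\,\nabla f\rangle,
\]
whose non-negative first term is discarded and whose second term is bounded below by $C|\nabla f|^2$ precisely by the hypothesis $\hess U\geq C$. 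A Gr\"onwall argument then yields the exponential contraction.

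Finally, applying this bound with $f=u^2$ together with the Cauchy--Schwarz inequality $|\nabla u^2|^2 = 4u^2|\nabla u|^2$ and the conditional Jensen-type identity $|\nabla P_t u^2|^2 \leq 4 P_t(u^2) \cdot P_t(|\nabla u|^2)$ (valid for the reversible diffusion), we get
\[
\frac{|\nabla P_t u^2|^2}{P_t u^2} \;\leq\; 4 e^{-2Ct}\,P_t(|\nabla u|^2),
\]
and substituting into the entropy representation followed by Fubini and the invariance $\int P_t g\,dm_U = \int g\,dm_U$ produces
\[
\ent_{m_U}(u^2) \;\leq\; \Bigl(4\int_0^\infty e^{-2Ct}dt\Bigr)\!\int |\nabla u|^2 dm_U \;=\; \frac{2}{C}\int |\nabla u|^2 dm_U,
\]
which is \eqref{standdeflogsob} with $\rho_U\geq C$. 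The main technical obstacle is the rigorous justification of the Bochner computation and of the Gr\"onwall argument under the sole assumption $U\in \mc C^2$: strictly speaking one needs enough regularity to differentiate $|\nabla P_t f|^2$ twice, which is standard in the diffusion setting but requires either a preliminary smoothing of $U$ followed by a stability argument, or invoking the abstract $\Gamma_2$--calculus framework (cf.\ \cite{BaGeLe}) that makes all manipulations meaningful on a core of smooth functions.
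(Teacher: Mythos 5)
The paper does not actually prove Proposition~\ref{BakryEmery}; it is quoted as a well-known result with references to Royer~\cite{Ro} and to the original sources~\cite{BaEm},~\cite{HoSt}. So there is no proof in the paper to compare against, only the correctness of your argument to assess.

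Your outline follows the classical $\Gamma_2$-semigroup route, and most of it is sound: the de Bruijn identity, the entropy representation
$\ent_{m_U}(u^2)=\int_0^\infty\int \frac{|\nabla P_t u^2|^2}{P_t u^2}\,dm_U\,dt$, and the Bochner computation showing $\Gamma_2(f)\geq C\,\Gamma(f)$ under $\Hess U\geq C$ are all correct. The gap is in the final combination step. You invoke two separate bounds: the \emph{$L^2$ gradient commutation} bound $|\nabla P_t f|^2\leq e^{-2Ct}P_t(|\nabla f|^2)$ (which your Bochner--Gr\"onwall argument does produce) and, independently, $|\nabla P_t u^2|^2\leq 4\,P_t(u^2)\,P_t(|\nabla u|^2)$. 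Neither of these, nor any straightforward combination of the two, yields the needed
\[
\frac{|\nabla P_t u^2|^2}{P_t u^2}\ \leq\ 4\,e^{-2Ct}\,P_t\big(|\nabla u|^2\big)\,.
\]
Indeed, from the first bound with $f=u^2$ you only get $|\nabla P_t u^2|^2\leq 4e^{-2Ct}\,P_t(u^2|\nabla u|^2)$, and there is no general comparison between $P_t(u^2|\nabla u|^2)$ and $P_t(u^2)P_t(|\nabla u|^2)$; the second bound has no exponential decay at all. Taking a ratio, a product, or a geometric mean of two upper bounds does not give you the best of both.

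What is actually needed is the \emph{strong} (pointwise $L^1$) gradient commutation bound
\[
|\nabla P_t f|\ \leq\ e^{-Ct}\,P_t\big(|\nabla f|\big)\,,
\]
applied to $f=u^2$, followed by Cauchy--Schwarz for the Markov kernel $P_t$: $P_t(u|\nabla u|)^2\leq P_t(u^2)\,P_t(|\nabla u|^2)$. That gives exactly the desired pointwise inequality, and the rest of your argument then goes through with the correct constant $\rho_U\geq C$ (matching normalization~\eqref{standdeflogsob}). The strong gradient bound is genuinely stronger than the $L^2$ version and is not an immediate consequence of the Bochner/Gr\"onwall argument you sketched for the latter; proving it from CD$(C,\infty)$ requires an extra step, e.g.\ differentiating $s\mapsto P_s\big(\sqrt{\Gamma(P_{t-s}f)+\varepsilon}\,\big)$ and letting $\varepsilon\to0$, or arguing via the dual formulation of $\Gamma_2\geq C\Gamma$. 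Alternatively one can bypass gradient commutation altogether and prove directly the exponential decay of Fisher information $I(P_t g)\leq e^{-2Ct}I(g)$ by differentiating $I(P_t g)$ and using $\Gamma_2\geq C\Gamma$. Either fix closes the gap; as written the step does not follow.
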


\noindent
In order to prove Theorem~\ref{asymptoticlowerbound}, we construct 
a suitable perturbation $W$ 
which added to our energy $V$ produces a strictly convex function. 
This is done as follows.   
First, for each $n\in \mb N$, we take  some even $\theta_n\in \mc C^2(\mb R;[0,1])$
satisfying 
\begin{equation}
\label{cutoffproperties0}
\theta_n(r)    \    =   \    \begin{cases}    1    \  \   \    \text{ if } |r| \leq 1 \\
0     \   \   \   \text{ if } |r|   \geq  \sqrt 2 
\end{cases}\!\!, \quad    \quad    \quad \theta_n'(r)\leq 0 \quad \text{if}\quad r\geq0\,, 
\end{equation}
and, for every $r\in\mathbb R$,
\begin{equation}\label{cutoffproperties}
\theta_{n}''(r) \geq - \frac{2}{(\sqrt 2-1)^2}  (1 +\frac1n)     \    .
\end{equation} 
This is indeed possible since, by elementary arguments, one can check that
 $$
\sup\!\left\{
\min_{r\in\mathbb R} f''(r)\, ;\, f \in \mc C^2(\mb R;[0,1])\  
\text{is even and satisfies \eqref{cutoffproperties0}}\!\right\}=-\frac{2}{(\sqrt 2-1)^2}\,.$$
Next, in order to ``convexify'' $V$ with  some as small as possible perturbation, 
we consider for every $n\in \mb N$, $\alpha\in (0,1) $ and $\beta>0 $ 
the family of perturbations 
\begin{gather}\label{familypert23}    \   W_{\alpha,\beta,n}(x)   \  :  =     \   
 \sum_k    \theta_n( c_{\alpha, \beta} x_k)   \    \Big(        
    \
   -  \   \frac{1-\alpha}{4}x_k^4   \  +  \  \frac{1+\beta}{2}  x_k^2    \    \Big)    \   -  \    \frac{N}{4}   \  ,  
     \end{gather}
 where   $c_{\alpha, \beta} := \sqrt{\frac{1-\alpha}{1+\beta}} $.  
 Since the polynomial part of  
 \eqref{familypert23} is nonnegative on $\supp \theta_n( c_{\alpha, \beta} \,\cdot) $, one gets from $0\leq\theta_{n}\leq1$
 the  
 two  bounds  
 \begin{equation}\label{boundedperturbation}    
-\frac N4    \   \leq   \     W_{\alpha, \beta, n}(x)    \    \leq    \   \frac N4  \ \Big( \ \frac{(1+\beta)^2}{1-\alpha} \  -  \ 1  \ \Big)     \  ,     \         
\end{equation}
valid for every  $n\in \mb N, \alpha\in (0,1), \beta>0$   and every $ x\in \mb R^N $.   Moreover, for a suitable choice of the parameters $\alpha, \beta$ and $n$, the  $W_{\alpha, \beta, n}$--perturbation
of the original energy $V$ becomes a uniformly strictly convex function:
 
  \begin{lemma}\label{lemmaconvexperturbation2}
Let $\alpha\in (\frac{1}{3(2-\sqrt2)^2+1}, 1)$. Then there exists $n_0\in \mb N$ such that for any $\beta>0$ and $n\geq n_0$, we have in the sense of quadratic forms:
\[          \exists C_{\alpha,\beta, n} >0 \  \text{s.t.}   \         \  \forall x\in \mb R^N\,,\ \forall N\in \mb N   \   ,  \    \    \        \hess \big(   V + W_{\alpha, \beta, n}  \big)   (x)   \   \geq     \   C_{\alpha,\beta,n }     \       .   \]
\end{lemma}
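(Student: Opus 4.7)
The plan is to exploit the separable structure of the perturbation. Since both the double-well part of $V$ and $W_{\alpha,\beta,n}$ depend coordinate-wise on $x$, writing $c := c_{\alpha,\beta}$ and
\[
U(r) \,:=\, \tfrac14 r^4 - \tfrac12 r^2 + \theta_n(cr)\big[-\tfrac{1-\alpha}{4}r^4 + \tfrac{1+\beta}{2}r^2\big]\,,
\]
one gets $(V+W_{\alpha,\beta,n})(x) = \sum_{k} U(x_k) + \tfrac12\langle x, Kx\rangle + \mathrm{const.}$, so that
\[
\hess\,(V+W_{\alpha,\beta,n})(x) \,=\, \diag\bigl(U''(x_k)\bigr)_{k} \,+\, K\,.
\]
Since $K\geq 0$ in the sense of quadratic forms, it suffices to exhibit a constant $\tilde C_{\alpha,\beta,n}>0$ with $U''(r)\geq \tilde C_{\alpha,\beta,n}$ for all $r\in\mathbb R$. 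This reduces the task to a one-dimensional estimate.

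I would then split according to the three zones determined by $\theta_n(cr)$. In the inner zone $|cr|\leq 1$, we have $\theta_n(cr)\equiv 1$ so by direct cancellation $U''(r) = 3\alpha r^2 + \beta\geq \beta$. In the outer zone $|cr|\geq\sqrt 2$, we have $\theta_n(cr)\equiv 0$ so $U''(r) = 3r^2 - 1 \geq 6\tfrac{1+\beta}{1-\alpha}-1$, which is positive since $\alpha\in(0,1)$ and $\beta>0$. The nontrivial case is the transition region $1<|cr|<\sqrt 2$.

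In that region, set $h(r):=-\tfrac{1-\alpha}{4}r^4 + \tfrac{1+\beta}{2}r^2$, so that
\[
U''(r) \,=\, 3r^2 - 1 + c^2\theta_n''(cr)\,h(r) + 2c\,\theta_n'(cr)\,h'(r) + \theta_n(cr)\,h''(r)\,.
\]
The plan is to observe three sign facts:  for $r$ positive and larger than $1/c$, both $h'(r)$ and $\theta_n'(cr)$ are $\leq 0$, so the cross term is nonnegative; $h\geq 0$ on the zone with maximum $h(1/c)=\tfrac{(1+\beta)^2}{4(1-\alpha)}$, and combining with the lower bound $\theta_n''\geq -\tfrac{2}{(\sqrt 2 -1)^2}(1+\tfrac1n)$ and $c^2 = \tfrac{1-\alpha}{1+\beta}$, the $\theta_n''$-term is bounded below by $-\tfrac{(1+\beta)(1+1/n)}{2(\sqrt 2 -1)^2}$; and since $h''\leq 0$ on the zone and $0\leq\theta_n\leq 1$, we have $\theta_n(cr)h''(r)\geq h''(r) = -3(1-\alpha)r^2 + (1+\beta)$. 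Collecting terms and using the pointwise lower bound $r^2\geq 1/c^2 = \tfrac{1+\beta}{1-\alpha}$, one obtains
\[
U''(r) \,\geq\, (1+\beta)\!\left[\frac{3\alpha}{1-\alpha} - \frac{1+\tfrac1n}{2(\sqrt 2 -1)^2}\right] + \beta\,.
\]

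The final step is a short algebraic manipulation: the condition $\alpha > \tfrac{1}{3(2-\sqrt 2)^2 + 1}$ is equivalent, using $3(2-\sqrt 2)^2 = 6(\sqrt 2-1)^2$, to $\tfrac{3\alpha}{1-\alpha} > \tfrac{1}{2(\sqrt 2-1)^2}$, which is precisely the threshold making the bracket above positive once $n$ is chosen large enough to absorb the $1/n$ correction. Choosing $n_0$ accordingly, both summands above are strictly positive for every $\beta>0$, yielding the required uniform lower bound on $U''$. The main obstacle is the intermediate-zone estimate, because the negativity of $\theta_n''$ multiplies the non-zero quantity $h$ there; the analysis is delicate precisely because it is both the requirement that $\theta_n$ be $C^2$ and the worst-case behaviour as $\beta\to 0^+$ (when the single-particle well provides no baseline convexity) that fix the sharp threshold on $\alpha$.
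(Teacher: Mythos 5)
Your proof is correct and follows essentially the same route as the paper's: drop the nonnegative interaction $K$, reduce to a one-dimensional second-derivative bound, and run the same three-zone case analysis with the identical sign observations (cross term nonnegative, $\theta_n''$ bounded below, $\theta_n h''\geq h''$), arriving at the same threshold for $\alpha$. The only (cosmetic) difference is that you bound the $\theta_n''$-term by the constant $h(1/c)=\frac{(1+\beta)^2}{4(1-\alpha)}$ via monotonicity of $h$, whereas the paper uses the pointwise bound $c^2 h(r)\leq\frac{1-\alpha}{4}r^2$; both yield the same final inequality.
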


\begin{proof}Recalling the definition \eqref{defV}  of   $V$ and that  
 the discrete Laplacian $\mu K$ is nonnegative we get the estimate
\[    \forall x \in\mathbb R^N  \ ,   \   \forall \alpha, \beta>0  ,\  \forall n\in\mb{N}      \ ,    \   \    \     \hess \big(   V + W_{\alpha, \beta, n}  \big)   (x)    \    \geq       \    \hess   U_{\alpha, \beta, n}(x)     \   ,    \]
where
\[     U_{\alpha, \beta, n}     \   :=    \      \frac 14 \sum_k 
\big( \  1-   (1-\alpha)\theta_{n}(
c_{\alpha, \beta} x_k)\  \big)  \    x_k^4   \   -    \   \frac 12 \sum_k \big( \  1-(1+\beta)\theta_{n}(
c_{\alpha, \beta} x_k) \ 
\big)   \    x^2_k     \   . 
\]
The Hessian of $U_{\alpha, \beta,n}$ is diagonal and we have,
for any $k\in\{1,\dots,N\}$: 
\begin{align*}     \partial^2_k U_{\alpha, \beta, n}    \   (x)      \  =   \    &    
 \underbrace{ c_{\alpha, \beta}^2  \theta_n''( c_{\alpha, \beta} x_k)   \    \Big(        
      \  
     -  \   \frac{1-\alpha}{4}x_k^4   \  +  \  \frac{1+\beta}{2}  x_k^2    \    \Big)}_{\text{I}}   \   +   \\
      &   \   +    \
     \underbrace{ 2c_{\alpha, \beta} \theta_n'(c_{\alpha, \beta} x_k)   \   \    \Big(        
    -  \  (1-\alpha) x_k^3   \  +  \  (1+\beta)  x_k    \    \Big)}_{\text{II}}      \   +  \\
    &  \ 
    +    \ 
          \underbrace{ \theta_n(c_{\alpha, \beta} x_k)     \    \Big(        
    -  \   3 (1-\alpha)x_k^2   \  +  \  (1+\beta)    \    \Big)    \   +   \
     3 x_k^2   \  -  \  1 }_{\text{III}}  \   .
    \end{align*}

\noindent
{\it Case 1}:  $|c_{\alpha, \beta} x_k| >  \sqrt 2$. 

\noindent
Then $\theta_n=\theta_n'=\theta_n''=0$ for every $n\in \mb N$ and  we obtain
\[ \forall \alpha, \beta>0    \  ,     \    \    \   \partial_k^2    U_{\alpha, \beta, n}(x)    \   =    \   3x_k^2   \   -    \   1   \    \geq    \   \frac{6(1+\beta)}{1-\alpha}
\  -   1   \    \geq   \   5   \  .   \]

\

\noindent
{\it Case 2}:  $|c_{\alpha, \beta, n} x_k| <  1$. 

\noindent
Then $\theta_n'=\theta_n''=0$, $\theta_n = 1$  for every $n\in \mb N$ and  we obtain
\[\forall \alpha, \beta>0    \  ,     \    \    \ \partial_k^2    U_{\alpha, \beta, n}(x)    \   =   \ 
 \    3\alpha x_k^2   \   +   \beta     \    \geq    \   \beta      \  .   \]

\

\noindent
{\it Case 3}:  $c_{\alpha, \beta} x_k \in [1,\sqrt2]$.  

\noindent
First, for every $\beta>0, \alpha\in (0,1)$, $n\in \mb N$, we have, using $\theta_n'(c_{\alpha, \beta}x_k)\leq 0 $ (see indeed~\eqref{cutoffproperties0}) and
$\big(    -    (1-\alpha) x_k^3   +    (1+\beta)  x_k      \big) \leq 0$,  that $\text{II} \geq 0$.\\
Moreover, we deduce from $\big(   3 (1-\alpha)x_k^2   -    (1+\beta)\big)  \geq 0$ that
 the term $\text{III}$ satisfies
\[     \text{III}    \   =   \  
    ( 1 -  \theta_n(c_{\alpha, \beta} x_k))     \    \Big(        
      \   3 (1-\alpha)x_k^2   \  -  \  (1+\beta)    \    \Big)    \   +   \
     3 \alpha x_k^2 +\beta   \ 
     \geq\   3 \alpha x_k^2 +\beta  \   .   \]

\

\noindent
Let us lastly look at the term $\text{I}$. 
Since 
$\big(   -    \frac{1-\alpha}{4}x_k^4    +   \frac{1+\beta}{2}  x_k^2    \big)\geq0$, we have 
\begin{gather*} 0\ \leq\  c_{\alpha, \beta} ^2   \    \Big(        
      \  
     -  \   \frac{1-\alpha}{4}x_k^4   \  +  \  \frac{1+\beta}{2}  x_k^2    \    \Big)    \   =   \  
      x_k^2    \   c_{\alpha, \beta} ^2  \    \Big(        
      \  
     -  \   \frac{1-\alpha}{4}  x_{k}^2   \  +  \  \frac{1+\beta}{2}     \    \Big)     \\
     \leq   \    x_k^2    \      \Big(        
      \  
     -  \   \frac{1-\alpha}{4}     \  +  \   \frac{1-\alpha}{2}     \    \Big)   \    =     \   
        \   \frac{1}{4}  (1-\alpha)   \  x_k^2
            \  ,  
     \end{gather*}
and so, since $\theta_n''(c_{\alpha, \beta}  x_k)\geq \frac{-2}{(\sqrt 2-1)^2}(1 +   \frac1n)$
   according to~\eqref{cutoffproperties},
\[   \text{ I}    \       \geq    \   -  (1 +\frac1n) \frac{1-\alpha}{(2-\sqrt2)^2}    \  x_k^2  
   \,. \]
Summing up,  we then have in {\it Case 3}:
\begin{gather}
\label{estimatealphabetan}
\forall \alpha, \beta>0  \  ,     \    \    \ \partial_k^2    U_{\alpha, \beta, n}(x)    
  \geq     \   
     \frac{\alpha\big(3(2-\sqrt2)^2+1 +\frac1n \big) -
 1 -\frac1n}{(2-\sqrt2)^2}   \  x_{k}^2+\beta \,.
    \end{gather}
If $\alpha>\frac{1}{3(2-\sqrt2)^2+1}$ as in the assumption, there exists $n_0\in \mb N$ such that
the right hand side of~\eqref{estimatealphabetan} is bigger than $\beta$
and hence strictly positive for any $n\geq n_{0}$.

\

\noindent 
The case  $c_{\alpha, \beta} x_k \in [-2,-1]$ 
can be treaten in an analogous way and thus the lemma is proven. 
\end{proof}

 \noindent
 The proof of Theorem~\ref{asymptoticlowerbound} can now be easily concluded: for any $\delta>0$, taking some fixed
   $\alpha\in (\frac{1}{3(2-\sqrt2)^2+1},1)$, 
   we have for $\beta>0$ sufficiently small,
  \begin{equation} \label{deltaalphabeta}  \frac{1+\delta}{4} +\frac{3+2\sqrt 2}{24} \ =\ 
  \frac{3(2-\sqrt2)^2(1+\delta)+1}{12(2-\sqrt2)^2}
   \   \geq     \      \frac{(1+\beta)^2}{4(1-\alpha)}     \  .    \end{equation}
 According to Lemma~\ref{lemmaconvexperturbation2}, fixing
$n$ sufficiently large, there exists a $C_{\delta}>0$ such that
the perturbation $W_{\alpha, \beta, n}$ defined in~\eqref{familypert23}
satisfies uniformly with respect to 
the dimension~$N\in\mathbb N$ and
to $x\in\mathbb R^N$:
\[     \hess (V + W_{\alpha, \beta, n})  (x)  \   \geq  \   C_{\delta}  \  .   \]
Moreover, by estimates~\eqref{boundedperturbation} and~\eqref{deltaalphabeta}, 
\begin{equation*}
     \sup_{x}\frac{W_{\alpha, \beta, n}(x)}{Nh}    \   -   \  
     \inf_{x}  \frac{ W_{\alpha, \beta, n}(x)}{Nh}    \  \leq    \ 
   \frac{1+\delta}{4} +\frac{3+2\sqrt 2}{24}   \     .    \end{equation*}
Applying the perturbation principle as stated in Proposition~\ref{HSperprin}
with $U=\frac{V}{hN}$ and $W= (W_{\alpha, \beta, n})/hN $ yields therefore
 \[     \rho_{V/hN}    \    \geq     \    \frac{C}{hN} \   e^{-\frac{1+\delta}{4h} -\frac{3+2\sqrt 2}{24h}}        \        .   \]
Noting that the rescaled log-Sobolev constant $\rho(h,N)$ as defined in~\eqref{logsobdef}
satisfies 
\[       \forall       h>0 \ ,  \   \forall  N\in \mb N  \      ,     \   \      \       \rho(h,N)   \     =   \    hN
 \rho_{V/hN}      \       ,   \]
we get the statement of Theorem~\ref{asymptoticlowerbound}.

\subsection{Computation of the normalisation constant $Z_{h,N}$}\label{subsectionZ}

\noindent
To obtain a good quantitative upper bound,
we are lead to compute precise Laplace asymptotics. 
Similar computations are done in \cite{BaBoMe} exploiting the Hausdorff-Young inequality. We follow a different route 
based
on a comparison with a suitable quadratic form (see \eqref{quadraticbelow} below)
and
giving better error estimates.\\

\noindent
Note first the
expressions for $V$ shifted to the minima,
\noindent
\begin{equation} \label{shiftedV} V(x  + I_{\pm})     \   =    \      \frac 14\|x\|_4^4   \     \pm
\   \sum_k x_k^3     \  +     \  \frac 12 \lp  x,(K +2) x \rp     \   ,   \end{equation}

\noindent
and let $Q$ be the  following operator that will be used to control $V$ from below in the rest of this subsection:
\begin{equation}\label{defQ}  
 Q  \    :  =   \  \big(  \    \frac 32 P     \   +    \    K    \    -1   \   \big)^{-1}         \   .  
 \end{equation}
 
\noindent
This linear operator then satisfies in particular
 \begin{equation}\label{detofQ}
\det Q^{-1}   \  =   \   \frac 12 |\det (K -1)| \    =    \    \frac 12  |\det \hess V(0)|      \   .   
\end{equation}

\begin{lemma} \label{lemmaquadraticbelow}
Let $Q:\mb R^N\ra \mb R^N$ be the linear operator
defined through equation~\eqref{defQ}. Then the following two estimates hold:
\begin{equation}\label{quadraticbelow}
\forall \, \overline x \geq -1    \    ,     \     \     V(x+I_+)      \   \geq     \   
  \frac 12   \lp  x, Q^{-1} x\rp           \       ,  
\end{equation}
and
\begin{equation}\label{quadraticbelowabs}  
\forall\,  |\overline x| \leq 1    \    ,      \      \    \frac 14 \|x\|_4^4   \   -    \   \big|\sum_k x_k^3\big|   \  +  \   \frac12\lp x,( K + 2)x \rp       \   \geq      \   
 \frac 12   \lp  x, Q^{-1}x\rp  
      \     .
\end{equation}
\end{lemma}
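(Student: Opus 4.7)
The plan is to prove \eqref{quadraticbelow} by an explicit algebraic decomposition, and then deduce \eqref{quadraticbelowabs} from \eqref{quadraticbelow} by the symmetry $V(-y)=V(y)$.

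\medskip

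\emph{Step 1 (Reduction to an algebraic identity).} I would split every $x\in\mathbb R^N$ as $x = \overline x \, I_+ + y$ with $y := P^\perp x$, so that $\sum_k y_k=0$ and $\langle x, Kx\rangle = \langle y, Ky\rangle$ (since $K$ annihilates constants). Using the definition \eqref{defQ} of $Q$, one has $\tfrac{1}{2}\langle x, Q^{-1}x\rangle = \tfrac{3}{4}N\overline x^{\,2} + \tfrac{1}{2}\langle x,(K-1)x\rangle$. Combined with the expression \eqref{shiftedV} for $V(x+I_+)$ and the identity $\tfrac{1}{2}\langle x,(K+2)x\rangle - \tfrac{1}{2}\langle x,(K-1)x\rangle = \tfrac{3}{2}\|x\|^2$, this reduces \eqref{quadraticbelow} to proving
\begin{equation*}
D(x)\ :=\ \tfrac{1}{4}\|x\|_4^{4}\ +\ \sum_k x_k^{3}\ +\ \tfrac{3}{2}\|y\|^{2}\ +\ \tfrac{3}{4}N\overline x^{\,2}\ \geq\ 0 \qquad\text{for }\overline x\geq -1.
\end{equation*}

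\medskip

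\emph{Step 2 (Explicit factorization).} Substituting $x_k = \overline x + y_k$ and using $\sum_k y_k=0$, I would expand $\sum_k x_k^3$ and $\sum_k x_k^4$ in terms of $\overline x$ and the $y_k$'s, and after regrouping obtain
\begin{equation*}
D(x)\ =\ \tfrac{N}{4}\,\overline x^{\,2}(\overline x+1)(\overline x+3)\ +\ \tfrac{1}{4}\sum_{k=1}^{N} y_k^{2}\!\left[\,y_k^{2} + 4(\overline x+1)\, y_k + 6(\overline x+1)^{2}\,\right].
\end{equation*}
Setting $a:=\overline x+1\geq 0$, the first summand becomes $\tfrac{N}{4}(a-1)^{2}\,a\,(a+2)$, which is manifestly nonnegative since $a,\,a+2\geq 0$. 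For each $k$, the inner quadratic $t\mapsto t^{2}+4at+6a^{2}$ has discriminant $16a^{2}-24a^{2}=-8a^{2}\leq 0$ and positive leading coefficient, so it is nonnegative for all $t\in\mathbb R$. Hence $D(x)\geq 0$, proving \eqref{quadraticbelow}.

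\medskip

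\emph{Step 3 (From \eqref{quadraticbelow} to \eqref{quadraticbelowabs} by symmetry).} Since $V(-y)=V(y)$, applying \eqref{quadraticbelow} with $x$ replaced by $-x$ (which is allowed whenever $\overline{-x}=-\overline x \geq -1$, i.e.\ $\overline x\leq 1$) yields
\begin{equation*}
\tfrac{1}{4}\|x\|_4^{4}\ -\ \sum_k x_k^{3}\ +\ \tfrac{1}{2}\langle x,(K+2)x\rangle\ \geq\ \tfrac{1}{2}\langle x,Q^{-1}x\rangle.
\end{equation*}
On the set $\{|\overline x|\leq 1\}$ both this inequality and the original \eqref{quadraticbelow} hold. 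Since $-|\sum_k x_k^{3}| = \min\bigl(\sum_k x_k^{3},\,-\sum_k x_k^{3}\bigr)$, taking the smaller of the two left-hand sides gives exactly \eqref{quadraticbelowabs}.

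\medskip

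The main obstacle is Step~2: one has to guess the right algebraic reorganization. The key observation is that the ``bad'' cubic cross-term $(\overline x+1)\sum_k y_k^{3}$ produced by the expansion can be absorbed, together with $\tfrac{3}{2}(\overline x+1)^{2}\|y\|^{2}$ and $\tfrac{1}{4}\sum_k y_k^{4}$, into a sum of per-coordinate quadratics in $y_k$ with nonpositive discriminant -- this is what allows a purely pointwise, dimension-free bound. The remaining ``diagonal'' piece is then a simple polynomial in $\overline x$ that factors nicely and is nonnegative precisely under the assumption $\overline x\geq -1$.
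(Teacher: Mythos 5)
Your proof is correct, and for the main inequality \eqref{quadraticbelow} it takes a genuinely different route from the paper. You expand everything in the mean/fluctuation variables $x_k=\overline x+y_k$ and verify positivity coordinatewise: the ``diagonal'' piece factors as $\tfrac N4(a-1)^2a(a+2)$ with $a=\overline x+1\ge0$, and the fluctuation piece is a sum of per-coordinate quadratics $y_k^2\bigl(y_k^2+4ay_k+6a^2\bigr)$ with nonpositive discriminant $-8a^2$; I checked the expansion and the regrouping, and both are exact. The paper instead never separates the quartic term into mean and fluctuation parts: it applies H\"older's inequality $\|x\|_4^4\ge N\overline x^4$ to reduce to the one-dimensional bound $\tfrac14\|x\|_4^4-\tfrac N2\overline x^2+\tfrac N4\ge\tfrac N4(\overline x-1)^2(\overline x+1)^2$, obtains the global lower bound $V(x)\ge\tfrac N4(\overline x-1)^2(\overline x+1)^2+\tfrac12\lp x,(P+K-1)x\rp$, and then shifts to $I_+$, using that $P+K-1$ annihilates constants and that $(\overline x+2)^2\ge1$ on $\{\overline x\ge-1\}$. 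The paper's argument is shorter and isolates a reusable intermediate estimate (its inequality \eqref{potentialestimate} is also what drives the tail bound in Proposition~\ref{tailpropN}); yours is more self-contained and makes transparent exactly how the cubic cross-term $(\overline x+1)\sum_ky_k^3$ is absorbed, at the cost of requiring the right algebraic regrouping to be guessed. Your Step 3 (deducing \eqref{quadraticbelowabs} from \eqref{quadraticbelow} and its reflection $x\mapsto-x$ on the overlap $\{|\overline x|\le1\}$) is essentially the same symmetry argument as in the paper.
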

\begin{proof}
Note first the following estimate implied by H\"older's inequality:\begin{equation}\label{basicinequality}
\forall x\in \mb R^N    \  ,   \   \  \   \|x\|_4^4     \   \geq   \         N  \overline x^4     \   . 
\end{equation}
It follows that 
\[    \frac 14 \|x\|_4^4   \   -  \   \frac N2 \overline x^2     \  +   \   \frac N4  \   \geq   \  
  \frac N4 \overline x^4   \   -  \   \frac N2 \overline x^2   \  +   \frac N4  \
  =   \     \frac N4 (\overline x -1)^2(\overline x+1)^2       \ ,    \]
  and therefore
  \begin{eqnarray}    V(x)    \  &\geq&   \    \frac N4 (\overline x -1)^2(\overline x+1)^2     \   +
  \ \frac 12 \lp  x,  ( K-1) x \rp 
     \   +  \   \frac N2 \overline x^2   \       \nonumber  \\
     &=& \   
       \frac N4 (\overline x -1)^2(\overline x+1)^2     \   +
  \ \frac 12 \lp  x ,  (P + K-1)  x \rp    
      \    ,        \label{potentialestimate}
     \end{eqnarray}
where the last inequality follows from the relation $N\overline x^2  =    \lp x, Px
\rp$.\\    
From~\eqref{potentialestimate}, since $P + K-1$ annihilates constants,  
we get for the shifted potential  the estimate   
\begin{equation}
V(x+I_+)      \   \geq     \    \frac N4 \overline x^2   \  +  \    
 \frac 12 \lp  x,  (P +  K-1) \ x \rp      \   \  \  \text{ for }     \overline x \geq -1    \   ,  
\end{equation}
which proves~\eqref{quadraticbelow}. 
Note moreover that~\eqref{potentialestimate} also gives
\begin{equation}\label{quadraticbelow2}
\forall \, \overline x \leq 1    \    ,     \     \     V(x+I_-)      \   \geq     \   
  \frac 12   \lp  x, Q^{-1} x\rp           \       ,  
\end{equation}
which is actually equivalent to~\eqref{quadraticbelow} due to the symmetry of $V$.
Estimate~\eqref{quadraticbelowabs} is then an immediate consequence
of the expressions for $V(x+I_+)$ and $V(x+I_-)$ given in~\eqref{shiftedV}
and~\eqref{quadraticbelow},~\eqref{quadraticbelow2}. 
\end{proof}

\begin{proposition}\label{tailpropN}
For every $r>0$ there exists a constant $C>0$ such that 
for each $h\in (0, 1]$ and $N\in \mb N$,
\begin{equation}\label{taillargeN}     \int_{\{\overline x\geq 0\,;\,|\overline x - 1|\geq r \}}   e^{-\frac{V(x)}{hN}}   \  dx    \  
\leq    \       C^{-1}       \       \frac{(2\pi hN)^{\frac N2}}{|\det \hess V(I_+)|^{\frac 12}}      \   e^{-\frac Ch}      \ .   
\end{equation}
\end{proposition}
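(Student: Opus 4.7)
\emph{Proof plan.} The strategy is to shift the integration variable so as to bring the minimum $I_+$ to the origin, use the convex quadratic lower bound from Lemma~\ref{lemmaquadraticbelow} to reduce the problem to a Gaussian tail estimate, and finally invoke the uniform convergence result of Lemma~\ref{lemmaconvergenceratiodet} to control normalization factors uniformly in $N$.

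Concretely, I would first change variables $y:=x-I_+$, so that the integration region $\{\overline{x}\geq 0,\,|\overline{x}-1|\geq r\}$ becomes $\{\overline{y}\geq -1,\,|\overline{y}|\geq r\}$. Since on this region $\overline{y}\geq -1$, the bound \eqref{quadraticbelow} of Lemma~\ref{lemmaquadraticbelow} applies and yields
$$
V(y+I_+)\,\geq\,\tfrac{1}{2}\langle y,Q^{-1}y\rangle,\qquad Q=\bigl(\tfrac{3}{2}P+K-1\bigr)^{-1},
$$
so that
$$
\int_{\{\overline{x}\geq 0,\,|\overline{x}-1|\geq r\}}\!\!e^{-\frac{V(x)}{hN}}dx\,\leq\,\int_{\{|\overline{y}|\geq r\}}e^{-\frac{\langle y,Q^{-1}y\rangle}{2hN}}dy.
$$
The operator $Q$ has the form \eqref{covariance} with $\alpha=3/2$, $\beta=-1$, so that $\alpha+\beta=1/2>0$ and $\mu+\beta=\mu-1>0$, and Lemma~\ref{easytail} is applicable. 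It gives a bound of the form
$$
\int_{\{|\overline{y}|\geq r\}}e^{-\frac{\langle y,Q^{-1}y\rangle}{2hN}}dy\,\leq\,\bigl((2\pi hN)^{N}\det Q\bigr)^{\tfrac12}\bigl(\tfrac{h}{\pi C(r)}\bigr)^{\tfrac12}e^{-\frac{C(r)}{h}},\quad C(r)=\tfrac{r^2}{4}.
$$

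The remaining task, and the only real point where one must be careful about uniformity in $N$, is to compare $\det Q$ with $|\det\hess V(I_+)|^{-1}=\det(K+2)^{-1}$. Using that $K$ is diagonal in Fourier basis and $P$ is the projector onto its kernel (see \eqref{eigenvaluesK}--\eqref{projector}), a direct computation gives
$$
\det Q\cdot\det(K+2)\,=\,\det\bigl(Q(K+2)\bigr)\,=\,4\prod_{k=1}^{N-1}\frac{\nu_k+2}{\nu_k-1}.
$$
By Lemma~\ref{lemmaconvergenceratiodet} this last product converges as $N\to\infty$, and is in particular bounded by some constant $M$ independent of $N$. Hence $\det Q\leq M/|\det\hess V(I_+)|$ uniformly in $N$. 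Plugging this back and absorbing the benign prefactor $(h/(\pi C(r)))^{1/2}\leq (\pi C(r))^{-1/2}$ (valid for $h\in(0,1]$) into a slight reduction of the exponential constant, one obtains the claimed bound with some suitable $C>0$.

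The computation is essentially routine; the only substantive ingredient that could have been an obstacle is the uniform control of the determinant ratio as $N\to\infty$, which is precisely what Lemma~\ref{lemmaconvergenceratiodet} provides. Note that the hypothesis $\mu>1$ enters crucially here, both to ensure that $K-1$ is invertible on $\mathcal{C}^\perp$ (so that $Q$ is well-defined) and to guarantee the convergence of the product $\prod_k (\nu_k+2)/(\nu_k-1)$.
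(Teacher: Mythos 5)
Your proposal is correct and follows essentially the same route as the paper: shift to the minimum $I_+$, apply the quadratic lower bound \eqref{quadraticbelow} of Lemma~\ref{lemmaquadraticbelow}, invoke the Gaussian tail estimate of Lemma~\ref{easytail} for $Q=(\tfrac32 P+K-1)^{-1}$, and control the resulting determinant ratio uniformly in $N$ via Lemma~\ref{lemmaconvergenceratiodet}. The only cosmetic difference is that you recompute the product $\det Q\cdot\det(K+2)$ directly, whereas the paper passes through the identity $\det Q^{-1}=\tfrac12|\det\hess V(0)|$ and then cites \eqref{convergenceratiodet}; the content is identical.
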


\

\begin{proof} Fix $r>0$. Shifting the origin to the minimum $I_+$ and using the quadratic lower bound given in~\eqref{quadraticbelow} of Lemma~\ref{lemmaquadraticbelow}, we get
\[     I      \  : =    \  \int_{\{\overline x\geq 0\,;\,|\overline x - 1|\geq r \}}   e^{-\frac{V(x)}{hN}}   \  dx      \   \leq    \  
\int_{\{\overline x\geq -1\,;\,|\overline x |\geq r \}}   e^{-\frac{\lp x, (hNQ)^{-1}x\rp}{2}}   \  dx      \   ,   \]
where $Q:\mb R^N\ra \mb R^N$ is the positive operator defined in~\eqref{defQ}. 
According to the Gaussian tail estimate of Lemma~\ref{easytail}, 
there exists a constant $C>0$ such that for every $h\in (0,1]$
and $N\in \mb N$, 
\begin{equation*}    I   \  \leq     \     C^{-1}   \frac{ \  (2\pi hN)^{\frac N2}}{\big(\det Q^{-1}\big)^{\frac 12}}      e^{-\frac Ch}            \  .   
\end{equation*}
The desired result follows now from \eqref{detofQ} and 
the convergence of the ratio of determinants
given by~\eqref{convergenceratiodet} of  Lemma~\ref{lemmaconvergenceratiodet}. 
\end{proof}

\begin{proposition}\label{laplaceprop}
Let $r\in(0,1]$. Then  
\begin{equation}\label{laplace}    
\int_{\{|\overline x-1|\leq r\}}   e^{-\frac{V(x)}{hN}}   \  dx    \     =       \       \frac{(2\pi hN)^{\frac N2}}{|\det \hess V(I_+)|^{\frac 12}}     \    \big(   \     1
+   \    \epsilon_r(h,N)     \   \big)    \         ,   
\end{equation}
where the error term $\epsilon_r(h,N) $ satisfies 
\begin{equation*}   
   \exists\, C=C(r)>0 \  \text{ s.t. } \  \forall h\in (0,1]  \   ,   \    \forall N\in \mb N     \   ,     \   \   \    \       |\epsilon_r(h,N)|     \   \leq    \      C \   h          \  .   
\end{equation*}

\end{proposition}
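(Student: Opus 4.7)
The plan is to translate to the minimum by setting $y := x - I_+$ and exploit the natural splitting $\mathbb R^N = \mathcal C \oplus \mathcal C^\perp$, which separates the ``soft'' mean direction (where $V$ is flat along the diagonal at the bottom of the well) from the orthogonal, uniformly convex modes (recall \eqref{uniformconvexity}). Writing $y = c\,\mathbf 1 + y^\perp$ with $c = \overline y$ and $y^\perp \in \mathcal C^\perp$, a direct expansion using $K\mathbf 1 = 0$ and $\sum_k y_k^\perp = 0$ gives
\begin{equation*}
V(c\mathbf 1 + y^\perp + I_+) \ = \ \frac{N}{4}\bigl((1+c)^2-1\bigr)^2 + \frac{1}{2}\bigl\langle y^\perp, \bigl(K + 3(1+c)^2 - 1\bigr) y^\perp\bigr\rangle + R_c(y^\perp)\,,
\end{equation*}
with $R_c(y^\perp) := (1+c)\sum_k (y^\perp_k)^3 + \frac{1}{4}\|y^\perp\|_4^4$, so that the integral factorizes as
\begin{equation*}
\int_{|\overline x-1|\leq r} e^{-V(x)/(hN)}\,dx \ =\ \sqrt{N}\int_{-r}^r \mathcal I(c)\,dc\,, \quad \mathcal I(c) := \int_{\mathcal C^\perp} e^{-V(c\mathbf 1 + y^\perp + I_+)/(hN)}\,dy^\perp\,.
\end{equation*}

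The inner integral is an $(N-1)$-dimensional Laplace integral whose quadratic part $(K + 3(1+c)^2 - 1)|_{\mathcal C^\perp}$ is uniformly positive definite (bounded below by $\mu - 1 > 0$ for $r$ small enough), so its unique minimizer is $y^\perp = 0$ and standard Laplace asymptotics apply. Factoring out the Gaussian normalization yields
\begin{equation*}
\mathcal I(c) \ =\ e^{-\frac{N((1+c)^2-1)^2}{4hN}}\,\frac{(2\pi hN)^{(N-1)/2}}{\sqrt{\det_{\mathcal C^\perp}(K+3(1+c)^2-1)}}\,\mathbb E_{\nu_c}\!\bigl[e^{-R_c(y^\perp)/(hN)}\bigr]\,,
\end{equation*}
with $\nu_c$ the centered Gaussian on $\mathcal C^\perp$ of covariance $hN(K+3(1+c)^2-1)^{-1}$. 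Expanding $e^{-t} = 1 - t + t^2/2 + E(t)$ and using $\mathbb E_{\nu_c}[\sum_k (y^\perp_k)^3] = 0$ by parity, the moment estimates of Lemma~\ref{momentsgaussian} (which extend to $\nu_c$ thanks to the uniform trace bound \eqref{traceclass} applied to its covariance) deliver $\mathbb E_{\nu_c}[R_c/(hN)] = O(h)$ and $\mathbb E_{\nu_c}[R_c^2/(hN)^2] = O(h)$, uniformly in $N$ and $c \in [-r,r]$.

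The outer integral then reduces to a one-dimensional Laplace computation near $c = 0$: the expansion $V((1+c)\mathbf 1)/(hN) = c^2/h + O(c^3/h)$ provides an effective Gaussian of variance $h/2$, and the smooth dependence of the determinantal prefactor on $c$ contributes only at even order after integration against this Gaussian, producing an additional $(1+O(h))$ correction. Combining all factors and using $\det_{\mathcal C^\perp}(K+2) = \det(K+2)/2 = |\det \hess V(I_+)|/2$ reproduces precisely the target $\frac{(2\pi hN)^{N/2}}{|\det \hess V(I_+)|^{1/2}}(1+O(h))$.

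The main obstacle is uniform-in-$N$ control of the Taylor remainder $\mathbb E_{\nu_c}[|E(R_c/(hN))|]$ in the inner integral, since $R_c/(hN)$ is not pointwise small under $\nu_c$. I would split according to $\{|R_c|/(hN) \leq 1\}$ and its complement: on the good set, $|E(t)| \leq C|t|^3 \leq C|t|^2$ together with the uniform second-moment estimate yields an $O(h)$ contribution; on the complement, Chebyshev's inequality applied to the same second-moment bound gives $\mathbb P_{\nu_c}(|R_c|/(hN)>1) = O(h)$, and the problematic factor $e^{|R_c|/(hN)}$ is absorbed by rewriting $e^{-R_c/(hN)}\,d\nu_c$ as a multiple of $e^{-V(c\mathbf 1 + y^\perp + I_+)/(hN)}\,dy^\perp$ and invoking the uniform lower bound \eqref{quadraticbelow} to tame the resulting integral.
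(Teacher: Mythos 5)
Your proposal follows a genuinely different route from the paper's. You factorize $\mathbb R^N = \mathcal C \oplus \mathcal C^\perp$ and run a two-scale Laplace analysis (inner integral over $\mathcal C^\perp$ for each fixed mean $c$, then an outer one-dimensional integral in $c$), whereas the paper keeps the global $\mathbb R^N$-integral throughout. Your algebraic expansion of $V(c\mathbf 1 + y^\perp + I_+)$ is correct, the inner parity cancellation $\mathbb E_{\nu_c}[\sum_k(y^\perp_k)^3]=0$ is right, and the $\sqrt 2$-bookkeeping via $\det_{\mathcal C^\perp}(K+2) = |\det\hess V(I_+)|/2$ does close the loop. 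The two-scale picture is structurally appealing, and indeed the paper itself uses this decomposition elsewhere (Section~\ref{se.lowerbound}).

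However, two steps of the proposal are not currently tight enough. First, your plan to control the tail of $\mathbb E_{\nu_c}[e^{-R_c/(hN)}]$ by rewriting $e^{-R_c/(hN)}\,d\nu_c$ as a multiple of $e^{-V(c\mathbf 1 + y^\perp + I_+)/(hN)}\,dy^\perp$ and then applying~\eqref{quadraticbelow} silently discards a $c$-dependent constant: the ``multiple'' equals $\sqrt{\det_{\mathcal C^\perp}A_c}\,(2\pi hN)^{-(N-1)/2}\,e^{V((1+c)\mathbf 1)/(hN)}$ with $A_c := K+3(1+c)^2-1$, and after applying~\eqref{quadraticbelow} (which gives $V(c\mathbf 1+y^\perp+I_+) \geq \tfrac{Nc^2}{4} + \tfrac12\langle y^\perp,(K-1)y^\perp\rangle$), you are left with a prefactor $e^{((2c+c^2)^2 - c^2)/(4h)}$ that is $O(1)$ only for $|c|\lesssim\sqrt h$ and blows up otherwise. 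The claimed \emph{uniform-in-$c$} estimate $\mathbb E_{\nu_c}[e^{-R_c/(hN)}]=1+O(h)$ does not follow from this argument; you must either push the $c$-dependent prefactor back into the outer integral (where it cancels against the explicit factor $e^{-V((1+c)\mathbf 1)/(hN)}$) or, better, first factor out the nonpositive quartic exponential $e^{-\|y^\perp\|_4^4/(4hN)}\leq 1$ and Taylor-expand only the cubic part, so that the remainder is dominated by $e^{(1+c)^2\|y^\perp\|^2/(hN)}$ via the AM--GM bound $R_c\geq -(1+c)^2\|y^\perp\|^2$; this is integrable against $\nu_c$ uniformly because $A_c - 2(1+c)^2 = K+(1+c)^2-1 \geq \mu-1$ on $\mathcal C^\perp$. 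Second, the claim that the determinantal prefactor ``contributes only at even order after integration against this Gaussian'' is not literally correct: the actual outer weight $e^{-(2c+c^2)^2/(4h)}$ is \emph{not} symmetric in $c$, so odd powers of $c$ do survive. They still contribute only $O(h)$, but this requires either passing to the variable $\tau = 2c+c^2$ (where the Gaussian is symmetric) or tracking the cancellation $\int c^3 e^{-c^2/h}\,dc = 0$ explicitly and showing the $c$-derivative of the determinant ratio is bounded uniformly in $N$ (via~\eqref{traceclass}). None of this is fatal, but it is genuine extra work that the proposal glosses over. By contrast, the paper's proof avoids both difficulties at once: the cubic term $\sum_k x_k^3$ is globally antisymmetric, the domain $\{|\overline x|\leq r\}$ is symmetric, so its contribution vanishes \emph{exactly} (no $c$-bookkeeping needed), and the remainder is bounded by a single explicit Gaussian via~\eqref{quadraticbelowabs} after first pulling out the factor $e^{-\|x\|_4^4/(4hN)}$ and Taylor-expanding only the cubic part.
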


 \begin{proof}
Fix $r\in (0,1]$. Shifting the origin  to the minimum $I_+$ (recall~\eqref{shiftedV}) and isolating the contribution
of the integral given by the non-quadratic part of $V$ from the rest, we write
\begin{gather}    
\int_{\{|\overline x-1|\leq r\}}   e^{-\frac{V(x)}{hN}}   \  dx    \    =      \      
\int_{\{|\overline x|\leq r\}}   e^{-\frac{V(x+I_+)}{hN}}   \  dx  \qquad\qquad
\qquad\qquad
\qquad\qquad
\qquad\qquad
\qquad\qquad
  \nonumber \\
\qquad\qquad=   
 \underbrace{\int_{\{|\overline x|\leq r\}}   
   e^{-\frac{\lp x ,  ( K+2) x  \rp}{2hN}} 
     \  dx}_{=:I}      \  +  \    
      \underbrace{\int_{\{|\overline x|\leq r\}}   a(x)     \    
      e^{-\frac{\lp x ,  ( K+2) x  \rp}{2hN}} 
     \  dx  }_{=:II}    \    ,    \label{integraldecomposition} 
\end{gather}
with
\[     a(x)    \  :=   \       \exp{\big( \ -\frac{   \frac 14\|x\|_4^4  \  +  \   \sum_k x_k^3  }{hN} \  \big)  }     \ -     1   \  .    \]

\

\

\noindent
{\it Computation of $I$:}

\noindent
For the integral $I$ appearing in~\eqref{integraldecomposition}, 
recalling that $\hess V(I_+)=K+2$ and using the Gaussian tail estimate of Lemma \ref{easytail}, we obtain the existence of $C>0$, such that
for every $h\in (0,1]$, $N\in \mb N$, 
\begin{equation}\label{computationI}    I   \  =    \    \frac{ \  (2\pi hN)^{\frac N2}}{|\det \hess V(I_+)|^{\frac 12}}     \    \big(   \     1
+   \    \epsilon_r(h,N)     \   \big)          \  ,    
\end{equation}
where the error term $\epsilon_r(h,N)$ satisfies
\begin{equation*}  
   \exists\, C=C(r)>0 \  \text{ s.t. } \  \forall h\in (0,1]  \   ,   \    \forall N\in \mb N     \   ,     \   \   \    \       |\epsilon_r(h,N)|     \   \leq    \      C^{-1} \   e^{-\frac Ch}          \  .   
\end{equation*}

\

\noindent
{\it Estimate of $II$:}

\noindent 
For the integral $II$ appearing in~\eqref{integraldecomposition},  we proceed as follows: evaluating
the estimate
\[   \forall  \,   t\in \mb R  \  ,     \   \   \    |e^t  -1  -  t|   \  \leq    \     \frac 12 t^2 e^{|t|}   \       \]  
at $t= -(hN)^{-1}\sum_kx_k^3$, we get
\begin{equation} \label{decomptaylor}    a(x) \  =  \  
 \underbrace{e^{-  \frac{  \|x\|_4^4  }{4hN} }  -1}_{=:A}    \   -  \   \underbrace{(hN)^{-1} \sum_k x_k^3  \   e^{ -  \frac{  \|x\|_4^4}{4hN} }}_{=:B}  \   +   \    \epsilon(h,N,x)  \    ,   
 \end{equation}
with
\begin{equation}\label{errorestimate8}|\epsilon(h,N, x) |   \     \leq    \   \frac{1}{2 (hN)^2}  \|x\|_3^6 \ 
\exp{\big( -  \frac{  \frac 14 \|x\|_4^4 -  \big|   \sum_k  x_k^3 \big|}{hN}\big) }       \   .     
\end{equation}
Using  $\|x\|_3^6\leq N \|x\|_6^6$ and~\eqref{quadraticbelowabs} in Lemma~\ref{lemmaquadraticbelow},
it follows from~\eqref{errorestimate8} that
\begin{equation}\label{errorestimate9}
e^{-\frac{\lp x ,  ( K+2) x  \rp}{2hN}}
|\epsilon(h,N, x) |   \     \leq    \   \frac{1}{2 h^2N}  \|x\|_6^6    \    
e^{-\frac{  \lp x, (hNQ)^{-1}x\rp}{2}}        \   ,     \end{equation}
where $Q:\mb R^N \ra \mb R^N$ is defined in~\eqref{defQ}. 
Using  for the term $A$ appearing in \eqref{decomptaylor}
the inequality  $0  \leq 1 - e^{-|t|} \leq |t|$, 
the antisymmetry of the term $B$ and estimate~\eqref{errorestimate9} to control
 $\epsilon(h,N, x)$, 
we get
 \begin{equation*}    |II|   \  \leq   \   \frac{1}{4hN} \ \int_{\mb R^N}
 \|x\|_4^4  \   \    e^{-\frac{\lp x ,  (K+2) x  \rp}{2hN}} 
     \  dx  \  +   \    \frac{1}{2h^2 N}   \       \int_{\mb R^N}       \|x\|_6^6   \ 
  e^{-\frac{\lp x, (hNQ)^{-1}x\rp}{2}  }   \  dx
  \    .      \end{equation*} 
The statement of  Lemma~\ref{momentsgaussian} about 
the control of moments of Gaussian integrals 
together with the expression of $\det Q^{-1}$ given in~\eqref{detofQ} yield
the existence of a $C>0$ such that for every $h>0$, $N\in \mb N$, 
\[   |II|      \   \leq      \  C     \    h  \   \frac{(2\pi h)^{\frac N2}}{
|\det \hess V (I_+)|^{\frac 12} }    \   +   \  
 C  \   h  \   \frac{(2\pi h)^{\frac N2}}{
|\det \hess V (0)|^{\frac 12} }       \  .    \]   
Recalling the convergence of the ratio of determinants
given by~\eqref{convergenceratiodet} of  Lemma~\ref{convergenceratiodet}, we finally obtain  
\begin{equation}\label{estimateII}  
\exists\, C>0 \  :    \   \    \forall\, h>0\,,\ \forall\, N\in \mb N   \  ,     \    \   \     |II|   \  \leq     \  C  \  h  \   \frac{(2\pi h)^{\frac N2}}{
|\det \hess V (I_+)|^{\frac 12} }    \  .    
\end{equation}  
Putting together~\eqref{computationI}   and~\eqref{estimateII} gives the statement of the proposition.

\end{proof}

\noindent According to the symmetry of $V$,
Propositions~\ref{tailpropN} and~\ref{laplaceprop}
finally lead to the precise computation of the normalisation 
constant $Z_{h,N}$:

\begin{corollary}
For the normalisation constant $Z_{h,N}$ we have
\begin{equation}\label{laplace}    
Z_{h,N}   \  :=  \   \int_{\mb R^N}   e^{-\frac{V(x)}{hN}}   \  dx    \     =       \      2 \frac{(2\pi hN)^{\frac N2}}{|\det \hess V(I_+)|^{\frac 12}}     \    \big(   \     1
+   \    \epsilon(h,N)     \   \big)    \       \    ,   
\end{equation}
where the error term $\epsilon(h,N) $ satisfies 
\[   \exists\, C>0 \  \text{ s.t. } \  \forall h\in (0,1]  \   ,   \    \forall N\in \mb N     \   ,     \   \   \    \       |\epsilon(h,N)|     \   \leq    \      C  \   h          \  .   \]
\end{corollary}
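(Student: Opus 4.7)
The plan is to exploit the symmetry $V(-x) = V(x)$ of the potential, which follows at once from the representation \eqref{compactV}: each term $\tfrac14\|x\|_4^4$, $-\tfrac12\|x\|_2^2$ and $\tfrac12\lp x,Kx\rp$ is invariant under $x\mapsto -x$. Since this substitution is measure-preserving and sends the half-space $\{\overline x\geq 0\}$ bijectively onto $\{\overline x\leq 0\}$, while $\{\overline x=0\}$ has Lebesgue measure zero, one gets
\[
Z_{h,N}\ =\ 2\int_{\{\overline x\geq 0\}}e^{-\frac{V(x)}{hN}}\,dx\,.
\]

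Next I would fix once and for all some $r\in(0,1]$, for instance $r=\tfrac12$, and split the half-space as
\[
\{\overline x\geq 0\}\ =\ \{|\overline x-1|\leq r\}\ \cup\ \{\overline x\geq 0\,;\,|\overline x-1|>r\}\,,
\]
which is a disjoint decomposition because $|\overline x-1|\leq r\leq 1$ forces $\overline x\geq 0$. For the first piece I invoke Proposition~\ref{laplaceprop} directly, which yields the main Laplace contribution $\frac{(2\pi hN)^{N/2}}{|\det\hess V(I_+)|^{1/2}}(1+\epsilon_r(h,N))$ with the desired $O(h)$ error. For the second piece I invoke Proposition~\ref{tailpropN}, which provides an exponentially small bound of order $\frac{(2\pi hN)^{N/2}}{|\det\hess V(I_+)|^{1/2}}e^{-C/h}$, uniformly in $N$ for $h\in(0,1]$.

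Summing the two contributions on $\{\overline x\geq 0\}$ and multiplying by $2$ gives the asserted expression for $Z_{h,N}$; the global error term is the sum of the $O(h)$ error coming from Proposition~\ref{laplaceprop} and the $O(e^{-C/h})$ error from Proposition~\ref{tailpropN}, and since $e^{-C/h}\leq C'h$ for $h\in(0,1]$ the total error is indeed $O(h)$ uniformly in $N\in\mathbb N$. There is no genuine obstacle left: the corollary is essentially a bookkeeping consequence of the previous two propositions together with the $\mathbb Z_2$-symmetry of the landscape, the nontrivial work having already been carried out in Subsection~\ref{subsectionZ} (in particular the uniform Gaussian moment control of Lemma~\ref{momentsgaussian} and the determinant convergence of Lemma~\ref{lemmaconvergenceratiodet}).
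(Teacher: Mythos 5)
Your proposal is correct and follows exactly the route the paper intends: the paper's own justification is a one-line remark invoking the symmetry of $V$ together with Propositions~\ref{tailpropN} and~\ref{laplaceprop}, and you have simply spelled out that bookkeeping (reduce to $\{\overline x\geq 0\}$ by the $\mathbb Z_2$ symmetry, split around $\overline x=1$, apply the two propositions, absorb the exponentially small tail into the $O(h)$ error).
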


\subsection{Upper Bound on $\lambda_{h,N}$}\label{subsectionUB}

We give in this section the proof of Theorem~\ref{asymptoticupperbound}. We recall that
for $x\in \mb R^N$, 
\[    \overline x   \  :=   \     \frac 1N\sum_{k=1}^N x_k      \  ,  \]

\noindent
and we consider in the rest of this subsection the following operator $Q$, 
whose inverse is $\Hess V(O)$ modulo inverting sign of its unique
negative eigenvalue,

\begin{equation}\label{changesign}   Q      \  :=    \       (2P + K -1)^{-1}       \   .       
\end{equation}
We have then in particular the relation
 \begin{equation}\label{detofQ2}
\det Q^{-1}   \  =   \    |\det (K -1)| \    =    \     |\det \hess V(0)|      \   .   
\end{equation}

\begin{definition}\label{defpsi}
Let $\chi= \chi_{h,N}: \mb R^N\ra [-1,1]$ 
be the function defined by
\[     \chi(x)    \  :=    \         \frac{2}{\sqrt{2\pi hN}} \int_0^{\sqrt N \overline x}  e^{- \frac{t^2}{2hN}}   \  dt  \  =  \  
 \frac{2}{\sqrt{2\pi h}} \int_0^{\overline x} e^{- \frac{t^2}{2h}}   \ dt    \   .   \]
For $h>0$ let $\psi= \psi_{h,N}:\mb R^{N} \ra \mb R$ be given by
  \begin{equation*}
   \psi (x)   \   :=   \   \frac{\chi(x)}{   \big(  \  \int_{\mb R^N} \chi^2( x) \  
   e^{-\frac{V(x)}{hN}}   \  dx \ 
   \big)^{\frac 12}}    \   .  
\end{equation*} 
\end{definition}

\begin{remark}\label{orthogonalquasimode}
Note that by antisymmetry, the quasimode $\psi$ 
has mean zero:
\[   \int_{\mb R^N}     \psi(x)  e^{-\frac{V(x)}{hN}}     \  dx   \  =    \       0     \  .   \]
\end{remark}

\begin{lemma}\label{compunorm}
The square of the weighted $L^2$-norm of $\chi$ satisfies 
\begin{equation*}\label{qsmest2}  
\int_{\mb R^N}   \chi^2( x) \  e^{-\frac{V(x)}{hN}}   \  dx    \      =       \      2\  \frac{(2\pi hN)^{\frac N2}}{|\det \hess V(I_+)|^{\frac 12}}     \    \big(   \     1
+   \    \epsilon(h,N)     \   \big)    \       \    ,   
\end{equation*}
where the error term $\epsilon(h,N) $ satisfies
\begin{equation}\label{errorOh}    \exists C>0   \  \text{ s.t. }  \  \forall h\in (0,1]  \   ,   \    \forall N\in \mb N     \   ,     \   \   \    \       |\epsilon(h,N)|     \   \leq    \      C  \   h          \  .   \end{equation}
\end{lemma}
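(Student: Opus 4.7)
The plan is to compare $\int \chi^2 \, e^{-V/(hN)}\,dx$ directly with the normalisation constant $Z_{h,N}$, exploiting the fact that $\chi^2$ is essentially equal to $1$ on the bulk of the measure $e^{-V/(hN)}\,dx$, which concentrates near $I_+$ and $I_-$. Since the preceding corollary already supplies the Laplace asymptotics for $Z_{h,N}$ with a relative error of size $Ch$ uniformly in $N$, it will suffice to show that the deviation
\[
R(h,N)\ :=\ \int_{\mb R^N}\bigl(1-\chi^2(x)\bigr)\,e^{-\frac{V(x)}{hN}}\,dx
\]
is at most $C e^{-c/h}\,P$, where $P:=(2\pi hN)^{N/2}/|\det\Hess V(I_+)|^{1/2}$ denotes the leading prefactor. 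Such a bound is more than enough to be absorbed in an error of order $h$.

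To estimate $R(h,N)$, I would fix some $r\in(0,1/2)$ and split $\mb R^N$ into the three regions
\[
A_+:=\{|\overline x-1|<r\},\quad A_-:=\{|\overline x+1|<r\},\quad A_0:=\mb R^N\setminus(A_+\cup A_-).
\]
On $A_0$, we simply use $1-\chi^2\leq 1$ and invoke Proposition \ref{tailpropN}. Because $V$ is invariant under $x\mapsto -x$, the same estimate applies symmetrically on the region $\{\overline x\leq 0,\ |\overline x+1|\geq r\}$, and combining the two shows that $\int_{A_0} e^{-V/(hN)}\,dx\leq C e^{-c/h}\,P$ uniformly in $h\in(0,1]$ and $N\in\mb N$.

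On $A_+$, the point is that $1-\chi^2$ is pointwise exponentially small there. Writing
\[
1-\chi(\overline x)\ =\ \frac{2}{\sqrt{2\pi h}}\int_{\overline x}^{+\infty} e^{-\frac{t^2}{2h}}\,dt
\]
and applying the Gaussian tail estimate \eqref{eq.tail} after the rescaling $t=\sqrt h\,s$, one obtains for every $\overline x\geq 1-r>0$ an inequality of the form
\[
0\ \leq\ 1-\chi(\overline x)\ \leq\ C_r\,\sqrt h\,e^{-\frac{(1-r)^2}{2h}}.
\]
Since $1+\chi\leq 2$, this yields $1-\chi^2\leq 2C_r\sqrt h\,e^{-(1-r)^2/(2h)}$ on $A_+$, and therefore $\int_{A_+}(1-\chi^2)\,e^{-V/(hN)}\,dx\leq C'_r\,e^{-c'/h}\,Z_{h,N}\leq C''_r\,e^{-c'/h}\,P$. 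The region $A_-$ is handled identically by antisymmetry of $\chi$.

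Putting the three contributions together gives $R(h,N)\leq C e^{-c/h}\,P$, and combining with the corollary for $Z_{h,N}$ yields
\[
\int_{\mb R^N}\chi^2\,e^{-\frac{V}{hN}}\,dx\ =\ Z_{h,N}-R(h,N)\ =\ 2\,P\,\bigl(1+\epsilon(h,N)\bigr),
\]
with $|\epsilon(h,N)|\leq Ch$ uniformly in $N$. There is no serious obstacle here: every ingredient (the Laplace asymptotics for $Z_{h,N}$, the tail estimate of Proposition \ref{tailpropN}, and the Gaussian tail bound \eqref{eq.tail}) is already formulated with $N$-uniform constants, so uniformity in the dimension is automatic. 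The only mildly delicate point is to check that the splitting radius $r$ can indeed be fixed once and for all (any $r<1/2$ works), so that the constants $c,c',C,C'',\dots$ above are genuine absolute constants rather than depending on $h$.
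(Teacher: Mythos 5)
Your proposal is correct and uses essentially the same ingredients as the paper's proof: the paper symmetrizes first and splits $\{\overline x\geq 0\}$ into $\{|\overline x-1|\leq \tfrac12\}$ and its complement, applying Proposition~\ref{laplaceprop} together with the pointwise bound $|\chi-1|\leq C^{-1}e^{-C/h}$ on the first piece and $|\chi|\leq 1$ with Proposition~\ref{tailpropN} on the second. Writing the integral as $Z_{h,N}-\int(1-\chi^2)e^{-V/(hN)}dx$ and invoking the corollary for $Z_{h,N}$ is only a cosmetic repackaging of the same two propositions, and all your estimates (the Gaussian tail bound \eqref{eq.tail} for $1-\chi$, the factorization $1-\chi^2=(1-\chi)(1+\chi)$, and the $N$-uniformity of the constants) check out.
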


\begin{proof}
By the symmetry of $V$ and $\chi^2$ and splitting the integral we get
\begin{gather*}  \int_{\mb R^N} \chi^2( x) \  e^{-\frac{V(x)}{hN}}   \  dx      \     =   \    
2 \ \int_{\{\overline x \geq 0\}} \chi^2( x) \  e^{-\frac{V(x)}{hN}}   \  dx  \qquad\qquad\qquad\qquad\qquad\qquad\\ 
\qquad
=  \   \underbrace{2   \    \int_{\{|\overline x -1| \leq \frac 12\}} \chi^2( x) \  
e^{-\frac{V(x)}{hN}}   \  dx }_{=:I}  \   +  \ 
\underbrace{2  \ \int_{\{\overline x\geq 0\,;\,|\overline x -1| \geq \frac 12\}} \chi^2( x) \  e^{-\frac{V(x)}{hN}}   \  dx   }_{=:II}   \,  .   
\end{gather*}
Using for the term $I$ the simple estimate
\[    \exists\, C>0 \   :     \   \forall x \in \{|\overline x-1 | \leq \frac 12   \}  \,,\  \forall \,
h\in (0,1]   \  ,  \  \   \  \     
|\chi(x)-1|   \    \leq    C^{-1} \  e^{-\frac Ch}     \  ,    \]
and  Proposition~\ref{laplaceprop}, and for the term $II$ the bound  $|\chi|\leq 1$ and
Proposition~\ref{tailpropN}, we get
\begin{equation}\label{qsmest2}  
\int_{\mb R^N}   \chi^2( x) \  e^{-\frac{V(x)}{hN}}   \  dx    \      =       \      2\  \frac{(2\pi hN)^{\frac N2}}{|\det \hess V(I_+)|^{\frac 12}}     \    \big(   \     1
+   \    \epsilon(h,N)     \   \big)    \    ,   
\end{equation}
where the error term $\epsilon(h,N) $ satisfies~\eqref{errorOh}.
\end{proof}

\noindent
Theorem~\ref{asymptoticupperbound} is then a direct consequence of
the following proposition:

\begin{proposition}\label{quasimodal1}
The function $\psi$ from Definition~\ref{defpsi} satisfies for every $h>0$ and every $N\in \mb N$,
\begin{equation*} 
hN   \   \int_{\mb R^N} |\nabla \psi|^2   \   e^{-\frac{V(x)}{hN}}   \   dx    \   =    \    \ \frac 1\pi   \  \left|\frac{\det\Hess V(I_-)}{\det\Hess V(0)}\right|^{\frac12} \   e^{-\frac{1}{4h}}   \   
\big(   \   1   \  +   \   \epsilon(h,N)   \  \big)       \  ,
\end{equation*}
where  the error term $\epsilon(h,N)$ satisfies 
\begin{equation*}\label{errorOhtt}    \exists\, C>0   \  \text{ s.t. }  \  \forall h\in (0,1]  \   ,   \    \forall N\in \mb N     \   ,     \   \   \    \       |\epsilon(h,N)|     \   \leq    \      C  \   h          \  .   \end{equation*}
\end{proposition}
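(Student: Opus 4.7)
The plan is to reduce $hN\int|\nabla\psi|^2\,e^{-V/hN}dx$ to the ratio between a single Laplace-type integral and $\|\chi\|_{L^2(m_{h,N})}^2$, the latter already being computed sharply in Lemma~\ref{compunorm}. The key observation is that the effective potential $W(x):=\overline x^2+V(x)/N$ attains its global minimum $W(0)=\tfrac14$ at $0$ with nondegenerate Hessian $\frac{1}{N}Q^{-1}$, where $Q=(2P+K-1)^{-1}$ is the positive operator defined in \eqref{changesign}, and that the remainder is a single, \emph{globally nonnegative} quartic term. The absence of cubic corrections (unlike the situation in Proposition~\ref{laplaceprop}) is what keeps the uniformity in $N$ clean.

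First I would compute $|\nabla\chi|^2$ explicitly using $\partial_k\overline x=1/N$, which yields $|\nabla\chi|^2=\frac{2}{\pi hN}e^{-\overline x^2/h}$, hence
\[hN\int_{\mathbb R^N}|\nabla\chi|^2\,e^{-V/hN}dx\ =\ \frac{2}{\pi}\int_{\mathbb R^N}e^{-W(x)/h}dx.\]
Using $\langle x,Px\rangle=N\overline x^2$ together with the expression \eqref{compactV} of $V$, I would then verify the exact algebraic identity
\[W(x)-\tfrac{1}{4}\ =\ \tfrac{1}{2N}\langle x,Q^{-1}x\rangle+\tfrac{1}{4N}\|x\|_4^4,\]
with $\det Q^{-1}=|\det\Hess V(0)|$ as recorded in \eqref{detofQ2}.

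The core step is the uniform Laplace asymptotics for $\int e^{-W/h}dx$. After factoring out $e^{-1/(4h)}$ and applying the elementary bound $|1-e^{-t}|\le t$ ($t\ge 0$) to the quartic factor, the deviation from the Gaussian approximation is controlled by
\[\Bigl|\int e^{-W/h}dx-e^{-\frac{1}{4h}}\!\!\int e^{-\frac{\langle x,Q^{-1}x\rangle}{2hN}}dx\Bigr|\ \leq\ \frac{e^{-1/(4h)}}{4hN}\int\|x\|_4^4\,e^{-\frac{\langle x,Q^{-1}x\rangle}{2hN}}dx.\]
Lemma~\ref{momentsgaussian} applied with $\alpha=2$, $\beta=-1$ and $p=4$ bounds the right-hand side by a constant times $h\,e^{-1/(4h)}(2\pi hN)^{N/2}/(\det Q^{-1})^{1/2}$ uniformly in $h\in(0,1]$ and $N\in\mathbb N$, while the leading Gaussian integral equals $(2\pi hN)^{N/2}/|\det\Hess V(0)|^{1/2}$, so that
\[\int_{\mathbb R^N}e^{-W/h}\,dx\ =\ \frac{(2\pi hN)^{N/2}}{|\det\Hess V(0)|^{1/2}}\,e^{-1/(4h)}\bigl(1+O(h)\bigr).\]

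Assembling this with Lemma~\ref{compunorm} and using the symmetry identity $|\det\Hess V(I_+)|=|\det\Hess V(I_-)|$ to rewrite the prefactor yields the stated formula. The main difficulty I anticipate is ensuring that the quartic moment bound is genuinely uniform in $N$, but this uniformity is precisely what is built into Lemma~\ref{momentsgaussian} through the trace-class property \eqref{traceclass}; the nonnegativity of both summands in the identity for $W(x)-\tfrac14$ then dispenses with any separate tail analysis and the entire argument is spared the splitting used in Proposition~\ref{laplaceprop}.
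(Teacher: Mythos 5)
Your proposal is correct and follows essentially the same route as the paper: compute $hN|\nabla\chi|^2=\tfrac2\pi e^{-\overline x^2/h}$, recognize the resulting exponent as $\tfrac{1}{4h}+\tfrac{1}{2hN}\langle x,Q^{-1}x\rangle+\tfrac{1}{4hN}\|x\|_4^4$ with $Q=(2P+K-1)^{-1}$, peel off the Gaussian, control the quartic correction via $0\le 1-e^{-t}\le t$ together with the uniform moment bound of Lemma~\ref{momentsgaussian}, and divide by the squared norm from Lemma~\ref{compunorm}. Your repackaging through the effective potential $W=\overline x^2+V/N$ is a cosmetic variant, and your observation that the nonnegative quartic with no cubic term removes the need for the splitting used in Proposition~\ref{laplaceprop} accurately reflects why the paper's argument here is shorter.
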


\begin{proof}
Since for every $x\in \mb R^N$, 
\[       hN \, |\nabla \chi|^2 (x)    \  =   \      \frac 2\pi   \  
e^{-\frac{\overline x^2}{h}}  
\  =   \     \frac 2\pi   \  e^{-\frac{\lp x, 2P x\rp}{2hN}}      \  ,  \]
we get with $Q$ as defined in~\eqref{changesign}, 
\begin{gather*}
hN   \,   \int_{\mb R^N} |\nabla \chi|^2   \   e^{-\frac{V(x)}{hN}}   \   dx    \
=
  \   \int_{\mb R^N}    e^{-\frac{\lp x, (hNQ)^{-1}x\rp}{2}}     \   dx 
  \qquad\qquad\qquad\qquad\\
\qquad \qquad\qquad   +     \   \int_{\mb R^N}    e^{-\frac{\lp x, (hNQ)^{-1}x\rp}{2}}   \
\big(    \   e^{-\frac{\|x\|_4^4 }{4hN}}    -1  \   \big)             \   dx    \   .
\end{gather*}
From this equality, the expression of the determinant of $Q$
given in~\eqref{detofQ2}, and from
the inequality  $0  \leq 1 - e^{-|t|} \leq |t|$,   we obtain 
the following estimate also using
the uniform bounds on Gaussian
moments provided by Lemma~\ref{momentsgaussian},
\begin{equation}\label{qsmest1}
hN   \,   \int_{\mb R^N} |\nabla \chi|^2   \   e^{-\frac{V(x)}{hN}}   \   dx    \   =    \    \ \frac 2\pi   \ 
\frac{(2\pi hN)^{\frac N2}}{|\det\Hess V(0)|^{\frac12}}   \   \big(   \   1   \  +   \   \epsilon(h,N)   \  \big)       \  ,
\end{equation}
where the error term $\epsilon(h,N) $ satisfies~\eqref{errorOhtt}. Combining
this with Lemma~\ref{compunorm}
finishes the proof.
\end{proof}

\section{Sharp spectral gap asymptotics}
\label{se.sharpspectralgap}

In this section, we prove  Theorem~\ref{th.main2}. 
To do so, we will again
use the test function $\psi$ introduced in Definition~\ref{defpsi}
in order to show that it asymptotically saturates
the inequality
\[\forall\,\varphi \in D(L_{h})\quad \text{s.t.}\quad 
\|\varphi\|_{L^2(e^{-\frac{V}{hN}})}=1
 \ ,\quad  \lambda(h,N)    \  \leq   \  hN\,\int_{\mathbb R^N} |\nabla\varphi|^2 e^{-\frac{V(x)}{hN}}    dx     
 \]
under a further
assumption on
the separation between the second and the third eigenvalues of $L_{h}$.
Under this condition, we can indeed reverse 
this inequality up to an error term involving 
the quadratic form 
$$
\mc E (\varphi)\ :=\ 
\frac{ \int_{\mathbb R^N}     |L_h \varphi|^2  \  e^{-\frac{V(x)}{hN}}   \  dx }{hN\int_{\mathbb R^N} |\nabla\varphi|^2 e^{-\frac{V(x)}{hN}}\ dx}\,,
$$
which was already mentioned in the introduction.

\begin{proposition}\label{spectralgaprepresentation}
Let $\delta,h_0>0$ and, for every  $h\in (0, h_0]$,  $\mc N(h)\subset \mb  N$    s.t.
\begin{equation}\label{conditiondelta}   
  \forall h\in(0,h_0]  \   ,   \    \forall N\in \mc N(h)      \   ,     \   \   \    
 \spec(L_h)   \   \cap  \   [0,\delta)         \       =       \   \{0,\lambda(h,N)\}       \    .
 \end{equation}
Then, for all  $h\in(0,h_0]$, $N\in \mc N(h)$ and
$\varphi:= \varphi_{h,N}\in D(L_h)$ satisfying
\begin{equation}\label{proptest}\int_{\mathbb R^N}\!\! \varphi^2 e^{-\frac{V(x)}{hN}} dx  =    1     \,  ,
\ 
     \int_{\mathbb R^N}\!\! \varphi \,e^{-\frac{V(x)}{hN}}    dx  =    0\ ,
     \ 
     \int_{\mathbb R^N}\!\! \varphi (L_h \varphi)   e^{-\frac{V(x)}{hN}}   dx\ <\ 
     \frac{\delta}{2} \, ,  
\end{equation}
we have the lower bound
\[     \lambda(h,N)    \  \geq   \  hN\int_{\mathbb R^N} |\nabla\varphi|^2 e^{-\frac{V(x)}{hN}}dx   \   \big(  \   
1   \  -   \     \epsilon(h,N)   \  \big)     \  ,      \]
where the error term $\epsilon(h,N)$ satisfies
\[            0 \  \leq  \ \epsilon(h,N)     \   \leq    \  
\min\big\{\,1\ ,\ \frac{2hN}{ \delta} \int_{\mathbb R^N} |\nabla\varphi|^2 e^{-\frac{V(x)}{hN}}dx     +    \frac{2}{  \sqrt\delta}  \ \sqrt{\mc E(\varphi)} \,  \big\} \   .  \]
\end{proposition}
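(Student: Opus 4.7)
The strategy is to work with the spectral decomposition of $L_{h}$, which is self-adjoint on $L^{2}(e^{-V/(hN)}dx)$ with purely discrete spectrum (see Subsection~\ref{subse.Witten}). Under assumption~\eqref{conditiondelta}, its spectrum in $[0,\delta)$ is exactly $\{0,\lambda(h,N)\}$, carried respectively by the constant function and some normalized eigenfunction $e_{1}$. Since $\varphi$ is of zero mean, the kernel component vanishes and I would write $\varphi=\alpha e_{1}+\psi^{\perp}$, where $\psi^{\perp}$ belongs to the spectral subspace of $L_{h}$ associated with $[\delta,+\infty)$. Orthogonality of the spectral pieces yields $\alpha^{2}+\|\psi^{\perp}\|^{2}=1$,
\[
hN\!\int\!|\nabla\varphi|^{2}e^{-V/(hN)}dx\,=\,\alpha^{2}\lambda(h,N)+hN\!\int\!|\nabla\psi^{\perp}|^{2}e^{-V/(hN)}dx\,,
\]
and similarly $\|L_{h}\varphi\|^{2}=\alpha^{2}\lambda(h,N)^{2}+\|L_{h}\psi^{\perp}\|^{2}$, where the norm refers to $L^{2}(e^{-V/(hN)}dx)$.

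Since $\alpha^{2}\leq 1$, the Dirichlet-form identity already yields the key inequality $\lambda(h,N)\geq\alpha^{2}\lambda(h,N)=hN\int|\nabla\varphi|^{2}e^{-V/(hN)}dx-hN\int|\nabla\psi^{\perp}|^{2}e^{-V/(hN)}dx$, so the whole issue becomes an upper bound on the Dirichlet form of $\psi^{\perp}$. As $\psi^{\perp}$ is supported spectrally on $\{\lambda\geq\delta\}$, one has both the Rayleigh bound $hN\int|\nabla\psi^{\perp}|^{2}e^{-V/(hN)}dx\geq\delta\|\psi^{\perp}\|^{2}$ (which together with the trivial $hN\int|\nabla\psi^{\perp}|^{2}e^{-V/(hN)}dx\leq hN\int|\nabla\varphi|^{2}e^{-V/(hN)}dx$ produces the mass bound $1-\alpha^{2}\leq hN\int|\nabla\varphi|^{2}e^{-V/(hN)}dx/\delta$, which in turn forces $\alpha^{2}>1/2$ thanks to the hypothesis $hN\int|\nabla\varphi|^{2}e^{-V/(hN)}dx<\delta/2$), and the operator bound $\|L_{h}\psi^{\perp}\|^{2}\geq\delta\cdot hN\int|\nabla\psi^{\perp}|^{2}e^{-V/(hN)}dx$. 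Cauchy-Schwarz applied to the spectral measure of $\varphi$ restricted to $[\delta,+\infty)$ then provides
\[
hN\!\int\!|\nabla\psi^{\perp}|^{2}e^{-V/(hN)}dx\,\leq\,\|L_{h}\psi^{\perp}\|\sqrt{1-\alpha^{2}}\,\leq\,hN\!\int\!|\nabla\varphi|^{2}e^{-V/(hN)}dx\,\sqrt{\mc E(\varphi)/\delta}\,,
\]
from which one gets $\lambda(h,N)\geq hN\int|\nabla\varphi|^{2}e^{-V/(hN)}dx\,\bigl(1-\sqrt{\mc E(\varphi)/\delta}\,\bigr)$.

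The stated bound then follows from the trivial domination $\sqrt{\mc E(\varphi)/\delta}\leq 2\sqrt{\mc E(\varphi)/\delta}\leq\frac{2\,hN\int|\nabla\varphi|^{2}e^{-V/(hN)}dx}{\delta}+\frac{2\sqrt{\mc E(\varphi)}}{\sqrt\delta}$, while the $\min$ against $1$ reflects the a priori bound $\lambda(h,N)\geq 0$. The principal subtlety is bookkeeping: one must carefully justify the spectral decomposition using both the mean-zero condition (killing the contribution of $\ker L_{h}$) and the gap condition~\eqref{conditiondelta} (ensuring that $\psi^{\perp}$ is spectrally above $\delta$). The assumption $\int\varphi L_{h}\varphi\,e^{-V/(hN)}dx<\delta/2$ is used mainly to guarantee $\alpha^{2}>1/2$ and thus the non-degeneracy of the decomposition, and secondarily to keep the first summand in the stated bound below $1$, consistently with the $\min$.
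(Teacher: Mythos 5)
Your argument is correct. The underlying idea coincides with the paper's: decompose $\varphi$ using the spectral projector onto $[0,\delta)$, use the mean-zero condition to kill the kernel component, and control the high-lying remainder through the gap $\delta$, the Dirichlet form and $\|L_h\varphi\|$. Where you differ is in the final assembly. The paper normalizes $u:=P\varphi/\|P\varphi\|$, writes $\lambda(h,N)=\langle u,L_hu\rangle$ and rearranges; this forces a division by $\|P\varphi\|^2$, which is why the hypothesis $hN\int|\nabla\varphi|^2e^{-V/(hN)}dx<\delta/2$ is needed there (to get $\|P\varphi\|^2\geq\frac12$ and hence the factors of $2$ in the error term). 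You instead use $\lambda\geq\alpha^2\lambda=hN\int|\nabla\varphi|^2e^{-V/(hN)}dx-\langle\psi^\perp,L_h\psi^\perp\rangle$ and bound the tail Dirichlet form by Cauchy--Schwarz plus the two spectral inequalities $\langle\psi^\perp,L_h\psi^\perp\rangle\geq\delta\|\psi^\perp\|^2$ and $\|L_h\psi^\perp\|^2\geq\delta\langle\psi^\perp,L_h\psi^\perp\rangle$. This avoids the division entirely, makes the $<\delta/2$ hypothesis essentially superfluous for the bound itself (it only enters via the observation that $\alpha^2>\frac12$, which your chain of inequalities never actually uses), and yields the slightly sharper error $\sqrt{\mc E(\varphi)/\delta}$ in place of the paper's $\frac{2hN}{\delta}\int|\nabla\varphi|^2e^{-V/(hN)}dx+\frac{2}{\sqrt\delta}\sqrt{\mc E(\varphi)}$, which dominates it as you note. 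Your handling of the $\min$ with $1$ via $\lambda\geq0$ is also the intended one.
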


\noindent
The proof is a simple application of the following standard Markov-type inequality,
which is a consequence of the spectral theory for self-adjoint operators. 

\begin{lemma}\label{lemmafuncanalysis}
Let $T$ be a nonnegative self-adjoint 
 operator on a Hilbert space $H$ 
with domain $D$. Then for every $u\in D$ and every $b>0$,
\[\|1_{[b,\infty)}(T) u \|^2 \ \leq\  \frac{\lp T u, u \rp}{b} \ \ . \]
\end{lemma}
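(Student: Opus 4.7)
The plan is to deduce this inequality directly from the spectral theorem applied to $T$, in complete analogy with the classical Chebyshev--Markov inequality for positive random variables.

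More precisely, since $T$ is self-adjoint and nonnegative, there exists a projection-valued spectral measure $(E_\lambda)_{\lambda\geq 0}$ supported in $[0,\infty)$ such that, for every $u$ in the domain $D$ of $T$,
\[
\langle Tu, u \rangle \ =\  \int_{[0,\infty)} \lambda \, d\|E_\lambda u\|^2
\qquad \text{and} \qquad
\|1_{[b,\infty)}(T)u\|^2 \ =\ \int_{[b,\infty)} d\|E_\lambda u\|^2 .
\]
Restricting the first integral to $[b,\infty)$ (which only decreases it, as the integrand is nonnegative on $[0,\infty)$) and bounding $\lambda \geq b$ on that set would then yield
\[
\langle Tu, u\rangle \ \geq\  \int_{[b,\infty)}\lambda \, d\|E_\lambda u\|^2 \ \geq\ b \int_{[b,\infty)} d\|E_\lambda u\|^2 \ =\ b\, \|1_{[b,\infty)}(T) u\|^2,
\]
and division by $b>0$ gives the claimed bound.

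I would present this as a two-line proof. There is no genuine obstacle: the only point worth being careful about is ensuring that $u\in D(T)$ guarantees $\lambda\mapsto\lambda$ is integrable against $d\|E_\lambda u\|^2$, so that the first identity above makes sense. This is precisely the characterisation of the domain of a (nonnegative) self-adjoint operator via its spectral resolution, so it may be invoked without further comment.
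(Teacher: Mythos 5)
Your proof is correct and is exactly the standard spectral-calculus (Chebyshev--Markov) argument that the paper itself invokes: the lemma is stated there without proof, described only as ``a consequence of the spectral theory for self-adjoint operators.'' Your write-up, including the remark that $u\in D(T)$ ensures integrability of $\lambda$ against $d\|E_\lambda u\|^2$, is a complete and faithful version of that intended argument.
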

 
\

\begin{proof}[Proof of Proposition~\ref{spectralgaprepresentation}]
We denote respectively by $\lp\cdot, \cdot \rp$ and $\|\cdot\|$ the scalar product and the Hilbert norm
in $L^2(e^{-\frac{V}{hN}})$, and by
$P := \mbf 1_{[0,\delta)}(L_h)$ the spectral projector of $L_h$
onto the interval $[0,\delta)$. From
$$
\int_{\mathbb R^N} \varphi (L_h \varphi)   e^{-\frac{V(x)}{hN}} \   dx\ 
=\ hN\int_{\mathbb R^N} |\nabla\varphi|^2 e^{-\frac{V(x)}{hN}}\ dx \,,
$$
using the third point of   the 
property~\eqref{proptest} together with 
Lemma~\ref{lemmafuncanalysis}, 
we get
\begin{equation}\label{normprojectionnew}     
   \|(1-P)\varphi\|^2  \ =\ \|\mbf 1_{[\delta,+\infty)}(L_h)\varphi\|  
    \    \leq      \      \frac{hN}{\delta}\|\nabla\varphi\|^2   
     \   <     \     \frac 12      \   .      
\end{equation}
In particular, since $\|\varphi\|=1$, we have also
\begin{equation}\label{normprojection2}
\|P\varphi\|^2    =   \|\varphi\|^2  -   \|(1-P)\varphi\|^2   \geq \frac12  
\   ,   \end{equation}
and so
$P\varphi\neq 0$. We can therefore define
$ u  :=    \frac{P\varphi}{\|P\varphi\|}$.
Since moreover $\lp \varphi, 1\rp=0$, we have
$\lp P\varphi, 1\rp = \lp\varphi, P1\rp=0\,.$ 
Thus, using also~\eqref{conditiondelta}, 
$u$ is necessarily a normalised eigenfunction of $L_h$ associated with
the 
eigenvalue $\lambda(h,N)$. Consequently, it follows from $L_hP=PL_h$
on $D(L_{h})$, the self-adjointness
of $P=P^2$, and elementary rearrangements of terms, that
\begin{gather*}    \lambda(h,N)    \  =    \   \lp u , L_h u  \rp     \   =   \    \frac{\lp P\varphi, L_h P\varphi \rp}{\|P\varphi\|^2}    \   =
\  \frac{\lp \varphi, L_h \varphi\rp}{\|P\varphi\|^2}    \  +  \  \frac{\lp P\varphi -\varphi, L_h \varphi\rp }{\|P\varphi\|^2} 
 \\
= \  hN \| \nabla \varphi\|^2  \Big[    1  \  +   \  
\underbrace{\frac{\|(1-P)\varphi\|^2}{\|P\varphi\|^2}}_{=:I}    \  +  \  
\underbrace{\frac{\lp (P-1)\varphi, L_h \varphi \rp}{hN\| \nabla \varphi\|^2}}_{=:II}      
\underbrace{\big(   1  \  +   \  \frac{\|(1-P)\varphi\|^2}{\|P\varphi\|^2} \big)}_{=:III}  \Big] .
\end{gather*}
The statement of the proposition follows now by observing that, according 
to~\eqref{normprojectionnew} and~\eqref{normprojection2}, 
\[   I    \    \leq   \  \frac{2hN}{\delta}\|\nabla\varphi\|^2     \   ,       \   \       
\left| II\right|    \    \leq   \  \frac{ \|L_h\varphi\|}{\sqrt{\delta\,hN}\|\nabla\varphi\|}    \  ,   \   \text{and}\quad     III   \    \leq     \    2     \ ,     \]
and $\lambda(h,N)$ is nonnegative.
\end{proof}

\noindent
Applying Proposition~\ref{spectralgaprepresentation}
with the test function $\psi$ as  defined in  Definition~\ref{defpsi},
the statement of Theorem~\ref{th.main2} is then
a direct consequence 
of the quasimodal estimates
given in Proposition~\ref{quasimodal1} and 
in the following proposition:

 \begin{proposition}\label{Propquasimodal2}
 Let $\psi$ be the test function introduced in Definition~\ref{defpsi}.
Then there exists $C>0$ such that for every $h\in(0,1]$ and
every  $N\in \mb N$, 
\begin{equation}\label{quasimodal2}
 \  \int_{\mathbb R^N}     |L_h \psi|^2  \  e^{-\frac{V(x)}{hN}}   \  dx    \    \leq         \         C    \,      h^2    \,  
   \left|\frac{\det\hess V(I_-)}{\det\hess V(0)}\right|^{\frac12} \   e^{-\frac{1}{4h}}  \  .  
\end{equation}
\end{proposition}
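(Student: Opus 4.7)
The plan is to compute $L_{h}\chi$ explicitly (taking advantage of the fact that $\chi$ depends on $x$ only through the mean $\overline x$), then bound the weighted $L^{2}$-integral of $L_{h}\chi$ by a Gaussian integral with a good quadratic exponent, and finally invoke Lemma~\ref{momentsgaussian} and Lemma~\ref{compunorm}. By the symmetry of $V$ the right-hand side of~\eqref{quasimodal2} can equivalently be written with $\det\hess V(I_+)$, which is the form that will appear naturally from $Z_{h,N}$.

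First, writing $\chi(x)=g(\overline x)$ with $g(s)=\frac{2}{\sqrt{2\pi h}}\int_{0}^{s}e^{-t^{2}/(2h)}\,dt$, one has $\partial_{k}\chi=\frac{1}{N}g'(\overline x)$ and $\partial_{k}^{2}\chi=\frac{1}{N^{2}}g''(\overline x)$. Since $\sum_{k}(Kx)_{k}=0$ (the constants lie in $\ker K$), we obtain
\[
L_{h}\chi\,=\,-h\,g''(\overline x)+\frac{g'(\overline x)}{N}\sum_{k}\bigl(x_{k}^{3}-x_{k}\bigr)\,.
\]
Using the Gaussian identity $-h\,g''(s)=s\,g'(s)$, the contribution $-\overline x\,g'(\overline x)$ cancels $-\frac{g'(\overline x)}{N}\sum_{k}x_{k}$ and we are left with the clean expression
\[
L_{h}\chi(x)\,=\,\frac{g'(\overline x)}{N}\sum_{k}x_{k}^{3}\,=\,\frac{2}{N\sqrt{2\pi h}}\,e^{-\overline x^{2}/(2h)}\sum_{k}x_{k}^{3}\,.
\]

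Next, I would combine the Gaussian factor with the Gibbs weight. Using inequality~\eqref{potentialestimate} from the proof of Lemma~\ref{lemmaquadraticbelow} and the elementary identity
\[
\tfrac{(\overline x^{2}-1)^{2}}{4}+\overline x^{2}\,=\,\tfrac{(\overline x^{2}+1)^{2}}{4}\,\geq\,\tfrac{1}{4}+\tfrac{\overline x^{2}}{2}\,,
\]
together with $N\overline x^{2}=\langle x,Px\rangle$, one gets the pointwise lower bound
\[
\frac{V(x)+N\overline x^{2}}{hN}\,\geq\,\frac{1}{4h}+\frac{\langle x,(2P+K-1)x\rangle}{2hN}\,=\,\frac{1}{4h}+\frac{\langle x,Q^{-1}x\rangle}{2hN}\,,
\]
where $Q=(2P+K-1)^{-1}$ is exactly the operator introduced in~\eqref{changesign}. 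This is the key structural step: the convexification term $N\overline x^{2}$ coming from $|\nabla\chi|^{2}$ converts the degenerate direction $P$ of $\hess V(O)$ into the positive direction $2P$, so that the resulting quadratic form $Q^{-1}$ is uniformly positive and Lemma~\ref{momentsgaussian} applies.

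Putting the above estimates together yields
\[
\int_{\mathbb R^{N}}|L_{h}\chi|^{2}\,e^{-\frac{V}{hN}}\,dx\,\leq\,\frac{2}{\pi h\,N^{2}}\,e^{-\frac{1}{4h}}\int_{\mathbb R^{N}}\Bigl(\sum_{k}x_{k}^{3}\Bigr)^{\!2}e^{-\frac{\langle x,Q^{-1}x\rangle}{2hN}}\,dx\,.
\]
Cauchy--Schwarz gives $(\sum_{k}x_{k}^{3})^{2}\leq N\|x\|_{6}^{6}$, and Lemma~\ref{momentsgaussian} (applied with $p=6$) together with the determinant identity $\det Q^{-1}=|\det\hess V(O)|$ from~\eqref{detofQ2} then bounds the remaining integral by $C\,N^{2}h^{3}\bigl((2\pi hN)^{N}/|\det\hess V(O)|\bigr)^{1/2}$. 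Dividing by $\int\chi^{2}e^{-V/hN}\,dx$, which by Lemma~\ref{compunorm} equals $2(2\pi hN)^{N/2}|\det\hess V(I_{+})|^{-1/2}(1+O(h))$, cancels the factor $(2\pi hN)^{N/2}$ and yields exactly a bound of the form $Ch^{2}|\det\hess V(I_{+})|^{1/2}/|\det\hess V(O)|^{1/2}\,e^{-1/(4h)}$, which by the symmetry of $V$ coincides with the right-hand side of~\eqref{quasimodal2}.

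The main obstacle, and the step that actually requires care, is the quadratic lower bound for $V+N\overline x^{2}$: one needs the combination of the quartic lower bound from~\eqref{potentialestimate} and the completion-of-the-square $(\overline x^{2}+1)^{2}/4\geq1/4+\overline x^{2}/2$ to obtain a quadratic form whose $N$-independent trace-class properties (via~\eqref{traceclass}) permit the application of Lemma~\ref{momentsgaussian}. The remainder of the argument is bookkeeping with Gaussian moments and the asymptotics of $Z_{h,N}$ already established in Subsection~\ref{subsectionZ}.
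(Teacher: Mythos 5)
Your argument matches the paper's proof in all essentials: the explicit computation $L_h\chi = \frac{2}{N\sqrt{2\pi h}}e^{-\overline x^2/(2h)}\sum_k x_k^3$, the absorption of the Gaussian factor into the Gibbs weight with the operator $Q=(2P+K-1)^{-1}$, Cauchy--Schwarz, Lemma~\ref{momentsgaussian} with $p=6$, and the normalization via Lemma~\ref{compunorm}. The only (harmless) detour is your derivation of the pointwise bound $V+N\overline x^2 \geq \frac N4 + \frac12\langle x,Q^{-1}x\rangle$ via~\eqref{potentialestimate} and completion of the square; using~\eqref{compactV} it is actually an exact identity after dropping the nonnegative term $\frac14\|x\|_4^4$, which is what the paper implicitly uses.
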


\begin{proof}
A straightforward computation, whose
 details are given below for the sake of
completeness, leads to the identity 
 \begin{equation}\label{identityquasimodal2}    \int_{\mb R^N}     |L_h \chi|^2  \  e^{-\frac{V(x)}{hN}}   \  dx     \  =   \   \frac{2}{\pi h N^2}   \   
 \int_{\mb R^N}   \big(\sum_{k=1}^Nx_{k}^3\big)^2   \   e^{-\frac{V(x) + \lp x, P x \rp}{hN}} \  dx   \   .    
 \end{equation}
Hence, 
using the estimate
$\big(\sum_{k=1}^Nx_{k}^3\big)^2\leq   N \|x\|_6^6$
implied by the Cauchy-Schwarz inequality, 
we obtain the bound
\begin{equation} \label{boundsquasimodal2}
  0   \     \leq     \   \int_{\mb R^N}     |L_h \chi|^2  \  e^{-\frac{V(x)}{hN}}   \  dx     \    \leq   \      \   \frac{2\,e^{-\frac{1}{4h}}}{\pi h N}   \   
 \int_{\mb R^N}    \|x\|_6^6    \   e^{-\frac{\lp x, (hNQ)^{-1}x\rp}{2}} \  dx   \   ,    
 \end{equation}
 where $Q$ is defined in~\eqref{changesign}. The estimate~\eqref{quasimodal2}
 follows by applying the uniform moment bound of Lemma~\ref{momentsgaussian}  
 to the right hand side of~\eqref{boundsquasimodal2}, 
 recalling the expression~\eqref{detofQ2} of the determinant of $Q$ and 
 finally invoking Lemma~\ref{compunorm} for $\int_{\mathbb R^N} \chi^2 e^{-\frac{V(x)}{hN}}dx$. 
 
 \

\noindent
To show~\eqref{identityquasimodal2} and thus completing the proof, we 
note that for
$k\in\{1, \dots, N\}$, 
\[         \partial_k \chi(x)  \  =   \    \frac{1}{ N} \frac{2}{\sqrt{2\pi h}}      e^{ -  \frac{\overline x^2}{2h}}    \   =    \   \frac{1}{ N} \frac{2}{\sqrt{2\pi h}}      
e^{ - \frac{\lp x, Px \rp}{2hN}}  \  ,   \]
and compute
\begin{eqnarray*}
L_h \chi  &= &  
 -  hN    \  e^{\frac{V(x)}{hN}}  \   \sum_{k=1}^N \partial_k \big(  \  e^{-\frac{V(x)}{hN}}   \partial_k  \chi    \ \big)      \\   
&=& -      \frac{2}{\sqrt{2\pi h}N}      \  e^{-\frac{\lp x, Px\rp}{2hN}} \ 
\sum_{k=1}^N \big(   - \partial_k V(x)   \  +     \     \overline x   \big)    \\ 
&= &
    \frac{2}{\sqrt{2\pi h}N}      \  e^{-\frac{\lp x, Px\rp}{2hN}} \ 
\sum_{k=1}^N  x_k^3   \       \ 
 \   ,        
\end{eqnarray*}
where for the last inequality we used $\sum_{k=1}^N (Kx)_k=0$. Taking the square 
we get~\eqref{identityquasimodal2}. 
\end{proof}

\section{Lower bound on the second spectral gap} \label{se.lowerbound}

The aim of this section is to prove Theorem~\ref{th.main3}. Instead of working directly 
with the diffusion operator $L_h$, we switch to the Schr\"{o}dinger operator 
point of view and consider the semiclassical Witten Laplacian 
on functions acting in the flat $L^2(dx)$ and given by  
\[\Delta_{f,h}^{(0)}   \  =    \   -h^2\Delta    \  +   \     |\nabla  f |^2 \      -h\Delta  f     \     , \]
where $f:\mb R^N \ra \mb R$ is defined as 
\begin{equation}\label{definitionfWitten}  
  f(x)   \  :=   \     \frac{V(\sqrt Nx)}{2N}  \  =   \      \frac N8 \|x\|_4^4   \  +  \  
\frac 14 \lp x, (K-1)x\rp    \     + \     \frac 18   \  .  
\end{equation}
Note that due to the rescaling of variables, the two minima of $f$ are rescaled
by a factor $\sqrt N$ with respect to the minima of $V$. More precisely they are given 
by 
\[       J_{+}    \   :=      \    \frac{I_+}{\sqrt N}  \ =    \  \frac{1}{\sqrt N} (1, \dots, 1)  \  \   \ ,  
\   \  \     J_{-}    \   :=      \    \frac{I_-}{\sqrt N} 
 \ =    \  \frac{1}{\sqrt N} (-1, \dots,- 1)        \  .  \]
Since in this proof we deal only with the Witten Laplacian acting on functions 
we drop in the sequel 
the superscript $(0)$
and write for short  $\Delta_{f,h}:=   \Delta_{f,h}^{(0)}$. 
\noindent  
Moreover, note also that from the relation~\eqref{eq.spectrumLhWitten}
 between $L_h$ and $\Delta_{f,h}$, Theorem~\ref{th.main3} is implied  by the 
following. 

\begin{theorem}\label{th.CFKS} Let $C>0$ and $\alpha\in (0, \frac 34)$. Then there exist two positive constants
$h_0$ and $\ell$ such that 
\[\forall h\in (0, h_0] \  \text{ and }   \  \forall N \leq C \, h^{-\alpha} \ ,   \    \ \    
\dim \big( \range \mbf 1_{[0, \ell h)} (\Delta_{f,h}) \big)    \  \leq     \   2       \  .  \]
\end{theorem}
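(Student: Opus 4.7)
The plan is to use IMS localization combined with a two-scale decomposition adapted to the splitting $\R^N = \mc C \oplus \mc C^\perp$, which is natural because $f$ is uniformly convex on $\mc C^\perp$ while its double-well structure is carried by the diagonal $\mc C$. I would isolate a thin tube around $\mc C$ where a tensor-product analysis identifies exactly two small eigenvalues coming from an effective one-dimensional double well, while outside the tube the symbol $|\nabla f|^2-h\Delta f$ is large enough that no low eigenvalue can sit there.

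The setup starts with the reparametrization $x = (u/\sqrt N)\mbf 1 + y$, where $u := \sqrt N\,\overline x$ and $y := P^\perp x$. Expanding \eqref{definitionfWitten} gives
\[
f(x)\ =\ g(u)\ +\ \tfrac 14\lp y, H_u y\rp\ +\ \tfrac{\sqrt N\,u}{2}\sum_k y_k^3\ +\ \tfrac N8\|y\|_4^4,
\]
with $g(u) := \tfrac18(u^2-1)^2$ a dimension-independent 1D double well and $H_u := K - 1 + 3u^2$, whose spectrum on $\mc C^\perp$ is contained in $[\mu-1,+\infty)$ by the discrete Poincar\'e inequality \eqref{discretePoincare}. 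In particular $y=0$ is the unique minimum of $f(u,\cdot)$ for every $u$ and the effective 1D potential along $\mc C$ is $g$ with barrier $\tfrac18$, independent of $N$. I would then pick an IMS partition $\chi_{\rm in}^2 + \chi_{\rm out}^2 = 1$ with $\chi_{\rm in}$ supported in a tube $\{|u|\leq M,\ \|y\|\leq \rho\}$ with $\rho \sim N^{-1/2}$. In the outer region, using $P^\perp\nabla f = \tfrac12\bigl(H_u y + 3\sqrt N u\,P^\perp(y^2) + N\,P^\perp(y^3)\bigr)$, the linear $H_u y$ term dominates the nonlinear corrections as long as $\sqrt N|u|\|y\| + N\|y\|^2$ is small, giving $|\nabla f|^2 \geq c(\mu-1)^2\|y\|^2\geq c\rho^2$; outside that subregime, polynomial growth in $|u|$ or $\|y\|$ provides an even better lower bound. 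Under $N\leq Ch^{-\alpha}$ with $\alpha<3/4$, one has $\rho^2\sim 1/N \gtrsim h^{3/4}\gg h$, so by the min-max principle the outer block carries no eigenvalue below $\ell h$; the IMS penalty $h^2/\rho^2\sim Nh^2$ is also $o(h)$ in this regime.

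Inside the tube I would compare the localized operator, after the natural conjugation, with the tensor product of the 1D double-well Witten Laplacian $-h^2\partial_u^2 + (g')^2 - hg''$ and the harmonic Witten Laplacian in $y$ associated with the quadratic form $\tfrac14\lp y, H_u y\rp$. The 1D double-well piece is a textbook Helffer-Sj\"ostrand situation: it has exactly two eigenvalues below $\ell_0 h$ with $\ell_0 = 2g''(\pm 1) = 2$, all further eigenvalues being separated by a uniform gap. The $y$-piece is a quadratic Witten Laplacian with simple ground state and spectral gap $\geq h(\mu-1)$. Their tensor product accordingly has exactly two eigenvalues below $\min(\ell_0,\mu-1)h$. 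The anharmonic coupling $\tfrac{\sqrt N u}{2}\sum y_k^3 + \tfrac N8\|y\|_4^4$ enters as a perturbation that remains $o(h)$ at the harmonic $y$-scale $\|y\|\sim\sqrt h$ thanks to $\alpha<3/4$, and a standard min-max argument then shows that only two eigenvalues of $\Delta_{f,h}$ lie below $\ell h$.

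The main technical obstacle is precisely the uniform-in-$N$ matching between the inner and outer analyses. The three small parameters at play --- the tube width $\rho$ (weighed against the IMS cost $h^2/\rho^2$), the outer potential bottom $\rho^2/N$, and the cubic cross term $\tfrac{\sqrt N u}{2}\sum y_k^3$ evaluated at the harmonic $y$-scale --- must all be balanced simultaneously, and $\alpha<3/4$ is precisely the threshold that allows such a coherent choice. Once the tensor-product comparison is established with errors uniform in $N$ (in contrast with the fixed-$N$ spectral analyses of \cite{HKN}), the remaining 1D double-well semiclassical theory and the transverse harmonic-oscillator analysis go through by classical methods.
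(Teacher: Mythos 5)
Your overall architecture (IMS localization plus a two--scale decomposition along $\mc C\oplus\mc C^{\perp}$, transverse convexity, an effective one--dimensional double well on the diagonal) is the same as the paper's, but two of your three core steps have genuine gaps. The first is the outer region. You conclude that no eigenvalue below $\ell h$ lives on $\{\|y\|\geq\rho\}$ because the ``symbol'' $|\nabla f|^2-h\Delta f$ is bounded below by $c\rho^2\gg h$ there. But $|\nabla f|^2-h\Delta f$ contains the term $-\tfrac h2\tr(K-1)$, and $\tr K=\mu\sum_k\sin^2(k\pi/N)/\sin^2(\pi/N)\sim c\,\mu N^3$, so the Schr\"odinger potential is of order $-hN^3$ on most of the outer region; the fixed--dimension CFKS argument ``potential $\geq\ell h$ away from the critical points'' is exactly the step that does not survive the large--$N$ limit. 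The paper replaces it by the inequality $\Delta_{f,h}\geq\Delta^{\mc C^{\perp}}_{f,h}$ combined with the Bakry--\'Emery/Poincar\'e bound $\lp\psi_y,(\Delta^{\mc C^{\perp}}_{f,h}\psi)_y\rp\geq h(\mu-1)\big(\|\psi_y\|^2-\lp\psi_y,E_y\rp^2\big)$ for the conditional log--concave measure on each fiber $\{Px=y\}$, and then must control the rank--one defect, i.e. the mass of the transverse ground state $E_y$ on the support of the localized function, \emph{uniformly in $y\in\mc C$ including unbounded $y$}. That concentration estimate carries an unavoidable $e^{\gamma N}$ prefactor for unbounded $y$, and matching $e^{\gamma N}$ against $e^{-R_h^2/(\gamma h)}$ with $N\leq Ch^{-\alpha}$ and the cutoff scale $R_h\sim h^{\alpha/6}$ is precisely where the threshold $\alpha<\tfrac34$ comes from ($1-\tfrac\alpha3>\alpha$). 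Your proposal contains neither the rank--one correction nor this uniform-in-$y$ concentration step, and your heuristic for the origin of $\tfrac34$ does not match.

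The second gap is in the inner region. You treat the anharmonic coupling as an $o(h)$ perturbation of the tensor product ``1D double well $\otimes$ transverse harmonic Witten Laplacian'' at the harmonic scale. At that scale the transverse Gaussian has $\mathbb E\|y\|_4^4\sim Nh^2$ (cf.\ Lemma~\ref{momentsgaussian}), so $\tfrac N8\|y\|_4^4\sim N^2h^2$, which is \emph{not} $o(h)$ once $N\gg h^{-1/2}$; similarly $\sqrt N\,u\sum_ky_k^3$ is of order $N^{3/2}h^{3/2}$. So the perturbative tensor--product comparison, as stated, only covers $\alpha<\tfrac12$ (or worse), not the full range $\alpha<\tfrac34$. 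The paper avoids any harmonic approximation near the minima: it proves that $\hess f\geq\rho>0$ on the anisotropic neighbourhoods $\Omega_r=\{\|Px\mp J_+\|\leq r,\ \|P^{\perp}x\|\leq rN^{-1/4}\}$ (Lemma~\ref{le.Hess}), extends $f$ to a globally uniformly convex function (Lemma~\ref{lemmaextension}), and invokes the log--concave spectral gap, which handles the anharmonicity exactly; the transverse cutoff at scale $h^{\alpha/4}$ is chosen to fit inside $N^{-1/4}$ precisely when $N\lesssim h^{-\alpha}$. On the diagonal away from the minima the paper does use a symbol bound, but only the one--dimensional one, $|\nabla^{\mc C}f|^2-h\Delta^{\mc C}f$, whose Laplacian term $\tfrac12(3\hat x_0^2+3\|P^{\perp}x\|^2-1)$ is bounded on the relevant set — this is the only place where a potential--type lower bound is legitimate. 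To repair your proof you would need to replace both the outer symbol bound and the inner perturbative comparison by these convexity/log-concavity arguments together with the uniform concentration lemma.
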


\noindent
According to the Max-Min principle (see for example \cite[Theorem 11.7]{He2}), in order   to prove Theorem~\ref{th.CFKS}, it is sufficient to show that
there exist 
 $h_0, \ell>0$ such that
 for every  $h\in (0, h_0]$ and $N  \leq    C h^{-\alpha}  $, 
 there exist
 $E_+, E_-\in L^2(\mb R^N)$
   s.t.
for any $\psi\in \mc C^\infty_{{\rm c}}(\mb R^N;\mb R)$, 
\begin{equation}\label{goal}     \lp \psi, \Delta_{f,h}\psi   \rp   \    \geq      \    \ell   \,  h     \ 
\Big(   \   \|\psi\|^2_{L^2(\mb R^N)}    \ -    \  \lp \psi, E_+   \rp^2_{L^2(\mb R^N)}    \  -     \  
\lp \psi, E_ -  \rp^2_{L^2(\mb R^N)}     \   \Big)      \     . 
\end{equation}
To obtain estimate~\eqref{goal} we first follow a standard ``decoupling'' approach
by introducing a suitable partition of unity allowing to
split the integral on the left hand side of~\eqref{goal} into integrals 
over almost disjoint sets. These will be    localized   respectively
around the two minima of $f$, around the diagonal $\mc C$ but far from the minima, 
and far from the diagonal. The main tool here is the so-called IMS localization formula (see \cite{CFKS}). 

\begin{proposition}{(IMS Localization Formula)}
\label{pr.IMS}~\\
Let $d\in \mb N$ and $\{\eta_k\}_{1,\ldots,d}$ be a quadratic partition of unity 
of $\mb R^N$, i.e. such that $\eta_k\in \mc C_{{\rm c}}^\infty(\mb R^N)$ for every $k$ and $\sum_{k=1}^d \eta^2_k\equiv 1$. Then for every $\psi\in \mc C_{{\rm c}}^\infty(\mb R^N)$, 
\begin{equation}
\label{eq.IMS}
\lp \psi , \Delta_{f,h} \psi   \rp_{L^2(\mb R^N)}
   \   =     \  
\sum_{k=1}^{d}  \lp  \,  \eta_{k} \psi,  \,  \Delta_{f,h}   (  \eta_{k} \psi)   \,    \rp_{L^2(\mb R^N)}
        -   
h^2  \  \| \,  |    \nabla\eta_{k}|   \,      \psi\|_{L^2(\mb R^N)}^2    \;.
\end{equation}
\end{proposition}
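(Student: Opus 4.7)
The approach I would take exploits the standard fact that the Witten Laplacian admits the supersymmetric factorization $\Delta_{f,h}^{(0)} = \sum_{j=1}^N D_j^* D_j$, where $D_j := h\partial_j + \partial_j f$ acts on $C_c^\infty(\mathbb R^N)$ and $D_j^* = -h\partial_j + \partial_j f$ is its formal adjoint in $L^2(dx)$. This identity is already recalled in Subsection~\ref{subse.Witten}. Using it, one has for any $\varphi \in C_c^\infty(\mathbb R^N)$ the basic identity $\langle \varphi, \Delta_{f,h}\varphi\rangle = \sum_j \|D_j \varphi\|_{L^2(\mathbb R^N)}^2$, which reduces the proof of \eqref{eq.IMS} to a computation on the first-order operators $D_j$.

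The key step is to apply Leibniz to $D_j(\eta_k \psi) = \eta_k D_j \psi + h\,(\partial_j \eta_k)\, \psi$, take the squared $L^2$-norm and expand, then sum over $k$. Since $f$ (and hence $\eta_k$, $\psi$) are real-valued, this yields
\begin{equation*}
\sum_{k=1}^d \|D_j(\eta_k \psi)\|^2 \ =\ \sum_{k=1}^d \eta_k^2\,\|D_j\psi\|^2_{\text{pt}} \ +\ h^2 \sum_{k=1}^d (\partial_j \eta_k)^2 \|\psi\|^2_{\text{pt}} \ +\ 2h \int \psi\,(D_j\psi) \sum_{k=1}^d \eta_k \partial_j \eta_k \, dx\,,
\end{equation*}
where the subscript indicates pointwise modulus. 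The crucial observation is that the cross term vanishes: differentiating the partition-of-unity relation $\sum_k \eta_k^2 \equiv 1$ gives $\sum_k \eta_k \partial_j \eta_k \equiv 0$. Using in addition $\sum_k \eta_k^2 \equiv 1$ to simplify the first summand, one then obtains
\begin{equation*}
\sum_{k=1}^d \|D_j(\eta_k \psi)\|^2 \ =\ \|D_j \psi\|^2 \ +\ h^2\sum_{k=1}^d \|(\partial_j \eta_k)\,\psi\|^2\,.
\end{equation*}

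Summing over $j \in \{1,\dots,N\}$ and reinvoking the factorization to rewrite $\sum_j \|D_j(\eta_k\psi)\|^2 = \langle \eta_k\psi, \Delta_{f,h}(\eta_k\psi)\rangle$ and $\sum_j \|D_j \psi\|^2 = \langle \psi, \Delta_{f,h}\psi\rangle$, a straightforward rearrangement of terms yields \eqref{eq.IMS}. I do not anticipate any substantial obstacle here: the formula is a purely algebraic consequence of Leibniz's rule combined with the identity $\sum_k \eta_k \partial_j \eta_k = 0$, and the supersymmetric factorization makes the cancellation of first-order terms automatic, bypassing any need to separately track the cross terms coming from $|\nabla f|^2$ and $-h\Delta f$ in $\Delta_{f,h}$.
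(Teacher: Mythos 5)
Your proof is correct. Note that the paper itself does not prove Proposition~\ref{pr.IMS} at all --- it simply cites \cite{CFKS} --- so there is no in-paper argument to compare against; your write-up would in fact fill that gap. Your route via the supersymmetric factorization $\Delta_{f,h}^{(0)}=\sum_j D_j^* D_j$ with $D_j = h\partial_j+\partial_j f$ (which, as you implicitly note, is the correct placement of $h$; the formula displayed in Subsection~\ref{subse.Witten} has a harmless typo there) is a clean alternative to the classical proof in \cite{CFKS}, which proceeds by writing $H=\sum_k \eta_k H\eta_k+\tfrac12\sum_k[[H,\eta_k],\eta_k]$ and computing the double commutator $\tfrac12[[-h^2\Delta,\eta_k],\eta_k]=-h^2|\nabla\eta_k|^2$. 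The two arguments are essentially equivalent in content: your expansion of $\|D_j(\eta_k\psi)\|^2$ together with the identity $\sum_k\eta_k\partial_j\eta_k=\tfrac12\partial_j\bigl(\sum_k\eta_k^2\bigr)=0$ is exactly what makes the commutator localization error reduce to the gradient term, and the factorization automatically handles the zeroth-order pieces $|\nabla f|^2-h\Delta f$, which commute with multiplication by $\eta_k$ anyway. One cosmetic remark: your intermediate display mixes pointwise and integrated quantities (the ``$\|\cdot\|^2_{\rm pt}$'' under an implicit integral sign); in a final version you should either write the integrals explicitly or state the identity pointwise before integrating. Also, if $\psi$ is allowed to be complex-valued, the cross term is $2h\,{\rm Re}\int \eta_k\partial_j\eta_k\,(D_j\psi)\overline\psi\,dx$, which still vanishes after summing over $k$, so nothing is lost.
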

\noindent
The second main ingredient to obtain estimate~\eqref{goal} relies on the 
decomposition $\mb R^N   =     \mc C    \oplus \mc C^{\perp}$ and 
on a two-scale approach. 
We recall  that $\mc C= \range P$ is one-dimensional where
$P$ has been defined in~\eqref{projector}.
For any $\psi\in \mc C_{{\rm c}}^{\infty}(\mb R^N; \mb R)$, we then  have 
the decomposition
\begin{equation} \label{decompositiontwoscales}  \Delta_{f,h}   \psi    \   =   \    \Delta^{\mc C}_{f,h}  \psi     \ +  \   
\Delta^{\mc C^{\perp}}_{f,h }  \psi        \    ,   
\end{equation}
where
\[\Delta^{\mc C}_{f,h}  \  :=      -h^2\Delta^{\mc C}    \  +   \     |\nabla^{\mc C}  f |^2 \      -    \    h\Delta^{\mc C}  f  \   \   \text{ and }   \    \   \Delta_{f,h}^{\mc C^{\!\perp}}   \  :=    -h^2\Delta^{\mc C^{\!\perp}}      +   \     |\nabla^{\mc C^{\!\perp}} \!\! f |^2 \      -    \    h\Delta^{\mc C^{\!\perp}}     \!\!  f     . \]
Here, the superscripts $\mc C, \mc C^{\perp}$ on a differential
operator mean that differentiation is restricted to the corresponding subspace. 
Thus, chosing some normalized coordinate $y_0$ on $\mc C$
and orthonormal coordinates $(z_1, \dots, z_{N-1})$ on $\mc C^\perp$ we have
for example
for every $\psi\in \mc C^\infty(\mb R^N)$, 
\[      \Delta^{\mc C} \psi    \   =   \         \frac{\partial^2  \psi  }{\partial y_0^2}    \  \   
   \    ,    \   \  \     \Delta^{\mc C^{\perp}} \psi    \   =   \         \sum_{k=1}^{N-1} \frac{\partial^2 \psi  }{\partial z_k^2}        \   .     \]
Note in particular that the orthogonal decomposition
$$x\ =\ Px\,+\,P^\perp x\ =\  \hat x_{0}\left(\frac{1}{\sqrt N},\dots,\frac{1}{\sqrt N}\right)\,+\,P^\perp x
$$ 
leads to
\begin{equation}
\label{eq.nablax0}
\nabla^{\mc C}  f(x)\ =\  \frac 12 \sqrt N \sum_{k=1}^N \big(\frac{\hat x_{0}}{\sqrt N}+P^{\perp}x\big)_k^3\,-\,\frac12\hat x_{0}
\end{equation}
and
\begin{equation}
\label{eq.laplx0}
\Delta^{\mc C}  f(x)
\ =\  \frac12\big(3\hat x_{0}^2\,+\, 3\|P^\perp x\|^2  \,-\,1\big)\,.
\end{equation} 
Given $\psi:\mb R^N \ra \mb R$ and $y\in \mc C$, we denote by $\psi_y$ 
the partial application
\begin{equation}\label{conditionednotation} 
  \psi_y  \  :\ z\,\in\,\mc C^{\perp}\quad  \longmapsto\quad      \psi(y + z)      
\end{equation}
and hence satisfying  $\psi_y(\mc C^{\perp})=\psi(\{x\in\mb R^N\,:\, Px = y\})$.

\noindent
Roughly speaking, we shall exploit decomposition~\eqref{decompositiontwoscales} as follows.
Away from the diagonal, we use $\Delta_{f,h}^{\mc C}\geq 0$
and exploit~\eqref{uniformconvexity}, namely that $\hess^{\mc C^\perp}$ is strictly convex, uniformly in $x$
and $N\in \mb N$. This leads for every fixed $y\in \mc C$ to spectral gap
lower bounds for the operator $\psi_y\mapsto (\Delta^{\mc C^\perp}_{f,h} \psi)_y$
(see Lemma~\ref{lemmaconditionalpoincare} below). 
The  dependence on $y$ of these estimates is controlled in Lemma~\ref{le.concent}
below. Around the diagonal $\mc C$ but away from the critical points
we use $\Delta_{f,h}^{\mc C^\perp}\geq 0$
and work with $\Delta_{f,h}^{\mc C}$, which, when restricted to sufficiently small neighbourhoods of $\mc C$, behaves essentially like the $1$-dimensional 
Witten Laplacian associated with $f|_{\mc C}$ (see the discussion after~\eqref{arounddiagonal} below).
Around the minima $J_+$ and $J_-$ we work directly with $\Delta_{f,h}$. 
Here we use that the restriction of $f$ to sufficiently small neighbourhoods around $J_+$ and $J_-$ is uniformly convex, and thus, locally, good spectral gap
lower bounds can be obtained (see Lemma~\ref{le.Hess} and 
Proposition~\ref{BLlemma} below). 

\

\noindent
The rest of the section is organized as follows. In Subsection~\ref{preliminaryresults}
we make precise and prove some aforementioned preliminary results which are needed
for the proof of Theorem~\ref{th.CFKS}. In Subsection~\ref{proofCFKS}
we introduce a suitable quadratic partition of unity of $\mb R^N$
and give the proof of Theorem~\ref{th.CFKS}.

\subsection{Preliminary estimates}\label{preliminaryresults}

In the sequel we write $y$ and $z$ to denote generic elements of $\mc C$ and $\mc C^{\perp}$ 
respectively and $dy, dz$ for the Lebesgue measures on $\mc C$ and $\mc C^{\perp}$.
We recall also the notation defined in~\eqref{conditionednotation}.

\

\noindent
The combination of the following two lemmata allows to control the quadratic form 
$\lp \Delta_{f,h} \psi, \psi\rp_{L^2(\mb R^N)}$ away from the diagonal $\mc C$.

\begin{lemma}[Poincar\'{e} inequality for fixed $y\in \mc C$]\label{lemmaconditionalpoincare}
The following inequality holds true for every $h>0, N\in \mb N$, $\psi\in\mc C^\infty_{{\rm c}}(\mb R^N)$ and $y\in \mc C$: 
\begin{equation}\label{condpoincare}    \lp \,  \psi_y ,   \,  (\Delta^{\mc C^{\perp}}_{f,h}   \psi)_y   \, \rp_{L^2(\mc C^{\perp})}   \    \geq    \    
h \   (\mu-1)    \   \Big(    \   \|\psi_y\|^2_{L^2(\mc C^{\perp})}    \   -  \    
\lp  \, \psi_y , \,  E_y \,  \rp_{L^2(\mc C^{\perp})}^2     \    \Big)    \ ,  
\end{equation}
where $E:  \mb R^N \ra  \mb R$ is given by
\begin{equation}\label{conditionalgroundstate}
  E(x)     \    :=  \     \frac{e^{-\frac{f(x)}{h} }}{\big( \  \int_{\mc C^{\perp}} e^{-2\frac{f(Px+z)}{h}}  \ dz \ \big)^{\frac 12}   }
 \   .     
 \end{equation}
 \end{lemma}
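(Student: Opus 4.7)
The plan is, for fixed $y\in\mc C$, to recognize the operator $\psi_y\mapsto (\Delta^{\mc C^\perp}_{f,h}\psi)_y$ as the semiclassical Witten Laplacian $\Delta^{(0)}_{g_y,h}$ on $L^2(\mc C^\perp,dz)$ associated with the potential $g_y(z):=f(y+z)$, and then to reduce~\eqref{condpoincare} to a weighted Poincar\'{e} inequality via the ground state transformation, to which the Bakry-\'{E}mery criterion applies thanks to the uniform convexity~\eqref{uniformconvexity}.

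First, since $y$ is kept fixed, the chain rule gives $(\Delta^{\mc C^\perp}_{f,h}\psi)_y = \bigl(-h^2\Delta_z + |\nabla_z g_y|^2 - h\Delta_z g_y\bigr)\psi_y = \Delta^{(0)}_{g_y,h}\psi_y$. I would then set $w_y := e^{g_y/h}\psi_y$ and exploit the standard ground-state identity $h\,\nabla_z\psi_y + \psi_y\,\nabla_z g_y = h\,e^{-g_y/h}\,\nabla_z w_y$, which (after integration by parts) turns the Dirichlet form of $\Delta^{(0)}_{g_y,h}$ into
\[
\langle\psi_y,(\Delta^{\mc C^\perp}_{f,h}\psi)_y\rangle_{L^2(\mc C^\perp)} \ =\ h^2\int_{\mc C^\perp}|\nabla_z w_y|^2\,e^{-2g_y(z)/h}\,dz\,.
\]
On the other hand, noting that $P(y+z)=y$ for every $z\in\mc C^\perp$, the function $E_y$ from~\eqref{conditionalgroundstate} equals $e^{-g_y/h}/Z_y^{1/2}$ with $Z_y := \int_{\mc C^\perp} e^{-2g_y/h}\,dz$, and a direct computation yields
\[
\|\psi_y\|_{L^2(\mc C^\perp)}^2 \,-\, \langle\psi_y,E_y\rangle_{L^2(\mc C^\perp)}^2 \ =\ Z_y\,\variance_{\nu_y}(w_y)\,,\qquad d\nu_y\,:=\,Z_y^{-1}e^{-2g_y/h}\,dz\,.
\]

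With these identifications, inequality~\eqref{condpoincare} is equivalent to the Poincar\'{e} inequality $\tfrac{\mu-1}{h}\,\variance_{\nu_y}(w_y)\,\leq\,\int_{\mc C^\perp}|\nabla_z w_y|^2\,d\nu_y$ for the probability measure $\nu_y$ on $\mc C^\perp$, which I would establish via the Bakry-\'{E}mery criterion (Proposition~\ref{BakryEmery}). The key input is that the rescaling $f(x)=V(\sqrt N x)/(2N)$ yields $\hess f(x) = \tfrac12\,\hess V(\sqrt N x)$, and so~\eqref{uniformconvexity} gives, uniformly in $y$, $z$ and $N$,
\[
\hess_z\!\left(\tfrac{2g_y}{h}\right)\big|_{\mc C^\perp\times\mc C^\perp}\ =\ \tfrac{1}{h}\,\hess V(\sqrt N(y+z))\big|_{\mc C^\perp\times\mc C^\perp}\ \geq\ \tfrac{\mu-1}{h}\,.
\]
Proposition~\ref{BakryEmery} then delivers $\rho_{2g_y/h}\geq (\mu-1)/h$, and linearizing~\eqref{standdeflogsob} about the constant $1$ upgrades this log-Sobolev bound to the Poincar\'{e} inequality with the same constant, which is precisely~\eqref{condpoincare} after clearing the factor $h^2 Z_y$. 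There is no real obstacle here: the computations are standard ground-state manipulations and the uniform $(\mu-1)$-convexity of $V$ in the $\mc C^\perp$ directions has already been established.
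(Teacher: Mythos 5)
Your proposal is correct and follows essentially the same route as the paper: the paper also rewrites \eqref{condpoincare} via the ground state transformation as a Poincar\'e inequality for the probability measure $E_y^2\,dz$ on $\mc C^\perp$ and concludes from the uniform convexity bound $\frac1h\hess^{\mc C^\perp}(2f)(x)=\frac1h\hess^{\mc C^\perp}V(\sqrt N x)\geq\frac{\mu-1}{h}$ together with the Bakry--\'Emery criterion (equivalently, standard spectral gap results for strictly log-concave measures). Your explicit identities for the Dirichlet form and the variance, and the factor bookkeeping with $Z_y$, are exactly what the paper leaves implicit.
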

\begin{proof}
Note that~\eqref{condpoincare} is equivalent to the inequality
\begin{equation*}
 \int_{\mc C^{\perp}} \!\!  \|\nabla^{\mc C^\perp}\! (E^{-1} \psi)_y\|^2   E^2_y    dz   
    \    \geq    \    
   \frac{\mu-1 }{h}   \   \Big(    \int_{\mc C^{\perp}}  \!\!
  (E^{-1} \psi)_y^2   E^2_y  dz 
    \   -  \ 
\big(\!\!\int_{\mc C^{\perp}} \!(E^{-1}\psi)_y    E_y^2  dz\big)^2     \Big)
\end{equation*}
which  follows from the uniform convexity
estimate
 $$\frac1h\hess^{\mc C^\perp}2 f (x)\ =\  
\frac1h\hess^{\mc C^\perp} V(\sqrt N x)\ \geq
\ \frac{\mu-1 }{h}
$$
implied by~\eqref{uniformconvexity}, and 
standard criteria for the spectral gap of strictly log-concave measures
(see for example \cite[Corollary 11.4]{Le} or the already used Bakry-\'{E}mery criterion of Proposition~\ref{BakryEmery} which
gives an even stronger result). 
\end{proof}

\noindent
Note that by integrating the relation~\eqref{condpoincare}
of Lemma~\ref{lemmaconditionalpoincare} in $y$ and using the Cauchy-Schwarz inequality, 
we get that for every $h>0,\  N\in \mb N$ and $\psi\in C^\infty_c(\mb R^N)$, 
\begin{equation} \label{Poincare+CS}  
   \lp \,  \psi ,   \,  \Delta^{\mc C^\perp}_{f,h}   \psi   \, \rp_{L^2(\mb R^N)}   \    \geq    \    
h    (\mu-1)  \,    \|\psi\|^2_{L^2(\mb R^N)}    \Big(       1       -      
\sup_{y\in \supp \psi}   \int_{\supp \psi_y}   
E^2_{y}(z)    \  dz      \Big)      . 
\end{equation}
In order to fully exploit Estimate~\eqref{Poincare+CS}, we need a control
on the integral appearing on its right hand side when $\psi$
is localised far from the diagonal. The following rough tail estimate
will be enough for our purposes.

\begin{lemma}[Concentration Lemma]
\label{le.concent}

\

\noindent
Let $h$, $R_{h}$ and $\rho$ be three positive numbers. Then there exist $h_0>0$ and $\gamma>0$
such that for every $h\in (0, h_0]$ and  $N\in \mb N$,  
the function $E$ defined in 
\eqref{conditionalgroundstate} satisfies
\begin{equation}\label{concentrationbounded}   \sup_{\|y\|\leq \rho} \int_{\{\|z\|\geq R_h\}}      E^2_y(z)    \    dz       \     \leq    \    
\min \{\gamma  \, e^{- \frac{R_h^2}{\gamma h}} \,  ,   \, 1\}      \  ,       
\end{equation}
and 
\begin{equation}\label{concentrationunbounded}    \sup_{y\in \mc C} \int_{\{\|z\|\geq R_h\}}      E^2_y(z)    \    dz       \     \leq    \    
\min\{e^{\gamma N}  \, e^{- \frac{R_h^2}{\gamma h}}   \,  ,   \, 1\}    \  .       
\end{equation}
\end{lemma}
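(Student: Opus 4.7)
The first key observation is that, for each $y\in\mc C$, the function $z\in\mc C^\perp\mapsto E_y^2(z)$ is a probability density on $\mc C^\perp$ (the denominator in the definition~\eqref{conditionalgroundstate} of $E$ is precisely its normalization). Its negative log-density $-\log E_y^2(z)=2f(y+z)/h+\mathrm{const}$ has Hessian bounded below by $\kappa:=(\mu-1)/h$ on $\mc C^\perp$, by the uniform convexity estimate~\eqref{uniformconvexity}, so $E_y^2\,dz$ is $\kappa$-strongly log-concave on $\mc C^\perp$. The second key observation is that $z=0$ is the unique mode of this measure for every $y\in\mc C$: indeed, for $y\in\mc C$ one has $Ky=0$, hence $\nabla f(y)=\bigl(\tfrac{N\bar y^3}{2}-\tfrac{\bar y}{2}\bigr)(1,\dots,1)\in\mc C$ and the $\mc C^\perp$-component of $\nabla f(y)$ vanishes.

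My plan is then to estimate separately the numerator and the denominator of
\[
\int_{\{\|z\|\geq R_h\}}E_y^2(z)\,dz\;=\;\frac{\int_{\{\|z\|\geq R_h\}}e^{-2f(y+z)/h}\,dz}{\int_{\mc C^\perp}e^{-2f(y+z)/h}\,dz}\,.
\]
For the numerator, the convexity inequality $f(y+z)-f(y)\geq\tfrac{\mu-1}{4}\|z\|^2$ (which follows from the Hessian lower bound together with the vanishing of the $\mc C^\perp$-component of $\nabla f(y)$) gives the bound
\[
\int_{\{\|z\|\geq R_h\}}e^{-2f(y+z)/h}\,dz\;\leq\;e^{-2f(y)/h}\int_{\{\|z\|\geq R_h\}}e^{-(\mu-1)\|z\|^2/(2h)}\,dz\,,
\]
from which the desired Gaussian decay $e^{-R_h^2/(\gamma h)}$ follows by~\eqref{eq.tail} applied in the worst radial direction.

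The matching lower bound on the denominator is the heart of the proof. Expanding $f(y+z)$ from~\eqref{definitionfWitten} and completing squares yields the identity
\[
2f(y+z)\;=\;2f(y)+\tfrac{1}{2}\langle z,(K-1+N\bar y^2)z\rangle+\tfrac{N}{4}\sum_{k=1}^N z_k^2(z_k+2\bar y)^2\,,
\]
in which the last term is nonnegative. Hence $\int_{\mc C^\perp}e^{-2f(y+z)/h}\,dz$ is bounded from below by $e^{-2f(y)/h}$ times a Gaussian integral with a $\bar y$-dependent correction; the $e^{-2f(y)/h}$ factors then cancel in the ratio, and one is reduced to lower-bounding the Gaussian integral over a suitable neighbourhood of $z=0$ on which the quartic correction is tamed. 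Under the assumption $\|y\|\leq\rho$ of~\eqref{concentrationbounded}, one has $|\bar y|\leq\rho/\sqrt N$ and the quartic term is controlled on a ball of fixed radius; the resulting Gaussian integral is then controlled uniformly in $N$ thanks to the trace-class bound~\eqref{traceclass}. In the unconstrained regime of~\eqref{concentrationunbounded}, no such smallness on $\bar y$ is available, and the loss in handling the quartic correction uniformly in $y$ produces the announced exponential prefactor $e^{\gamma N}$; the trivial bound $\min\{\,\cdot\,,1\}$ takes care of the regimes where this prefactor would otherwise be too weak.

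The main obstacle is precisely this uniform lower bound on the denominator, due to the fact that the operator $K$ has operator norm $\|K\|\sim\mu N^2/\pi^2$ growing with $N$: the strategy must rely on the trace of $K^{-1}$ rather than on $\|K\|$, exactly as exploited in Lemma~\ref{momentsgaussian} and Lemma~\ref{easytail}, so that all Gaussian normalisations remain controlled in a dimension-free way.
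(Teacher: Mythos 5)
The key difficulty in this lemma is obtaining a bound on the tail mass of $E_y^2\,dz$ that decays like $e^{-R_h^2/(\gamma h)}$ with a constant $\gamma$ \emph{uniform in $N$}; your proposal has a genuine gap precisely at this point, in the numerator bound.

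You propose to bound the numerator by $e^{-2f(y)/h}\int_{\{\|z\|\geq R_h\}}e^{-(\mu-1)\|z\|^2/(2h)}\,dz$ and then invoke~\eqref{eq.tail} ``in the worst radial direction.'' This cannot give a dimension-free estimate. The centred Gaussian with scalar covariance $\frac{h}{\mu-1}\mathrm{Id}$ on $\mc C^\perp$ is \emph{not} concentrated near the origin uniformly in $N$: its $L^2$-norm concentrates around $\sqrt{(N-1)h/(\mu-1)}$, so for $N$ large the tail event $\{\|z\|\geq R_h\}$ carries almost all of its mass. Concretely, this Gaussian integral is of the order of $(2\pi h/(\mu-1))^{(N-1)/2}$, whereas the denominator you want to divide by is of the order of $(2\pi h)^{(N-1)/2}/\det^{1/2}(K-1+N\bar y^2)$ (using the lower bound from your completed-square identity). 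Their ratio is governed by $\prod_{k=1}^{N-1}\frac{\nu_k-1+N\bar y^2}{\mu-1}$, which blows up super-exponentially in $N$ since $\nu_k\sim\mu k^2$. Replacing the full convexity structure of $K-1$ by the scalar lower bound $\mu-1$ throws away precisely the information (the growth of the $\nu_k$'s, i.e.\ the trace-class property~\eqref{traceclass}) that makes the statement true. There is also no ``worst radial direction'' available here: $\|z\|\geq R_h$ does not imply $|z_k|\geq R_h$ in any coordinate, and applying a one-dimensional tail estimate to a typical coordinate would only give $e^{-cR_h^2/(Nh)}$.

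What is missing is an exponential Markov (Chernoff) step that keeps the \emph{full} $\mc C^\perp$-covariance in play. The paper writes $\mathbf 1_{\{\|z\|\geq R_h\}}\leq e^{\tau(\|z\|^2-R_h^2)/h}$ for a small $\tau\in(0,\frac{\mu-1}{2})$, absorbs $e^{\tau\|z\|^2/h}$ into the Gaussian to shift the covariance from $K-1+N\bar y^2$ to $K-1+N\bar y^2-2\tau$, and is left with the determinant ratio $\Theta(\tau,N,y)$ of~\eqref{pxdeterminants}. It is precisely this ratio of two determinants of operators differing by a bounded perturbation of $K$ that converges (as in Lemma~\ref{lemmaconvergenceratiodet}) or at worst grows like $e^{CN}$, yielding~\eqref{concentrationbounded} and~\eqref{concentrationunbounded}. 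Your denominator plan is also not quite right as stated: restricting the Gaussian to a ball $\{\|z\|\leq r\}$ to tame $\tfrac{N}{4h}\sum_k z_k^2(z_k+2\bar y)^2$ does not control this quartic term uniformly in $N$ (on such a ball it can be of order $Nr^4/h$), and this is why the paper instead uses the pointwise inequality $1-e^{-t}\leq t$ and the moment bound of Lemma~\ref{momentsgaussian}, which exploits that the diagonal entries of $h(K-1+\cdot)^{-1}$ are $O(h/N)$. Your completed-square identity and your observation on the sign of the quartic remainder are correct and useful (they give a clean version of~\eqref{lowerBf}), but they do not replace the exponential Markov step, which is the crux of the proof.
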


\begin{proof} For every $\tau\in \mb R^+$, $h>0$, $N\in \mb N$
and $y\in \mc C$,  we have the upper bound
\begin{gather}\label{exponentialMarkov}
\int_{\|z\| \geq R_h} E_y^2(z)    \ dz     \   \leq    \    e^{-\tau\frac{ R^2_h}{h}}  \  
\int_{\mc C^{\perp}} e^{\tau \frac{\|z\|^2}{h}}  \   E_y^2(z)    \ dz 
     \  . 
\end{gather}
\noindent
To estimate the integral on the right hand side of~\eqref{exponentialMarkov},
we shall use the following two bounds on $f$:
\begin{equation}\label{upperBf}
2f(x)    \ -\ 2f(Px) \    \leq\     
 \frac {3N}{4}     \|P^{\perp} x\|_4^4   
\ +\    
\frac 12\lp \, P^{\perp}x\,, \big(K-1  +   4\|Px\|^2 \big) \, P^{\perp}x \, \rp    
      \  ,
\end{equation}
and
\begin{equation}\label{lowerBf}      2f(x)       \ -\ 2f(Px)  \   \geq\   \frac 12\lp \, P^{\perp}x\,, (K-1  +   \|Px\|^2) \, P^{\perp}x \,\rp  
 \   .  
\end{equation}
Estimate~\eqref{lowerBf} follows immediately from 
the definition~\eqref{definitionfWitten} of $f$ and from the inequalities
\[    \frac N4 \|x\|_4^4  \   \geq     \ \frac 14 \|x\|^4   \ =   \  \frac 14 (\|Px\|^2 +  \|P^{\perp}x\|^2 )^2   \   \geq     \   \frac 12 \|Px\|^2 \, \|P^{\perp}x\|^2  \ +\ 
\frac14\|Px\|^4  \]
together with $\|Px\|^4=N\|Px\|^4_{4}=N^2\overline x^4$.
To see~\eqref{upperBf},  note first that from the definition of $f$,\begin{eqnarray*}
 2f(x)  \ -\ 2f(Px)   \  =   \    
  \frac {N}{4}     \|P^{\perp} x\|_4^4    & + &   N \overline x \sum_{k=1}^N (P^{\perp} x)_k^3 \\  
\quad && +     \ 
\frac 12\lp \, P^{\perp}x, \big(K-1  +   3\|Px\|^2 \big) \, P^{\perp}x \, \rp         \  , 
\end{eqnarray*}
so~\eqref{upperBf} is a consequence of the elementary inequalities 
\begin{eqnarray*}  \big|N     \overline x \sum_{k=1}^N (P^{\perp} x)_k^3    \big|  \leq
\sqrt N \|Px\|  \|P^{\perp}x\|_4^2     \|P^{\perp} x\| 
   \leq
\frac12 \|Px\|^2  \|P^{\perp}x\|^2     +      \frac N2 \|P^{\perp}x\|_4^4    \,  .
\end{eqnarray*}
From~\eqref{exponentialMarkov}, together with~\eqref{upperBf},~\eqref{lowerBf}
and computations of Gaussian integrals, 
we obtain for every $h>0$, $N\in \mb N$ and for every $\tau\in (0,\frac{\mu- 1}{2})$, 
\begin{equation}\label{Thetaepsilon}
\int_{\|z\| \geq R_h} E_y^2(z)    \ dz     \   \leq    \    e^{-\tau \frac{R^2_h}{h}}  \  
   \frac {\Theta(\tau, N, y)}{1- \epsilon(h,N,y)}     \   ,   
\end{equation}
where
\begin{equation}\label{pxdeterminants}
\Theta (\tau, N, y)    \   :=   \   \Big( \frac{\det(K-1 + 4\|y\|^2)}{\det (K-1 +\|y\|^2 - 2\tau)}   \Big)^{\frac 12}    \ ,    
\end{equation}
and 
\begin{gather*}
 \epsilon(h,N, y)    \  :   =   \  \frac{\big(  \det (K-1 + 4\|y\|^2)  \big)^{\frac 12}}{(2\pi h)^{\frac N2}}    \int_{\mc C^{\perp}} \!\!  \big( 1 - e^{-\frac{3N\|z\|_4^4}{4h}}\big)    e^{-\frac{\lp z, (K-1+4\|y\|^2)z\rp}{2h}} dz    .   
\end{gather*}
As in the proof of Proposition~\ref{quasimodal1}, we use the simple estimate
\begin{gather*}
 |\epsilon(h,N, y)|    \  \leq    \  \frac{\big(\det (K-1 + 4\|y\|^2)\big)^\frac12}{(2\pi h)^\frac N2}
   \frac{1}{4h} \int_{\mc C^{\perp}}   3N\|z\|_4^4  \   e^{-\frac{\lp z, (K-1+4\|y\|^2)z\rp}{2h}}    \ dz     \  ,   
\end{gather*}
and conclude, by applying a straightforward modification of Lemma~\ref{momentsgaussian}, that there exists a constant $C>0$ such that
for every $N\in \mb N$, $h\in (0,1]$ and $y\in \mb R^N$, 
\begin{equation}\label{epsilonhNy}
 |\epsilon(h,N, y)|    \  \leq    \    C\   h     \  .   
\end{equation}
In order to control $\Theta(\tau, N, y)$, we fix $\tau\in (0, \frac38(\mu-1))$
so that $\Theta(\tau, N, y)$ increases with $\|y\|$
(for any fixed $N$),
and  observe that, 
arguing as in the proof of Lemma~\ref{lemmaconvergenceratiodet}, 
for every $\rho>0$ there exists
a constant $C>0$ such that
\begin{equation}\label{Thetabounded}
  \forall\,N\,\in\,\mathbb N\ ,\quad        \sup_{\|y\|\leq \rho} \Theta(\tau, N, y)     \   \leq    \   C     \    .  
\end{equation}
If $y$ is not constrained to a compact set, we get the existence of
a constant $C>0$ such that  for every $N\in \mb N$, $y\in \mb R^N$, 
\begin{equation}\label{Thetaunbounded}  \Theta (\tau, N, y)    \   \leq    \     4^{\frac N2}    \   \leq   e^{CN}    \  .     \end{equation}
Thus, from~\eqref{Thetaepsilon}, taking $h_0$ and $\gamma^{-1}$ sufficiently small, 
one obtains~\eqref{concentrationbounded}  according to~\eqref{epsilonhNy},~\eqref{Thetabounded}
 and one obtains~\eqref{concentrationunbounded}
according to~\eqref{epsilonhNy},~\eqref{Thetaunbounded}. 

\end{proof}

\noindent
The following lemma shows the existence of a suitable neighbourhood of 
the minimum $J_+$ on which $f$ is uniformly convex. 
Note that by symmetry arguments, the analogous statement holds with $J_-$
instead of $J_+$.

\begin{lemma}[Uniform Convexity around the minima]
\label{le.Hess}
There exist  constants $r, \rho>0$ such that  
 \[    \forall N\in \mb N  \  ,    \   \forall  x\in \Omega_{r}    \  ,   \   \    \      \hess f(x) \     \geq\    \rho         \      \  , \] 
where the set $\Omega_r$ is given by
\[    \Omega_r    \  :  =    \   \big\{   \  x\in \mb R^N  \   :   \  \|Px     -    J_+\| \  \leq \  r  \  , \
\|P^{\perp}x\| \  \leq    \  r    N^{-\frac 14} \  \big\}    \  .  \]
\end{lemma}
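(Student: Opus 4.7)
The plan is to compute $\hess f(x) = \frac{3N}{2}\,\mathrm{diag}(x^2) + \frac{1}{2}(K-I)$ directly from the definition $f(x) = V(\sqrt N x)/(2N)$, and then, for arbitrary $w \in \mb R^N$, to decompose $w = Pw + P^\perp w$ and control the three resulting pieces of $\lp w, \hess f(x) w\rp$ uniformly in $N$. A key observation is that $\frac{1}{2}(K-I)$ preserves both $\mc C$ and $\mc C^\perp$ (it equals $-\frac{1}{2}I$ on $\mc C$, and its spectrum on $\mc C^\perp$ is bounded below by $\frac{\mu-1}{2}$ thanks to~\eqref{discretePoincare}), so only the positive semidefinite term $\frac{3N}{2}\,\mathrm{diag}(x^2)$ contributes to the cross term between $Pw$ and $P^\perp w$.

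The two diagonal blocks are handled directly. For $Pw = \overline w \mbf 1$: using $\sum_k x_k^2 = N\overline x^2 + \|P^\perp x\|^2 \geq (1-r)^2$ (since $\|Px - J_+\|\leq r$ and $\|J_+\|=1$ force $\|Px\|\in[1-r, 1+r]$), one gets $\lp Pw, \hess f(x) Pw\rp \geq \frac{3(1-r)^2 - 1}{2}\|Pw\|^2$, which is positive for $r$ sufficiently small. For $v := P^\perp w$, the pointwise inequality $\sum_k x_k^2 v_k^2 \geq 0$ combined with $\lp v, (K-I)v\rp \geq (\mu-1)\|v\|^2$ immediately yields $\lp v, \hess f(x) v\rp \geq \frac{\mu-1}{2}\|v\|^2$.

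The decisive step is the cross term
\[
2\lp Pw, \hess f(x) v\rp \,=\, 3N\,\overline w \sum_{k=1}^N x_k^2 v_k\,.
\]
Writing $x_k = \overline x + \xi_k$ with $\xi := P^\perp x$ and crucially exploiting that $\sum_k v_k = 0$, the leading $\overline x^2$-contribution cancels, leaving $\sum_k x_k^2 v_k = 2\overline x \lp \xi, v\rp + \sum_k \xi_k^2 v_k$. Combined with $|\overline w| = \|Pw\|/\sqrt N$, $|\overline x|\leq (1+r)/\sqrt N$, and the crucial bound $\|\xi\|\leq r N^{-1/4}$ from the definition of $\Omega_r$, the Cauchy-Schwarz inequality then yields $|2\lp Pw, \hess f(x) v\rp|\leq C\,r\,(\|Pw\|^2 + \|v\|^2)$ uniformly in $N$ for some absolute constant $C$. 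Combining the three estimates, we obtain $\lp w, \hess f(x) w\rp \geq (c_1 - Cr)\|Pw\|^2 + (c_2 - Cr)\|v\|^2$ with $c_1, c_2 > 0$ depending only on $\mu$; choosing $r > 0$ small enough makes both coefficients bounded below by a uniform $\rho > 0$.

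The main obstacle is precisely this cross term: naive estimates not exploiting the zero-mean condition $\sum_k v_k = 0$ produce an $O(N^{1/4})$ contribution which would destroy uniformity in $N$. It is exactly this cancellation (which reduces the dominant scale from $\overline x^2$ to $\overline x \cdot \xi$), together with the tight constraint $\|P^\perp x\|\leq rN^{-1/4}$ built into the definition of $\Omega_r$, that allows the cross term to be absorbed into the positive diagonal contributions uniformly in $N$.
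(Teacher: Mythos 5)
Your proposal is correct and follows essentially the same route as the paper: both proofs compute $\hess f(x)=\tfrac{3N}{2}\,{\rm diag}(x^2)+\tfrac12(K-1)$, decompose along $\mc C\oplus\mc C^\perp$, exploit the cancellation of the leading $\overline x^{\,2}\sum_k(P^\perp w)_k$ contribution in the cross term, and absorb the remaining cross terms using the constraint $\|P^\perp x\|\leq rN^{-\frac14}$. The only (cosmetic) difference is that you block-diagonalize $K-1$ exactly, whereas the paper bounds $\lp w,(K-1)w\rp$ on the full vector $w$ via the discrete Poincar\'e inequality~\eqref{discretePoincare} and then recovers the lost $\|Pw\|^2$ from the quartic term.
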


\begin{proof}
For $N\in \mb N$ take $x, w\in \mb R^N$ with $\|w\|=1$. Then, recalling~the expression of $f$ given in \eqref{definitionfWitten}, we get 
\begin{equation}\label{hessian}
2 \lp w, \hess f(x) w\rp    \ =  \       \lp w, (K-1)w\rp      \    +   \     3 N   \sum_{k=1}^{N-1} x_k^2 w_k^2      \  .    
\end{equation}
For the first term in~\eqref{hessian}, the discrete 
Poincar\'{e} inequality~\eqref{discretePoincare} gives 
with  $\rho := \frac 14\min\{\mu-1, 1\}$ the lower bound
\begin{gather}\label{discretePoincarehessian} 
 \lp w, (K-1)w\rp      \   \geq   \
    4\rho \|w\|^2  \   -     \      (1+4\rho) \|Pw\|^2   
       \    
 \geq    \   4\rho   \   -     \   2  \|Pw\|^2   
      \    .    \end{gather}
To estimate the second term in~\eqref{hessian},
we use the decomposition $\id = P + P^{\perp}$
and a
straightforward computation yields
\begin{eqnarray} 
 3  N \sum_{k=1}^N x_k^2 w_k^2    &\geq& 
 3   N \sum_{k=1}^N \big(  \overline x^2  +  2 \overline x (P^{\perp}x)_k + (P^{\perp}x)_k^2   \big)   
 \big( \overline w^2  +  2 \overline w (P^{\perp}w)_k   \big)    \nonumber  \\ 
       &\geq&  3 \|Px\|^2  \|Pw\|^2     -         12 \|Px\|    \|P^{\perp}x\|     -    6
\sqrt N    \|P^{\perp}x\|^2        \,   .\label{quartichessian} 
 \end{eqnarray}
Note that by the triangular inequality, 
we have  the two uniform bounds for every $r>0$,
\begin{equation*}
\forall N\in \mb N   ,  \ \forall \,  x\in \Omega_r      \  ,   \   \   \      \    \    
  1-r    \     \leq    \  \|Px\|  \   \leq   \   1 +r          \ . 
\end{equation*}  
Thus, estimate~\eqref{quartichessian} gives for every $r>0$, $N\in \mb N$
and $x\in \Omega_r$, 
\begin{gather*}
3  N \sum_{k=1}^N x_k^2 w_k^2    \  \geq     \    3(1-r)^2   \|Pw\|^2    \   -    \   12 (1+r) rN^{-\frac14}
\  -      \  6   r^2      \   .
\end{gather*}
Taking $r>0$ sufficiently small we get 
$3 N   \sum_k x_k^2 w_k^2    \geq         2  \|Pw\|^2       -    2 \rho  $, 
which together with~\eqref{hessian} and~\eqref{discretePoincarehessian}
finishes the proof. 

\end{proof}

\noindent
The preceding Lemma~\ref{le.Hess} is used to establish the 
localized spectral gap estimate of Proposition~\ref{BLlemma} below.
As in the proof
of Lemma~\ref{lemmaconditionalpoincare}, we argue by means of standard results for strictly log-concave measures. To reduce to the standard situation, we use the following 
general result on convex extensions, for which we provide a proof for the sake 
of completeness. 

\begin{lemma}\label{lemmaextension}
Fix $d\in \mb N$. Let $\vp\in \mc C^\infty(\mb R^d)$ and $A$ be
a compact and convex subset of $\mb R^d$ such that
\[\exists \,\varepsilon>0\ ,\  \exists\,C>0\quad \text{s.t.}\quad\Hess \vp\,\geq\, C
\  \text{on}\   A_{\varepsilon}:=\left\{x+y\ ;\ x\in A\,,\ \|y\|\leq\varepsilon\right\}   \  .     \]
Then there exists $\tilde \vp\in \mc C^\infty(\mb R^d)$ such that 
\[     \forall x\in A     \ ,    \  \   \   \tilde \vp (x)   =         \vp(x)     \ 
\  \    \text{ and  }     \  \   \    \forall x\in \mb R^d     \ ,    \  \   \     \Hess \tilde\vp(x)   \     \geq   \     C       \  . \]
\end{lemma}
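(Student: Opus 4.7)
First I would reduce to the case $C=0$ by writing $\vp = \frac{C}{2}\|x\|^2 + g$. The hypothesis then reads $\hess g\geq 0$ on $A_\varepsilon$, and it suffices to produce a globally smooth \emph{convex} extension $\tilde g$ of $g|_A$; one then sets $\tilde\vp := \frac{C}{2}\|x\|^2 + \tilde g$. The crucial observation is that since $A$ is convex and contained in $A_\varepsilon$, for any $x,y\in A$ the segment $[y,x]$ lies in $A_\varepsilon$, so Taylor's formula with integral remainder combined with $\hess g\geq 0$ yields $g(x)\geq g(y)+\lp\nabla g(y),x-y\rp$ for all $x,y\in A$. Hence the ``supporting-plane envelope''
$$
\bar g(x)\,:=\,\sup_{y\in A}\bigl(g(y)+\lp\nabla g(y),x-y\rp\bigr)
$$
is a globally Lipschitz convex function on $\mb R^d$ (being a supremum of a compact family of affine functions) equal to $g$ on $A$. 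Running the same argument on the slightly fattened compact convex set $\overline{A_{\varepsilon/3}}\subset A_\varepsilon$ shows that in fact $\bar g=g$ on a neighbourhood of $A$, so the mollification $\bar g_\delta:=\bar g*\rho_\delta$, which is smooth and convex as the convolution of a convex function with a non-negative kernel, satisfies $\bar g_\delta\to g$ in $\mc C^2$ uniformly on $A$ as $\delta\to 0$.

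The extension is then obtained by gluing: pick a cutoff $\chi\in\mc C_c^\infty(A_{2\varepsilon/3};[0,1])$ equal to $1$ on $A_{\varepsilon/3}$ and set
$$
\tilde g(x)\,:=\,\chi(x)g(x)+(1-\chi(x))\bar g_\delta(x)+M\,\Theta(x),
$$
where $\Theta\in\mc C^\infty(\mb R^d;[0,\infty))$ is a convex bump vanishing on $A$, positive outside, and with a uniform positive Hessian lower bound on the transition annulus $\{0<\chi<1\}$. On $A$ one has $\chi\equiv 1$ and $\Theta\equiv 0$, hence $\tilde g=g$; outside $\supp\chi$, $\tilde g=\bar g_\delta+M\Theta$ is convex; and on the transition annulus a direct computation gives $\hess\bigl(\chi g+(1-\chi)\bar g_\delta\bigr)=\chi\hess g+(1-\chi)\hess\bar g_\delta$ plus a bounded perturbation involving $\hess\chi$, $\nabla\chi$, $g-\bar g_\delta$ and $\nabla(g-\bar g_\delta)$. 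Choosing $\delta$ small first (to shrink this perturbation) and then $M$ large enough (so that $M\hess\Theta$ dominates what remains), one obtains $\hess\tilde g\geq 0$ globally, and adding back $\frac{C}{2}\|x\|^2$ produces the required $\tilde\vp$.

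\textbf{The main obstacle} is the construction of the smooth convex bump $\Theta$ with its three competing requirements: global $\mc C^\infty$ smoothness, vanishing on the whole of $A$, and a uniform positive Hessian lower bound on the thin annulus on which $\chi$ transitions. The natural candidate $x\mapsto \operatorname{dist}(x,A)^2$ is convex and vanishes on $A$ but is only $\mc C^{1,1}$ in general; mollifying it recovers smoothness but destroys exact vanishing on $A$, which then has to be restored by subtracting a small $\mc C^\infty$ correction supported strictly outside $A$, whose $\mc C^2$ norm is itself controlled by the mollification parameter. The delicate point of the argument is to balance these three small parameters (the mollification of $\bar g$, the mollification of $\operatorname{dist}(\cdot,A)^2$, and the coupling constant $M$) so that the cutoff-induced Hessian defect is absorbed by $M\hess\Theta$; once $\Theta$ is available, the rest of the proof is a routine bookkeeping of bounded terms.
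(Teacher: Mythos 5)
You have the right skeleton — reduce to $C=0$, keep $\vp$ near $A$, glue with a cutoff, and absorb the cutoff-induced Hessian defect by adding a large multiple of a globally smooth convex function $\Theta$ vanishing on $A$ but with positive-definite Hessian where the cutoff transitions. This is exactly the architecture of the paper's proof (with $\Theta$ there called $g$, and without the detour through the supporting-plane envelope $\bar g$ and its mollification, which the paper dispenses with by gluing $\vp$ directly to $\alpha g$). But you have correctly flagged the construction of $\Theta$ as ``the main obstacle'' and then left it unresolved; that obstacle is precisely the content of the paper's argument, and the candidates you sketch do not work as stated.

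Concretely, the mollification of $\operatorname{dist}(\cdot,A)^2$ is smooth and convex, but its Hessian is in general only positive \emph{semi}definite on the transition annulus: for $A$ with flat boundary pieces (a box, a slab), $\hess\big(\operatorname{dist}(\cdot,A)^2\big)$ is rank one near those faces and stays so after convolution, so no uniform positive lower bound on the annulus is available. And the proposed fix of ``subtracting a small $\mc C^\infty$ correction supported strictly outside $A$'' to restore exact vanishing on $A$ is a negative perturbation, which directly threatens the convexity you are trying to preserve; you do not explain how to do it without losing $\hess\Theta\geq 0$. The paper's resolution is the missing idea: by compactness and convexity, one can sandwich $A\subset\bigcap_{i=1}^\ell\overline B(x_i,r_i)\subset \mathring A_{\varepsilon/2}$ for finitely many closed balls, and then take $\Theta(x)=\sum_i\theta(\|x-x_i\|/r_i)$ where $\theta\in\mc C^\infty(\mb R)$ vanishes on $(-\infty,1]$ and is chosen (explicitly, $\theta(t)=t^2e^{-1/(t-1)}$ for $t>1$) so that both $\theta'(t)/t$ and $(\theta'(t)/t)'$ are positive for $t>1$; this makes each radial term smooth, identically zero on its ball, and with positive-\emph{definite} Hessian outside it (the $\frac{\theta'(\|\cdot\|)}{\|\cdot\|}\,\Id$ part does the job). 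The sum then vanishes exactly on $A$, is globally convex, and has strictly positive Hessian on the whole complement of $\bigcap_i\overline B(x_i,r_i)$, which contains the transition region — exactly what is needed to choose the large constant $\alpha$ and close the argument. Without this (or an equivalent) construction, your proof is incomplete.
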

\begin{proof}
The proof consists in smoothly cutting $\vp$  outside $A$ and adding
a function $g$ vanishing on $A$ and sufficiently convex outside $A$. 
To easily construct such a function $g$ it is convenient to reduce to radial cut-off's as follows (see \cite{Yan}): first, since 
$A$ is convex and compact,  
$A_{\varepsilon}\setminus \mathring{A_{\frac\varepsilon2}}$
is compact and there exist $\ell\in\mathbb N$,  $(x_{i})_{i\in\{1,\dots,\ell\}}\subset (\mathbb R^N)^\ell$ and $(r_{i})_{i\in\{1,\dots,\ell\}}\subset (0, \infty)^\ell$ such that, 
denoting by $\overline B (x_{i},r_{i})$ the closed ball 
of radius $r_i$ centered at $x_i$, 
\begin{equation}\label{convexsets}
A\ \subset\ \cap_{i=1}^{\ell}\overline B (x_{i},r_{i})
\ \subset \ \mathring{A}_{\frac\varepsilon2}\ \subset\ A_{\varepsilon}\,     \   . 
\end{equation}   
We shall consider  $\theta\in C^{\infty}(\mb R)$ defined by
$$
\theta(t)  :=\left\{\begin{array}{lcr}
0\quad &\text{if}&\quad t\in(-\infty,1]\ \, \\
t^2e^{-\frac{1}{t-1}}\quad&\text{if}&
\quad  t\in(1,+\infty)\,.
\end{array}\right.
$$ 
This function 
is  strictly increasing on 
$(1,+\infty)$, as well as
$t\mapsto\frac{\theta'(t)}{t}$,
since we have
for any $t>1$,
$$
\theta'(t)\ =\  \left(2t+\frac{t^2}{(t-1)^2}\right)e^{-\frac{1}{t-1}}
\quad\text{and}\quad
\left(\frac{\theta'(t)}{t}\right)'\ =\ \frac{t^2-3t+3}{(t-1)^4}
e^{-\frac{1}{t-1}}\,.
$$
Moreover notice that $\Hess\big(x\mapsto \theta(\|x\|)\big)$ is given
by
$$
\Hess\big(x\mapsto \theta(\|x\|)\big)
\,=\, \frac{\theta'(\|x\|)}{\|x\|}\mathop{Id}\, +\, 
\frac{1}{\|x\|}\left(\frac{\theta'(t)}{t}\right)'_{t=\|x\|}\big(x_{i}x_{j}\big)_{1\leq i,j\leq N}\,,
$$
and so is positive definite for $\|x\|>1$ (and zero for $\|x\|\leq 1$).
We then define 
\[
g(x)   \   :=    \   \sum_{i=1}^{\ell} \theta 
\left( \frac{\|x-x_{i}\|}{r_{i}} \right)   \,.
\]
Note that $g$ is smooth, $\hess g \geq 0$ and that, according to~\eqref{convexsets}, $g\equiv 0$ on $A$ and $\hess g > 0 $ on the complementary of $\mathring{A}_{\frac\varepsilon2}$. Finally we define the following extension of $\vp|_{A}$,  
\[    \tilde \vp      \  :   =   \       \chi    \,   \vp      \  +   \    \alpha    \,  g          \      ,   \]
where  $\chi\in \mc C_{{\rm c}}^\infty(\mb R^N)$ satisfies
 $\chi \equiv1$ on $A_{\frac\varepsilon2}$ and $\supp \chi\subset A_{\varepsilon}$,
and  $\alpha>0$  is chosen large enough so that
$\hess \tilde \vp  \geq  C$. 
This is indeed possible since $\Hess \tilde \vp \geq \Hess \vp \geq C$ on $\mathring{A}_{\frac\varepsilon2}$,  $\Hess \tilde \vp 
 = \alpha\,\Hess g$ on $\mathbb R^N\setminus A_{\varepsilon}$,
 $\Hess \tilde \vp=\alpha\,\Hess g+\Hess (\chi\,f)$ on $A_{\varepsilon}\setminus
 \mathring{A}_{\frac\varepsilon2}$
 and $\min\{\Hess g(x)\,,\ x\in \mathbb R^N\setminus \mathring{A}_{\frac\varepsilon 2}\}>0$.  
\end{proof}

\noindent
As a corollary of Lemmata~\ref{le.Hess} and~\ref{lemmaextension}, 
the following spectral gap estimate for a suitably localized problem around the minimum $J_+$ holds true.
Note that the analogous version around the other minimum $J_-$ holds true
by symmetry. 
\begin{proposition}\label{BLlemma}
Let $r>0$ and $\vp\in \mc C_{{\rm c}}^\infty(\mb R^N)$ such that 
\begin{equation}\label{BLcond}    \supp \vp    \   \subset    \       
\Omega_r   :=      \big\{   \  x\in \mb R^N  \   :   \  \|Px     -    J_+\| \  \leq \  r \  , \
\|P^{\perp}x\| \  \leq    \  r    N^{-\frac 14} \  \big\}    \   .      
\end{equation}
If $r$ is suffiently small, then there exists a constant $\rho>0$
such that  for all $h>0$ and $ N\in \mb N$,   there exists $\mc E^+_{h}\in L^2(\mb R^N)$
such that 
\begin{equation}\label{BLlocal}       \lp \, \vp  ,   \,     \Delta_{f,h} \vp \, \rp_{L^2(\mb R^N)}   \   \geq    \   \rho   \,  h \
   \Big(    \    \|\vp\|^2_{L^2(\mb R^N)}     \  -    \   \lp \vp, \mc E^+_{h}  \rp^2_{L^2(\mb R^N)}  \  \Big) \ .   \end{equation}
\end{proposition}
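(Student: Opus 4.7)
The strategy is to reduce to a globally strongly convex setting via an extension of $f$, so that the Bakry--\'Emery criterion (Proposition~\ref{BakryEmery}) yields an $N$-uniform Poincar\'e inequality, and then to translate this back via the ground state transformation.

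By Lemma~\ref{le.Hess}, there exist constants $r_{0},\rho>0$ (both independent of $N$) such that $\Hess f \geq \rho$ on $\Omega_{r_{0}}$. Assume in the statement that $r<r'<r_{0}$ for some intermediate $r'$. The set $A:=\Omega_{r'}$ is compact and convex, and for $\ve:=(r_{0}-r')N^{-\frac14}/2$ we have $A_{\ve}\subset \Omega_{r_{0}}$, so that $\Hess f \geq \rho$ on $A_{\ve}$. Lemma~\ref{lemmaextension} then produces an extension $\tilde f = \tilde f_{h,N} \in \mc C^\infty(\mb R^N)$ of $f|_{\Omega_{r'}}$ satisfying $\Hess \tilde f \geq \rho$ globally on $\mb R^N$. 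Although the cutoff, the number of balls used in the covering, and hence $\tilde f$ itself depend on $N$, the convexity constant $\rho$ does not.

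Since $\supp \vp \subset \Omega_{r}$ lies in the interior of $\Omega_{r'}$ where $\tilde f \equiv f$, one has $\nabla f = \nabla \tilde f$ and $\Delta f = \Delta \tilde f$ pointwise on $\supp \vp$, and therefore $\lp \vp, \Delta_{f,h}\vp\rp = \lp \vp, \Delta_{\tilde f,h}\vp\rp$. Setting $v:=e^{\tilde f/h}\vp \in \mc C_{c}^\infty(\mb R^N)$, the standard ground state transformation identity $\Delta_{\tilde f,h} = \sum_{j}(h\partial_{j}+\partial_{j}\tilde f)^{*}(h\partial_{j}+\partial_{j}\tilde f)$ gives
\[
\lp \vp, \Delta_{\tilde f,h}\vp\rp_{L^2(\mb R^N)}\ =\ h^{2}\int_{\mb R^N}|\nabla v|^{2}\,e^{-\frac{2\tilde f}{h}}\,dx\,.
\]
As $\Hess(2\tilde f/h)\geq 2\rho/h$ on $\mb R^N$, Proposition~\ref{BakryEmery} applied to the probability measure $d\tilde\mu:=\tilde Z^{-1}e^{-2\tilde f/h}\,dx$ (finite because $\tilde f$ is globally strongly convex) yields
\[
\int_{\mb R^N} v^{2}\,d\tilde\mu\,-\,\Big(\int_{\mb R^N} v\,d\tilde\mu\Big)^{\!2}\ \leq\ \frac{h}{2\rho}\int_{\mb R^N}|\nabla v|^{2}\,d\tilde\mu\,.
\]
Noting that $v\,e^{-\tilde f/h}=\vp$ and that $\tilde f \equiv f$ on $\supp \vp$, and setting $\mc E^{+}_{h}:=\tilde Z^{-1/2}e^{-\tilde f/h}\in L^{2}(\mb R^N)$ (of unit $L^{2}$-norm), multiplication through by $\tilde Z$ and rearrangement produce exactly the announced inequality \eqref{BLlocal}, with the constant $2\rho$ in place of the $\rho$ in the statement.

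The main obstacle is that $\Omega_{r}$ has $N$-dependent geometry---a thin cylinder of thickness $O(N^{-1/4})$ in the $\mc C^{\perp}$ direction---which forces Lemma~\ref{lemmaextension} to be applied with $\ve$, and with an extension $\tilde f$, both depending on $N$. The crucial point, inherited from the $N$-uniformity of $\rho$ in Lemma~\ref{le.Hess}, is that this $N$-dependence affects only the auxiliary ingredients of the construction and leaves the convexity bound $\Hess\tilde f\geq\rho$ that drives Bakry--\'Emery intact.
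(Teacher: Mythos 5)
Your proof is correct and follows essentially the same route as the paper: uniform convexity on a slightly enlarged cylinder from Lemma~\ref{le.Hess}, a globally $\rho$-convex extension $\tilde f$ via Lemma~\ref{lemmaextension}, a spectral gap for the log-concave measure $e^{-2\tilde f/h}dx$, and the observation that the quadratic forms of $\Delta_{f,h}$ and $\Delta_{\tilde f,h}$ coincide on test functions supported where $\tilde f\equiv f$. Your treatment is in fact slightly more explicit than the paper's on two points the paper leaves implicit — the choice of the $N$-dependent $\ve$ in the extension lemma and the ground-state-transformation derivation of the Poincar\'e inequality from Bakry--\'Emery — and both check out.
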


\begin{proof}
According to Lemma~\ref{le.Hess}, by taking $r>0$ sufficiently small
there exists $\rho>0$ such that  
\[    \forall N\in \mb N  \  ,    \   \forall  x\in \Omega_{2r}    \  ,   \   \    \      \hess f(x) \     \geq\    \rho         \      \  . \] 
By Lemma~\ref{lemmaextension}, there exists for each $N\in \mb N$ 
a function $\tilde f\in \mc C^\infty(\mb R^N)$ such that $\tilde f|_{\Omega_r}  \equiv
f|_{\Omega_r}   $ and 
\begin{equation}\label{extendedfestimate}    \forall N\in \mb N  \  ,    \   \forall  x\in \mb R^N    \  ,   \   \    \      \hess \tilde f(x) \     \geq\    \rho         \      \  . 
\end{equation}
As in the proof of Lemma~\ref{lemmaconditionalpoincare}, by standard results for the spectral gap of strictly log-concave measures
(see for example \cite[Corollary 11.4]{Le}),
Property~\eqref{extendedfestimate} implies~\eqref{BLlocal},
with the differential operator $\Delta_{\tilde f,h}$ instead of $\Delta_{ f,h}$ and with 
\[   \mc E^+_{h}   (x)    \  :=    \      \frac{e^{-\frac{\tilde f(x)}{h}}}{\int_{\mb R^N}e^{-2\frac{\tilde f(x)}{h}} \  dx}   \ . \]
Noting that $  \lp \, \vp  ,   \,     \Delta_{\tilde f,h} \vp \, \rp_{L^2(\mb R^N)}  =
  \lp \, \vp  ,   \,     \Delta_{f,h} \vp \, \rp_{L^2(\mb R^N)}$
  for any smooth $\varphi$ with support in $\Omega_{r}$, which follows from 
$\tilde f|_{\Omega_r}  \equiv
f|_{\Omega_r} $,  finishes the proof. 

\end{proof}

\subsection{Proof of Theorem~\ref{th.CFKS}}\label{proofCFKS}

\noindent
We fix from the outset $C>0$ and $\alpha\in (0,\frac 34)$. As we already mentioned,
it is sufficient to prove~\eqref{goal}. For this we introduce as follows a quadratic partition of unity $\{\eta_k\}$ depending
on the given $\alpha$ and on a parameter $r>0$,
independent of $N$ and $h$, which will be
chosen sufficiently small so that the estimates required   for the proof 
hold true. 

\noindent
We start with $\theta := \theta_{r}\in \mathcal C_{{\rm c}}^{\infty}(\mb R;[0,1])$
such that $\theta(x) =\theta(-x)$, $\theta\equiv1$ in $[-r,r]$ and $\theta \equiv 0$
in $[2r,+\infty)$ and define $\kappa_{\min}, 
\kappa_0, \kappa_\infty:\mb R^N \ra [0,1]$ by setting
\begin{equation}\label{defkappamin}  \kappa_{\min}(x)   \   :=    \   \theta (\|Px - J_{+}\|)   \   +   \  
\theta (\|Px - J_{-}\|)          \   ,    
\end{equation}
\begin{equation}\label{defkappa0}
    \kappa_{0}  \    :
=    \   \big(  1- \kappa_{\min}^2 \big)^{\frac{1}{2}} \ \textbf{1}_{\{\|Px\|\leq 1\}}    
\    \   ,      \     \   
\kappa_{\infty}:=\left(1-\kappa_{\min}^2\right)^\frac12\textbf{1}_{\{\|Px\|\geq 1\}}    \  . 
 \end{equation}
Moreover we define for $p\in\{4,6\}$ the functions
$\chi_{0,p}, \chi_{\infty, p}:\mb R^N \ra [0,1]$  as
\begin{equation}  \label{defchi} \chi_{0, p}(x):=\theta(   h^{-\alpha/p} \|P^\perp x\|)        \   \    \   \text{and}
\    \   \  
  \chi_{\infty,p}     : =     \big( 1  -    \chi^2_{0,p}\big)^\frac 12     \     .      
 \end{equation}
Note that the $\chi$'s depend on $h$, while the $\kappa$'s do not. 
Note also that 
\begin{equation}  \label{partitionunity}\big(   \   \kappa_{\min}^2 \ +    \  \kappa_{0}^2 
\  \big)   
   \
\big(  \    \chi^2_{0,4}    \    +   \   \chi^2_{\infty,4}     \   \big)      \     +    
   \   \kappa_{\infty}^2   \  
\big(  \    \chi^2_{0,6}    \    +   \   \chi^2_{\infty,6}     \   \big)
  \    \equiv   \  1      \   .
\end{equation}
We shall consider in the sequel the partition of unity $\{\eta_1, \dots, \eta_6\}$, 
where $\eta^2_k$ is given by one of the six products $\kappa_j^2 \chi_{j'}^2$ 
appearing when multiplying out the left hand side of~\eqref{partitionunity}. 
Observe that for each $k\in\{1,\dots, 6\}$,
\[  \forall N\in \mb N  \, ,    \,   \forall x\in \mb R^N  \,  ,     \   \  \        h^2|\nabla \eta_k(x)|^2    \     \lesssim     \   h^{2-\frac \alpha 2}  \     . \]
Thus the IMS localization formula of Proposition~\ref{pr.IMS} implies
that for every $\psi\in \mc C_{{\rm c}}^\infty(\mb R^N)$ and $h$ sufficiently small, 
\begin{equation}
\label{eq.IMS2}
\lp \psi , \Delta_{f,h} \psi   \rp_{L^2(\mb R^N)}\ +\ 
h^{2-\frac \alpha 2} 
   \     \gtrsim     \  
\sum_{k=1}^{6}    \  \lp  \,  \eta_{k} \psi,  \,  \Delta_{f,h}   (  \eta_{k} \psi)   \,    \rp_{L^2(\mb R^N)}
 \;    .  
\end{equation}

\noindent
Here and in the sequel we shall use for short the notation $\gtrsim$
and $\lesssim$ to denote inequalities which hold true up to multiplication 
of (say) the right hand side by a positive constant which is independent of $h$ and $N$.

\

\noindent
In the rest of the proof, we fix a $\psi\in \mc C_{{\rm c}}^\infty(\mb R^N)$
and discuss separately the addends on the right hand side
of~\eqref{eq.IMS2}.   

\

\

\

\noindent
{\bf Analysis around the diagonal}\\[0.1cm]
\noindent
{\it a)  Analysis on $\supp \big(     
 \kappa_{\min}     \,  \chi_{0,4} \big)$: 
  }
According to the definitions given in~\eqref{defkappamin} and~\eqref{defchi}, we have
for every $h>0$ and $N\in \mb N$ such that  $N\leq Ch^{-\alpha}$, 
\begin{equation*}
 \supp    (   \, \psi \,  \kappa_{\min}     \,  \chi_{0,4}   \, )         \    \subset   \  
 \Omega_{+,r}    \    \cup    \    \Omega_{-,r}           \          ,      
\end{equation*}
\[  \text{ where }  \   \   \     \Omega_{\pm,r}     \    :=   \   \big\{   x\in \mb R^N     :    \|Px     \pm    J_+\| \  \leq \  2 r \  , \
\|P^{\perp}x\|  \leq    2 r C^{\frac 14}  N^{-\frac 14}  \  \big\}  \  .    \]
Then it follows from Proposition~\ref{BLlemma} (and its analogous version
around $J_-$) that, chosing $r$ sufficiently small,   for all $h>0$ and $ N\leq Ch^{-\alpha}$  there exist $\mc E^+_{h}, \mc E^-_{h}\in L^2(\mb R^N)$
such that, denoting for short $\vp:= \kappa_{\min}  \chi_{0,4}\,\psi  $, 
\begin{equation}      
 \lp \, \vp   ,   \,     \Delta_{f,h} \,\vp \, \rp_{L^2(\mb R^N)}     \gtrsim   \label{estimateIa} 
       h 
   \left(   \| \vp \|^2_{L^2(\mb R^N)}       -       \lp 
   \vp, \mc E^+_{h} \rp^2_{L^2(\mb R^N)}
      -       \lp \vp, \mc E^-_{h}  \rp^2_{L^2(\mb R^N)} \right)  .  
   \end{equation}
 
\

\noindent
{\it b) Analysis on $\supp \big(     
 \kappa_{0}^2   \,     \chi_{0,4}^2   +    
 \kappa_{\infty}^2     \,   \chi_{0,6}^2     \big)$: 
  }
Here we shall use that, in the sense of quadratic forms, 
\begin{equation} \label{arounddiagonal} \Delta_{f,h}    \    \geq      \    \Delta^{\mc C}_{f,h}  \     \geq    \   
|\nabla^{\mc C} f|^2     \  -     \     h  \,  \Delta^{\mc C}  f     \      .  
\end{equation}
The final estimate~\eqref{estimateIb} given below follows then by elementary inequalities
which we spell out for completeness. 
Note first that the  
 definitions~\eqref{defkappa0} and~\eqref{defchi}  imply  in particular that
for every $h>0$ and $N\in \mb N$, 
\begin{equation}\label{suppIb}  \supp \big(     
 \kappa_{0}^2   \,     \chi_{0,4}^2   +    
 \kappa_{\infty}^2     \,   \chi_{0,6}^2     \big)    \     \subset   \    \Omega_0  \    \cup    \   
   \Omega  \  , \end{equation}
    where $\Omega$ and $\Omega_0$ are defined as
\begin{equation*} 
  \Omega    \  :=   \    \big\{   x\in \mb R^N     :    \|Px     -    J_{\pm}\|  \geq    r \  , \
   \|Px    \|   \geq    r  \  ,   \  
\|P^{\perp}x\|  \leq     2r  h^{\frac \alpha 6}  \  \big\}  \  ,
\end{equation*}
and
\begin{equation*}    
\Omega_0    \  :=   \    \big\{   x\in \mb R^N     :    \|Px\| \leq   r \  , \
\|P^{\perp}x\|  \leq     2r  h^{\frac \alpha 6}  \  \big\}    \   . 
\end{equation*}
On $\Omega_0$ one can immediately give a lower bound 
for the right hand side in~\eqref{arounddiagonal}. Indeed, 
 chosing $r$ sufficiently small, we have from \eqref{eq.laplx0}
 for
 every $x\in \Omega_0$,  $h\in (0,1]$, and 
 $N\in \mb N$,
 \begin{equation}\label{omega0estimate}      |\nabla^{\mc C} f|^2   \    -   \       h  \,    \Delta^{\mc C}  f    \     \geq  \    
  -   \       h  \,    \Delta^{\mc C}  f    \  =   \   
    \frac h2 \,  ( 1   -   3\hat x_{0}^2-3 \|P^\perp x\|^2)\  \geq\    \frac h4    \   .   
    \end{equation}
To deal with $\Omega$, we develop the expression of $\nabla^{\mc C} f(x)$ given in \eqref{eq.nablax0},
\[ \nabla^{\mc C} f(x)    \   =   \  
 \frac 12\hat x^3_0   \   -    \    \frac 12 \hat x_0 \   + \   \frac32 \hat x_0 \|P^{\perp}x\|^2
 \   +   \    \frac 12 \sqrt N \sum_{k=1}^N (P^{\perp}x)_k^3     \  ,    \]
 from which we get, using that for all $h>0$ we have $\|P^{\perp}x\|  \leq     2r  h^{\frac \alpha 6}$,
\[    \forall N\leq C h^{-\alpha}   \   ,    \  \   \  |\sqrt N \sum_{k=1}^N (P^{\perp}x)_k^3|   \   \leq    \    8\sqrt C r^3      \  .      \]
Then, we obtain for sufficiently small $h$ the lower bound
\begin{equation}\label{estnabla}    |\nabla^{\mc C} f(x)|    \ 
 \geq    \    \left|\frac 12\hat x^3_0 \  -  \  \frac12\hat x_0 \  
 + \   \frac32 \hat x_0 \|P^{\perp}x\|^2\right|
 \ -
     \   4\sqrt C r^3   \     ,
\end{equation}
from where it follows, choosing $r$ sufficiently small, that
\begin{equation}\label{discussionpotential}\begin{cases}
| \nabla^{\mc C} f(x)|   \gtrsim  1     &    \text{  for }   x\in \Omega    \ \text{ s.t. }  \,    r   \leq  \|Px\|   \leq 1-r   \\
 | \nabla^{\mc C} f(x)|   \gtrsim  \|Px\|^3  &    \text{  for }  x\in \Omega   \ \text{ s.t. }   \, 1+r  \leq \|Px\|   \ .
\end{cases} 
\end{equation}
Combining~\eqref{discussionpotential} with the estimate 
$$\forall\,x\,\in\,\Omega\,,\  h \ |\Delta^{\mc C} f(x)|     \ =\ \frac{h}{2}\left|\,1-3\|Px\|^2-3\|P^\perp x\|^2\,\right|
\ 
\leq    \   \frac h2\max\{1\,,\,3\|Px\|^2 \} \, $$ 
valid  for $h$ sufficiently small, we
finally  get the existence of $h_0>0$ such that  
 \begin{equation}\label{omegaestimate}  \forall x\in \Omega \  ,  \forall h\in (0,h_0]  
 \  ,  \forall N\leq C h^{-\alpha}    \    ,  \    \   \  \      \       |\nabla^{\mc C} f|^2   \    -   \       h  \,    \Delta^{\mc C}  f    \     \gtrsim   \      1
    \   .   
    \end{equation}
Summing up this part, 
setting for short $\vp:=   (   \,    
 \kappa_{0}^2   \,     \chi_{0,4}^2   +    
 \kappa_{\infty}^2     \,   \chi_{0,6}^2  \, )^\frac12   \, \psi$, it 
 follows from~\eqref{arounddiagonal}--\eqref{omega0estimate} and~\eqref{omegaestimate} that 
 there exist $h_0>0$ 
and $r$ sufficiently small such that, for every $h\in (0,h_0]$ and 
every $N\in \mb N$
satisfying $N \leq C h^{-\alpha}$, 
\begin{equation}\label{estimateIb}
 \lp \,    
 \vp  ,   \,     \Delta_{f,h} \,\vp   \, \rp_{L^2(\mb R^N)}   \   \gtrsim   \ \|\vp\|^2_{L^2(\mathbb R^N)}   \ .
\end{equation}

\

\noindent
{\bf Analysis away from the diagonal}\\[0.1cm]
\noindent 
Here it is convenient to work with  $\Delta^{\mc C^{\perp}}_{f,h}$,
which is sufficient due to the inequality $\Delta_{f,h}       \geq     
\Delta^{\mc C^{\perp}}_{f,h}$. 

\

\noindent
{\it a) Analysis on $\supp \big(     
 (\kappa^2_{\min}+\kappa^2_{0} )    \  \chi_{\infty,4} \big):$ 
  }
Let for short $\vp :=     
  (\kappa^2_{\min}+\kappa^2_{0} )^\frac12  \chi_{\infty,4}  \  \psi$. 
Note that by the definitions~\eqref{defkappamin}, \eqref{defkappa0},
and \eqref{defchi}, we have
\begin{equation}\label{suppoffIIa} 
  \supp \vp   \  \subset     \   \{x\in \mb R^N: \|Px\| \leq 1+2r 
\   ,  \   \|P^{\perp}x\|\geq r h^{\frac \alpha 4}\}     \  .    
\end{equation}
It follows from the Poincar\'{e} inequality~\eqref{Poincare+CS}, 
the concentration estimate~\eqref{concentrationbounded} of Lemma~\ref{le.concent}
and~\eqref{suppoffIIa} that there exists a constant $\gamma>0$ such that
\begin{equation}\label{estimateoffa}  \lp \,  \vp ,   \,  \Delta_{f,h}   \vp   \, \rp_{L^2(\mb R^N)}   \    \geq    \    
h \   (\mu-1)    \     \|\vp\|^2_{L^2(\mb R^N)}     \   \Big(     1     -      \gamma e^{-  \frac{r^{2} h^{\frac \alpha 2}}{\gamma h}  }   \Big)         \ . 
\end{equation}
Since $0<\alpha<\frac34$, estimate~\eqref{estimateoffa}
implies that there exists $h_0>0$ 
such that
\begin{equation}\label{estimateIIa}
\forall h\in (0,h_0] \,,\ \forall  N\in \mb N    \    ,  
\   \   \    \lp \,  \vp ,   \,  \Delta_{f,h}   \vp   \, \rp_{L^2(\mb R^N)}   \      \geq    \   h \   \frac{(\mu-1)}{2}    \    \|\vp\|^2_{L^2(\mb R^N)}   \ .
\end{equation}

\

\noindent
{\it b) Analysis on $\supp \big (    \kappa_{\infty}\  \chi_{\infty,6} \big)     $:}
Let $\vp :=   \kappa_{\infty}\  \chi_{\infty,6}  \ \psi$. 
By the definitions~\eqref{defkappa0} and \eqref{defchi}, we have
\begin{equation}\label{suppoffa}   \supp \vp   \  \subset     \   \{x\in \mb R^N: 
\    \|P^{\perp}x\|\geq r h^{\frac \alpha 6}\}     \  .    
\end{equation}
As for the point a) above, we use the Poincar\'{e} inequality~\eqref{Poincare+CS} but
we can only use here the 
concentration estimate~\eqref{concentrationunbounded} of Lemma~\ref{le.concent} since $\supp\psi$ is arbitrary. 
This leads to the existence of  a constant $\gamma>0$ such that
\begin{equation}\label{estimateoffb}  \lp \,  \vp ,   \,  \Delta_{f,h}   \vp   \, \rp_{L^2(\mb R^N)}   \    \geq    \    
h  (\mu-1)    \     \|\vp\|^2_{L^2(\mb R^N)}     \   \Big(     1     -       e^{\gamma N -  \frac{r^{2} h^{\frac \alpha 3}}{\gamma h}  }   \Big)         \ . 
\end{equation}
Since $0<\alpha<\frac34$, estimate~\eqref{estimateoffb}
implies that there exists $h_0>0$
such that
\begin{equation}\label{estimateIIb}
\forall h\in (0,h_0] \,,\ \forall N\leq Ch^{-\alpha}     \,    ,  
\        {\lp \,  \vp ,   \,  \Delta_{f,h}   \vp   \, \rp}_{L^2(\mb R^N)}   \      \geq    \   h  \frac{(\mu-1)}{2}     \|\vp\|^2_{L^2(\mb R^N)}   .
\end{equation}

\

\noindent
{\bf End of the proof}\\[0.1cm] 
\noindent
Chosing the parameter $r>0$ of the partition of unity
$\{\eta_k\}$
sufficiently small and putting together \eqref{eq.IMS2}, \eqref{estimateIa}, \eqref{estimateIb}, \eqref{estimateIIa}, and \eqref{estimateIIb}, we obtain estimate~\eqref{goal} 
with $E_{\pm}:=  \mc E^{\pm}_{h} \,  \kappa_{\min}     \,  \chi_{0,4}
$.

\qed

\

\noindent
{\bf Acknowledgements}\\
 \noindent
This work was supported by the  NOSEVOL ANR 2011 BS01019 01
and
the European Research Council under 
the European Union's Seventh Framework Programme (FP/2007-2013) / ERC 
Grant Agreement number 614492. The authors' collaboration was initiated thanks to a research visit 
to Paris-Sud University of the first author, then founded by the Department of Mathematics
of the University of Rome La Sapienza, and completed during a ``D\'el\'egation INRIA'' of the second author at CERMICS.
\\[0.1cm] 
Both authors thank Bernard Helffer and Nils Berglund for various discussions, Anton Bovier for pointing to the problem, and Tony Leli\`evre
for offering excellent working conditions at CERMICS.

\end{document}